\documentclass[12pt]{article}
\textheight 235mm
\textwidth 165mm
\oddsidemargin=-2mm
\evensidemargin=-2mm
\topmargin=-10mm

\usepackage{amsmath}
\usepackage{amssymb}
\usepackage{latexsym}
\usepackage{graphicx}

\title
{Bernstein measures on convex polytopes}
\author
{
Tatsuya Tate\thanks{Research partially supported by 
JSPS Grant-in-Aid for Scientific Research (No. 18740089).}\\
Graduate School of Mathematics \\
Nagoya University \\
Furo-cho, Chikusa-ku, \\
Nagoya, 464--8602, Japan\\
Email: tate@math.nagoya-u.ac.jp}
\date{\empty}

\renewcommand{\phi}{\varphi}

\newcommand{\ispa}[1]{\langle \,#1 \,\rangle }

\newtheorem{theorem}{{\sc Theorem}}[section]

\newtheorem{lemma}[theorem]{{\sc Lemma}}
\newtheorem{proposition}[theorem]{{\sc Proposition}}

\newtheorem{definition}[theorem]{{\sc Definition}}
\newtheorem{example}[theorem]{{\sc Example}}
\newtheorem{remark}[theorem]{{\sc Remark}}

\newenvironment{proof}%
{\def\psymbol{{\sc Proof.}\enspace}
\psymbol}%
%{\def\qed{\rule[-2pt]{3pt}{9pt}}
{\def\qed{$\square$}
\hspace{3pt}\qed\par\bigskip}
%\hfill\qed\par\bigskip}

%    Absolute value notation

%    Blank box placeholder for figures (to avoid requiring any
%    particular graphics capabilities for printing this document).

\begin{document}

\maketitle

\renewcommand{\labelenumi}{{\rm (\arabic{enumi})}}

%    Current address
%\curraddr{Department of Mathematics and Statistics,
%Case Western Reserve University, Cleveland, Ohio 43403}
%\email{tate@math.nagoya-u.ac.jp}
%    \thanks will become a 1st page footnote.
%\thanks{Research partially supported by JSPS Grant-in-Aid for Scientific Research (No.~18740089).}

%\subjclass{Primary 41A60, 52B20; Secondary 53D05, 26B25}
%\date{\today}

%\dedicatory{Dedicated to professor Toshikazu Sunada on the occasion of his sixtieth birthday}

%\keywords{Bernstein polynomial, Bernstein measure, asymptotic expansion, polytope, toric variety}

\begin{abstract}
We define the notion of Bernstein measures and Bernstein approximations over 
general convex polytopes. This generalizes well-known Bernstein polynomials 
which are used to prove the Weierstrass approximation theorem on one dimensional intervals. 
We discuss some properties of Bernstein measures and approximations, 
and prove an asymptotic expansion of the Bernstein approximations for smooth functions 
which is a generalization of the asymptotic expansion of the Bernstein polynomials 
on the standard $m$-simplex obtained by Abel-Ivan and H\"{o}rmander. These are different from the 
Bergman-Bernstein approximations over Delzant polytopes recently introduced by Zelditch. 
We discuss relations between Bernstein approximations defined in this paper and 
Zelditch's Bergman-Bernstein approximations. 
\end{abstract}

\section{Introduction}
\label{intro}
It is quite well-known that Bernstein (\cite{B}) introduced the polynomial 
\begin{equation}
\label{original}
B_{N}(f)(x)=\sum_{k=0}^{N}{N \choose k}x^{k}(1-x)^{N-k}f(k/N),\quad 
f:[0,1] \to \mathbb{C}
\end{equation}
to prove the Weierstrass approximation theorem. 
After his work, several properties and generalizations of the Bernstein polynomials \eqref{original} are
considered. For example, the Bernstein polynomials 
on the standard $m$-dimensional simplex $P$ is defined by 
\begin{equation}
\label{simplex}
B_{N}(f)(x)=\sum_{\alpha \in \mathbb{Z}_{ \geq 0}^{m},\ 
\|\alpha\| \leq N}{N \choose \alpha}x^{\alpha}(1-\|x\|)^{N-\|\alpha\|}f(\alpha/N), 
\end{equation}
where $f$ is a function on $P$, 
$\alpha= (\alpha_{1},\ldots,\alpha_{m})\in \mathbb{Z}_{\geq 0}^{m}$ and
$\|\alpha\|=\sum_{j=1}^{m}\alpha_{j}$.  
The reader may be referred to the book \cite{Lo} in which one can find many interesting 
properties and some generalizations of the Bernstein polynomials. 
Some relations between Bernstein polynomials and Brownian motions are discussed in \cite{K}. 

Rather recently, Abel-Ivan (\cite{AI}) and 
H\"{o}rmander (\cite{Ho2}) obtained a complete asymptotic expansion of 
the Bernstein polynomials on the standard $m$-dimensional simplex defined by \eqref{simplex}. 
In particular, H\"{o}rmander used it to analyze asymptotic behavior of certain 
Bergman kernels near the boundary of the unit ball in a complex space. 

Further generalization was considered by Zelditch (\cite{Z}, see also \cite{Fe}), who defined the Bergman-Bernstein 
approximations on regular simple lattice ({\it i.e.}, Delzant) polytopes. 
He defined it by using Bergman-Szeg\"{o} kernels on a toric K\"{a}hler manifold 
whose moment polytope is a given Delzant polytope. He obtained an asymptotic 
expansion of the Bergman-Bernstein approximations by using the theory of 
Toeplitz operators and the harmonic analysis on toric K\"{a}hler manifolds developed in \cite{STZ1}, \cite{STZ2}, 
\cite{SoZ1}, and applied the Bergman-Bernstein approximations to 
obtain an asymptotic expansion of the Dedekind-Riemann sum 
\begin{equation}
\label{Riemann}
\sum_{\alpha \in NP\cap \mathbb{Z}^{m}}f(\alpha/N)
\end{equation}
over the Delzant polytope $P$ in $\mathbb{R}^{m}$. 
Note that an asymptotic expansion of the Dedekind-Riemann sum \eqref{Riemann}, 
for simple lattice polytopes, in a form similar to formulas of Euler-Maclaurin type 
was obtained by Guillemin-Sternberg \cite{GS}. 

In view of this, it would be interesting to consider some generalizations of the Bernstein 
polynomials for general convex polytopes and its applications to various directions. 
Our aim in this paper is to define in a natural way a generalization 
of the Bernstein polynomials on general convex polytopes and obtain its asymptotic expansion. 
We avoid to use harmonic analysis on toric varieties to define the Bernstein approximations 
on general polytopes, because the toric variety corresponding to a general polytope will have singularities. 
Instead, we use the setting-up and some analysis discussed in \cite{TZ}. 
Our main theorems are described in Section \ref{Bernstein} and proved in Section \ref{property}. 

Our Bernstein approximations defined in Section \ref{Bernstein} are different from Bergman-Bernstein 
approximations defined in \cite{Z}. In \cite{Z}, Zelditch used a general K\"{a}hler metric on a toric K\"{a}hler manifold. 
One may say that, when the polytope is Delzant, 
we use only the K\"{a}hler metric induced by the restriction of the Fubini-Study 
metric through a monomial embedding. 
But, our Bernstein approximations can be defined for general polytopes. 
Recently, Song-Zelditch (\cite{SoZ2}) shows a large deviations principle for 
the Bergman-Bernstein measures. In particular, they give a concrete description of the rate functions.
Our Bernstein approximations are defined in terms of dilated convolution powers of 
Bernstein measures, and hence they satisfy a large deviations principle. 
We give a concrete description of the rate functions for finitely supported Bernstein measures 
in Subsection \ref{LargeDP}, 
which eventually coincide with Song-Zelditch rate functions (\cite{SoZ2}) for Bergman-Bernstein measures 
defined from Fubini-Study metric on a projective toric manifold when the polytope is Delzant. 
We discuss in Section \ref{ToricGeometry} relations between our Bernstein approximations 
and Bergman-Bernstein approximations.

\vspace{20pt}

{\bf Acknowledgments.} The author would like to thank to professor Zelditch for his 
valuable comments on the topics discussed here, 
and professor Bando for his pointing out to the author on the facts given in Subsection \ref{generalS}.

\section{Bernstein measures on convex polytopes}
\label{Bernstein}

\subsection{Definitions and main theorems}
\label{mains}

Let $P \subset \mathbb{R}^{m}$ be a polytope which has non-empty interior, $P^{o}$. 
The barycenter,
\begin{equation}
\label{barycenter}
b(\mu)=\int_{\mathbb{R}^{m}}z\,d\mu(z), 
\end{equation}
of a given probability measure $\mu$ on $\mathbb{R}^{m}$ 
is contained in the convex hull of the support, ${\rm supp}(\mu)$, of $\mu$, 
and hence it defines a continuous surjection 
\begin{equation}
\label{barymap}
b:\mathcal{M}(P) \to P, 
\end{equation}
where $\mathcal{M}(P)$ denotes the set of probability measures on the polytope $P$ with 
the weak-$\ast$ topology. We denote $C(P)$ the space of all continuous functions on $P$. 

\begin{definition}
\label{Bmeasure}
A continuous section $\mathcal{B}:P \to \mathcal{M}(P)$ of the barycenter map \eqref{barymap} 
satisfying the following two conditions is called the {\it Bernstein measure} on the polytope $P$: 
\begin{enumerate}
\item For any $f \in C(P)$, the function $B(f)$, defined by 
\begin{equation}
\label{integralB}
B(f)(x):=\int_{P}f(z)\,d\mathcal{B}_{x}(z),\quad d\mathcal{B}_{x}:=\mathcal{B}(x) \in \mathcal{M}(P), 
\quad x \in P, 
\end{equation}
is in $C^{\infty}(P^{o}) \cap C(P)$.

\item There exists a smooth function $K:P^{o} \to {\rm Sym}(m,\mathbb{R})$ such that 
for any $f \in C(P)$, we have 
\[
\nabla B(f)(x)=\int_{P} f(z) K(x)(z-x)\,d\mathcal{B}_{x}(z), 
\quad x \in P^{o},  
\]
where ${\rm Sym}(m,\mathbb{R})$ denotes the space of all symmetric $m \times m$ real matrices. 
We call the function $K :P^{o} \to {\rm Sym}(m,\mathbb{R})$ the {\it defining matrix} of the Bernstein measure $\mathcal{B}$. 
\end{enumerate}
\end{definition}

\begin{definition}
\label{Bapprox}
Let $\mathcal{B}:P \to \mathcal{M}(P)$ be a Bernstein measure on $P$. 
For any $f \in C(P)$ and any positive integer $N$, we define the function $B_{N}(f)$ on $P$ by 
\begin{equation}
\label{BNf}
B_{N}(f)(x):=\int_{P}f(z)\,d\mathcal{B}_{x}^{N}(z), 
\end{equation}
where $d\mathcal{B}_{x}^{N}$ is the probability measure on $P$ given by 
\begin{equation}
\label{convolution}
d\mathcal{B}_{x}^{N}=(D_{1/N})_{*}(d\mathcal{B}_{x} \ast \cdots \ast d\mathcal{B}_{x}), 
\quad D_{1/N}:\mathbb{R}^{m} \ni x \mapsto x/N \in \mathbb{R}^{m}. 
\end{equation}
We call the function $B_{N}(f)$ the {\it Bernstein approximation} of $f$. 
\end{definition}

Concrete expression for the dilated convolution measure $d\mathcal{B}_{x}^{N}$ of $\mathcal{B}(x)$ 
defined in \eqref{convolution} is given by 
\begin{equation}
\label{remark4}
\int_{P}f(z)\,d\mathcal{B}_{x}^{N}(z)=
\int_{P \times \cdots \times P}
f((z^{(1)}+\cdots+ z^{(N)})/N)\,d\mathcal{B}_{x}(z^{(1)})\cdots d\mathcal{B}_{x}(z^{(N)}),  
\end{equation}
where the integral domain $P \times \cdots \times P$ is the $N$ times product of the polytope $P$.

To our knowledge, generalizations of the classical Bernstein polynomials 
previously considered are defined by using finitely supported probability measures. This is true also for 
the Bergman-Bernstein approximation in \cite{Z}. We do not assume here that 
the measure $d\mathcal{B}_{x}$ to have finite support. 
However, a Bernstein measure 
having a finite support in the following sense might be important. 

\begin{definition}
\label{defSupport}
Let $\mathcal{B}:P \to \mathcal{M}(P)$ be a Bernstein measure. 
Then, $\mathcal{B}$ is called a {\it finitely supported} Bernstein measure if there exists a finite set $S$ in $P$ such that 
${\rm supp}(B(x))=S$ for any $x \in P^{o}$ and ${\rm supp}(B(x))\subset S$ for any $x \in P$. 
The finite set $S$ is called the {\it support} of $\mathcal{B}$. 
\end{definition}

Now, let $\mathcal{B}:P \to \mathcal{M}(P)$ be a finitely supported Bernstein measure with support $S \subset P$. 
Then, it is easy to show that, for each $\alpha \in S$, 
there is a function $m_{\alpha} \in C^{\infty}(P^{o}) \cap C(P)$ such that 
$d\mathcal{B}_{x}=\sum_{\alpha \in S}m_{\alpha}(x)\delta_{\alpha}$ for each $x \in P$. 
The functions $m_{\alpha}$, $\alpha \in S$ satisfy the following: 
\begin{enumerate}
\item $m_{\alpha}$ is positive on $P^{o}$, and we have $\sum_{\alpha \in S}m_{\alpha}(x)=1$ for all $x \in P$; 
\item for each $x \in P$, we have $\sum_{\alpha \in S}m_{\alpha}(x)\alpha=x$, which 
implies that the convex hull of $S$ is $P$;  
\item we have $\nabla m_{\alpha}(x)=m_{\alpha}(x)K(x)(\alpha -x)$ for each $x \in P^{o}$. 
\end{enumerate}
Conversely, a finite set $S \subset P$ whose convex hull is $P$ and 
functions $m_{\alpha} \in  C^{\infty}(P^{o}) \cap C(P)$ for each $\alpha \in S$ satisfying the above three 
conditions define a Bernstein measure $\mathcal{B}(x)=\sum_{\alpha \in S}m_{\alpha}(x)\delta_{\alpha}$.

The following lemma gives a concrete expression for the dilated convolution powers $d\mathcal{B}_{x}^{N}$ 
for a given finitely supported Bernstein measure $d\mathcal{B}_{x}$.

\begin{lemma}
\label{concFBM}
Let $\mathcal{B}:P \to \mathcal{M}(P)$ be a finitely supported Bernstein measure with the support $S$, 
and write it as $\mathcal{B}(x)=\sum_{\alpha \in S}m_{\alpha}(x)\delta_{\alpha}$. 
Then, the probability measure $d\mathcal{B}_{x}^{N}$ is written as 
\begin{equation}
\label{DCM}
d\mathcal{B}_{x}^{N}=\sum_{\gamma \in S_{N}}m_{N}^{\gamma}(x)\delta_{\gamma/N},
\end{equation}
where $S_{N} \subset NP$ is the finite set defined by 
\begin{equation}
\label{achieval}
S_{N}=\{\gamma \in NP\,;\,\gamma =\beta_{1}+\cdots +\beta_{N}\ 
\mbox{for some}\ \beta_{1},\ldots,\beta_{N} \in S\}, 
\end{equation}
and the function $m_{N}^{\gamma} \in C(P) \cap C^{\infty}(P^{o})$ is defined by 
\begin{equation}
\label{coeff}
m_{N}^{\gamma}(x)=\sum_{
\stackrel{\beta_{1},\ldots,\beta_{N} \in S}{
\beta_{1}+\cdots+\beta_{N} =\gamma}}
m_{\beta_{1}}(x) \cdots m_{\beta_{N}}(x). 
\end{equation}
\end{lemma}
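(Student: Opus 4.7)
The plan is to unwind the definition of the dilated convolution power $d\mathcal{B}_x^N$ via the explicit formula \eqref{remark4}, substitute the finite-support expression $d\mathcal{B}_x = \sum_{\alpha \in S} m_\alpha(x)\delta_\alpha$, and then regroup the resulting finite sum by the value of the combined index $\gamma = \beta_1 + \cdots + \beta_N$. Since both sides of the claimed identity \eqref{DCM} are finite signed combinations of point masses, it suffices to check the equality after integration against an arbitrary $f \in C(P)$.

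First, I would fix $f \in C(P)$ and $x \in P$ and compute
\[
\int_P f(z)\,d\mathcal{B}_x^N(z)
= \int_{P^N} f\!\left(\frac{z^{(1)}+\cdots+z^{(N)}}{N}\right) d\mathcal{B}_x(z^{(1)})\cdots d\mathcal{B}_x(z^{(N)}),
\]
which is exactly \eqref{remark4}. Substituting $d\mathcal{B}_x = \sum_{\alpha \in S} m_\alpha(x)\delta_\alpha$ into each factor turns the integral into the finite sum
\[
\sum_{(\beta_1,\ldots,\beta_N)\in S^N} m_{\beta_1}(x)\cdots m_{\beta_N}(x)\, f\!\left(\frac{\beta_1+\cdots+\beta_N}{N}\right).
\]
Second, I would collect the tuples $(\beta_1,\ldots,\beta_N)\in S^N$ according to their sum $\gamma = \beta_1+\cdots+\beta_N$. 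Since $S \subset P$ and $P$ is convex, $\gamma/N \in P$, and more precisely $\gamma \in NP$, so $\gamma$ runs through the set $S_N$ of \eqref{achieval}. The inner sum over $\{(\beta_1,\ldots,\beta_N) : \beta_1+\cdots+\beta_N = \gamma\}$ is precisely the definition \eqref{coeff} of $m_N^\gamma(x)$, and this reorganization gives
\[
\int_P f(z)\,d\mathcal{B}_x^N(z) = \sum_{\gamma \in S_N} m_N^\gamma(x)\, f(\gamma/N) = \int_P f(z)\,d\!\left(\sum_{\gamma \in S_N} m_N^\gamma(x)\delta_{\gamma/N}\right)(z).
\]
Since this holds for every $f \in C(P)$, the two measures coincide, proving \eqref{DCM}.

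Finally, I would verify the regularity claim $m_N^\gamma \in C(P) \cap C^\infty(P^o)$: this is immediate from \eqref{coeff}, because $m_N^\gamma$ is a finite sum of $N$-fold products of functions in $C(P) \cap C^\infty(P^o)$, a class closed under pointwise multiplication and finite addition. There is really no serious obstacle here; the statement is essentially combinatorial bookkeeping for the $N$-fold convolution of atomic measures, and the only point requiring a brief remark is checking that $\gamma \in NP$ for every tuple, which follows from convexity of $P$ together with $S \subset P$.
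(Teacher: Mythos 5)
Your argument is correct and is essentially the paper's own proof: both expand $B_N(f)(x)$ via \eqref{remark4} with $d\mathcal{B}_x=\sum_{\alpha\in S}m_\alpha(x)\delta_\alpha$ and regroup the resulting finite sum by the value of $\gamma=\beta_1+\cdots+\beta_N$, which yields \eqref{DCM}--\eqref{coeff}. Your added remarks on testing against arbitrary $f\in C(P)$, on $\gamma\in NP$ by convexity, and on the regularity of $m_N^\gamma$ are harmless elaborations of points the paper leaves implicit.
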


\begin{proof}
By \eqref{remark4}, the Bernstein approximation $B_{N}(f)$ is given by 
\[
\begin{split}
B_{N}(f)(x)&=\sum_{\beta_{1},\ldots,\beta_{N} \in S}
f((\beta_{1}+\cdots +\beta_{N})/N)m_{\beta_{1}}(x) \cdots m_{\beta_{N}}(x) \\
&=\sum_{\gamma \in S_{N}}
\sum_{
\stackrel{\beta_{1},\ldots, \beta_{N}}{\beta_{1}+\cdots+\beta_{N}=\gamma}}
f(\gamma/N)m_{\beta_{1}}(x) \cdots m_{\beta_{N}}(x),
\end{split}
\]
which shows the equations \eqref{DCM}, \eqref{coeff}. 
\end{proof}

We mention some remarks 
on the definition of Bernstein measures and Bernstein approximations. 

\begin{remark}
\label{remark1}
{\rm It is easy to show that the Bernstein approximation $B_{N}(f)$ for $f \in C(P)$ with 
respect to a Bernstein measure $\mathcal{B}:P \to \mathcal{M}(P)$ is in $C^{\infty}(P^{o}) \cap C(P)$. 
The condition (2) in Definition \ref{Bmeasure} comes from H\"{o}rmander's proof (\cite{Ho2}) of 
an asymptotic expansion of the Bernstein polynomials \eqref{simplex} on an $m$-dimensional simplex. 
In fact, he uses a differential-recurrence formula for a family of functions 
(which is, in our setting-up, the function $I_{N,\alpha}$ defined in \eqref{functI}). 
In his proof of the recurrence formula, he uses the matrix `$A(x)$' given in Example \ref{example1} below. 
The recurrence formula for the functions $I_{N,\alpha}$ defined in \eqref{functI} is given in Lemma \ref{recursion}. 
The condition (2) in Definition \ref{Bmeasure} assures that the functions $I_{N,\alpha}$ are 
polynomials in $N$ (Lemma \ref{polyn}), and it provides a computable representation for each differential operator in 
the asymptotic expansion \eqref{expansion} in Theorem \ref{AsympExp}. 

In general, for a given section $\mathcal{B}:P \to \mathcal{M}(P)$ of the 
barycenter map \eqref{barymap}, define the probability measure $d\mathcal{B}_{x}^{N}$ 
by \eqref{convolution}. Then by the Law of Large Numbers, $d\mathcal{B}_{x}^{N}$ tends 
weakly to the Dirac measure $\delta_{x}$ at $x \in P$. Furthermore, this convergence is uniform 
on $P$. See Subsection \ref{generalS}. We also note that, since the measures $d\mathcal{B}_{x}^{N}$ 
is defined as a dilated convolution powers, it satisfies, for example, the Central Limit Theorem 
(with a suitable dilation) and a large deviations principle at least for fixed $x$. 
In particular, the rate functions for the large deviations principle for the measure $d\mathcal{B}_{x}^{N}$ are 
given in Subsection \ref{LargeDP}, Proposition \ref{LDP}. 

The measure $d\mathcal{B}_{x}^{N}$ is also a section of the barycenter map,  
that is the barycenter of $d\mathcal{B}_{x}^{N}$ is $x$. 
But, it might not be necessary to define $d\mathcal{B}_{x}^{N}$ as a dilated convolution power. 
One might need only the properties that the measure $d\mathcal{B}_{x}^{N}$ has some regularity in $x \in P^{o}$ 
and its barycenter suitably converges to $x$. 
In fact, the measure defining the Bergman-Bernstein approximation in \cite{Z} 
is not a section of the barycenter map, but it has these properties. 
However, as in Section \ref{property}, 
that the measure $d\mathcal{B}_{x}^{N}$ is a dilated convolution power makes analysis much easier 
than something which satisfies only the above properties. 
This is one of the main differences between the Bernstein approximation 
in this paper and the Bergman-Bernstein approximation. See Section \ref{ToricGeometry} for details. }
\end{remark}

Next, we give some examples of Bernstein measures. 

\begin{example}
\label{example1}
Let $P$ be an $m$-dimensional standard simplex, 
\[
P=\{x=(x_{1},\ldots,x_{m}) \in \mathbb{R}^{m}\,;\, 
x_{j} \geq 0,\ \|x\|:=\sum_{j=1}^{m}x_{j} \leq 1\}. 
\]
Let $S=\{e_{0}:=0,e_{1},\ldots,e_{m}\}=P \cap \mathbb{Z}^{m}$ 
where $\{e_{j}\}_{j=1}^{m}$ is the standard basis of $\mathbb{R}^{m}$. 
Define the functions $m_{e_{j}} \in C^{\infty}(P)$ $(j=0,1,\ldots,m)$ by 
\[
m_{e_{0}}(x)=1-\|x\|,\ m_{e_{j}}(x)=x_{j},\ j=1,\ldots,m. 
\]
Then, it is easy to see that the measure $\mathcal{B}(x)=\sum_{j=0}^{m}m_{e_{j}}(x)\delta_{e_{j}}$ 
defines a Bernstein measure with the Bernstein approximation given by \eqref{simplex}. 
The defining matrix $K(x)$, $x \in P^{o}$, is given by (\cite{TZ})
\[
K(x)=
\left(
\frac{\delta_{ij}}{x_{j}}+\frac{1}{1-\|x\|}
\right)_{ij}, 
\]
which is the inverse of the matrix
\[
A(x)=
\left(
x_{j}\delta_{ij}-x_{i}x_{j}
\right)_{ij}. 
\]
\end{example}

\begin{example}
\label{example2}
If $P$ and $Q$ are polytopes in $\mathbb{R}^{m}$ with $P^{o} \neq \emptyset$, $Q^{o} \neq \emptyset$, 
then, their product $P \times Q$ is a polytope in $\mathbb{R}^{2m}$. 
Let $d\mathcal{B}^{P}_{x}$ and $d\mathcal{B}^{Q}_{y}$ be Bernstein measures on $P$ and $Q$, respectively. 
Then, the product measure $d\mathcal{B}_{(x,y)}:=d\mathcal{B}^{P}_{x}d\mathcal{B}_{y}^{Q}$ is a Bernstein 
measure on $P \times Q$. For example, let $\mathcal{B}^{1}:[0,1] \to \mathcal{M}([0,1])$ be the Bernstein measure 
given in the previous example (with $m=1$). Then the Bernstein approximation associated to the Bernstein measure 
\[
d\mathcal{B}_{(x_{1},\ldots,x_{m})}:=d\mathcal{B}^{1}_{x_{1}}\cdots d\mathcal{B}^{1}_{x_{m}}, 
\quad (x_{1},\ldots,x_{m}) \in [0,1]^{m}
\]
on $[0,1]^{m}$ is given by 
\[
\begin{split}
B_{N}(f)&(x_{1},\ldots,x_{m}) \\
=&
\sum_{k_{1},\ldots,k_{m}=0}^{N}f(k_{1}/N,\ldots,k_{m}/N)\prod_{i=1}^{m}
{N \choose k_{i}}x_{i}^{k_{i}}(1-x_{i})^{N-k_{i}}. 
\end{split}
\]
This is a well-known generalization of the original Bernstein measure (\cite{Lo}). 
\end{example}

There is an example of a Bernstein measure which has a smooth density. 
We give such an example only on the unit interval $[0,1]$ as a proposition. 
We define a function $\mu=\mu (\tau)$ on $\mathbb{R}$ by 
\begin{equation}
\label{Todd}
\mu(\tau)=\frac{{\rm Todd}(\tau)-1}{\tau}=\frac{1}{1-e^{-\tau}}-\frac{1}{\tau}. 
\end{equation}
It is easy to show that the function $\mu$ in \eqref{Todd} defines a diffeomorphism 
between $\mathbb{R}$ and $(0,1)$. Let $\tau=\tau(x):(0,1) \to \mathbb{R}$ be 
the inverse function of $\mu$. 
Define a smooth function $\delta(x)$ on $(0,1)$ by 
\begin{equation}
\label{Todddelta}
\delta(x):=\log \chi(\tau(x)) -x\tau(x), \quad 
\chi(\tau)=(e^{\tau}-1)/\tau. 
\end{equation}
Finally, we define a smooth function $\rho (z,x)$ on $[0,1] \times (0,1)$ by 
\begin{equation}
\label{Bsmooth1}
\rho(z,x)=e^{-\delta(x)+(z-x)\tau(x)}=\frac{e^{z\tau(x)}}{\chi(\tau(x))}=\frac{e^{z\tau(x)}\tau(x)}{e^{\tau(x)}-1}. 
\end{equation}

\begin{proposition}
\label{smoothTodd}
For any $x \in [0,1]$, define the measure $d\mathcal{B}_{x}$ by 
\begin{equation}
\label{Bsmooth2}
d\mathcal{B}_{x}(z):=\rho(z,x)\,dz\ (x \in (0,1)),\quad 
d\mathcal{B}_{1}(z):=\delta_{1},\quad d\mathcal{B}_{0}(z):=\delta_{0}. 
\end{equation}
Then the map $\mathcal{B}:[0,1] \to \mathcal{M}([0,1])$ defined by $\mathcal{B}(x)=d\mathcal{B}_{x}$ 
is a Bernstein measure on $[0,1]$. 
\end{proposition}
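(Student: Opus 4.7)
The plan is to unpack the definition of Bernstein measure into its three constituent requirements for the candidate $\mathcal{B}:[0,1]\to\mathcal{M}([0,1])$ and verify each one. The whole argument rests on the Legendre-type duality implicit in the construction: $\tau$ is the inverse of $\mu=(\log\chi)'$, and $\delta$ is (up to sign) the Legendre dual of $\log\chi$, which forces $\delta'(x)=-\tau(x)$. This single identity drives both the barycenter statement and the existence of the defining matrix.

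First I would check, for $x\in(0,1)$, that $\rho(z,x)=\tau(x)e^{z\tau(x)}/(e^{\tau(x)}-1)$ is nonnegative (the numerator and denominator have the same sign) and that $\int_0^1\rho(z,x)\,dz=1$ since $\int_0^1 e^{z\tau}\,dz=\chi(\tau)$. The barycenter identity $\int_0^1 z\,\rho(z,x)\,dz=x$ would come from differentiating under the integral: $\int_0^1 ze^{z\tau}\,dz=\chi'(\tau)$, so the barycenter equals $\chi'(\tau(x))/\chi(\tau(x))=(\log\chi)'(\tau(x))=\mu(\tau(x))=x$ by definition of $\tau$ as the inverse of $\mu$. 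This shows $\mathcal{B}$ is a section of the barycenter map on the interior.

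Next I would establish continuity of $\mathcal{B}:[0,1]\to\mathcal{M}([0,1])$ at the endpoints (continuity on the interior is immediate from the smooth dependence of $\rho$ on $x$ and dominated convergence). Since $\mu(\tau)\to 0$ as $\tau\to-\infty$ and $\mu(\tau)\to 1$ as $\tau\to+\infty$, one has $\tau(x)\to-\infty$ as $x\to 0^+$ and $\tau(x)\to+\infty$ as $x\to 1^-$. For any $f\in C([0,1])$ and $\tau\to-\infty$, the substitution $u=-z\tau$ converts $\int_0^1 f(z)\rho(z,x)\,dz$ into $\int_0^{-\tau} f(-u/\tau)\,\frac{-\tau}{e^\tau-1}e^{-u}\cdot(-1/\tau)\,du$, and a direct dominated-convergence argument yields $f(0)$; the symmetric computation (substituting $u=(1-z)\tau$) handles $\tau\to+\infty$. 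This gives the required weak convergence to $\delta_0$ and $\delta_1$, proving condition (1) of Definition \ref{Bmeasure} together with the smoothness of $B(f)$ on $(0,1)$.

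Finally, for condition (2) I would compute $\partial_x\rho$ directly from the exponential form $\rho(z,x)=\exp(-\delta(x)+(z-x)\tau(x))$, obtaining
\[
\partial_x\rho(z,x)=\bigl(-\delta'(x)-\tau(x)+(z-x)\tau'(x)\bigr)\rho(z,x).
\]
Differentiating $\delta(x)=\log\chi(\tau(x))-x\tau(x)$ and using $(\log\chi)'(\tau(x))=\mu(\tau(x))=x$ gives $\delta'(x)=-\tau(x)$, so the first two terms cancel and $\partial_x\rho=\tau'(x)(z-x)\rho$. Differentiating under the integral in $B(f)$ then yields
\[
\nabla B(f)(x)=\tau'(x)\int_0^1 f(z)(z-x)\,d\mathcal{B}_x(z),
\]
so the scalar defining matrix is $K(x)=\tau'(x)=1/\mu'(\tau(x))$, which is smooth and strictly positive on $(0,1)$ because $\mu$ is a diffeomorphism. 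The main (minor) obstacle is the endpoint continuity via weak convergence to $\delta_0,\delta_1$; everything else reduces to the single algebraic identity $\delta'(x)=-\tau(x)$.
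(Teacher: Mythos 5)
Your proof is correct, but it takes a genuinely different route from the paper's. The paper's proof is organized as a derivation-plus-uniqueness argument: it posits a density of the form \eqref{smoothm}, translates Definition \ref{Bmeasure} into the three equations \eqref{condB}, solves them to force $\mu$ to be \eqref{Todd} and $\rho$ to be \eqref{Bsmooth1}, and then only sketches the verification (``one can easily check''); the endpoint continuity is obtained by an integration by parts valid for $f\in C^{1}([0,1])$, followed by the contraction bound $\sup_{0<x<1}|B(f)(x)-B(g)(x)|\le\|f-g\|_{C([0,1])}$ and density of $C^{1}$ in $C([0,1])$. You instead verify \eqref{condB} directly: normalization from $\int_{0}^{1}e^{z\tau}\,dz=\chi(\tau)$, the barycenter identity from $(\log\chi)'=\mu$ and $\mu(\tau(x))=x$, and condition (2) of Definition \ref{Bmeasure} from $\delta'(x)=-\tau(x)$, giving $K(x)=\tau'(x)>0$; and you handle the endpoints by the rescalings $u=-z\tau$ (resp.\ $u=(1-z)\tau$) plus dominated convergence, which gives the weak convergence $d\mathcal{B}_{x}\to\delta_{0},\delta_{1}$ for every $f\in C([0,1])$ at once, with no density step. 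What each approach buys: the paper's derivation simultaneously establishes the uniqueness assertion made at the end of its proof (a Bernstein measure on $[0,1]$ with smooth density is unique), which your verification does not address, but which the proposition as stated does not require; your endpoint argument is more self-contained and exhibits the explicit exponential limit profile rather than relying on $C^{1}$ regularity and operator-norm continuity. One small slip to fix: after the substitution the prefactor should be $e^{-u}/(1-e^{\tau})$, which tends to $1$ as $\tau\to-\infty$, not $e^{-u}/(e^{\tau}-1)$; as displayed your expression would yield $-f(0)$. This is a sign bookkeeping error only and does not affect the validity of the argument.
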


\begin{proof}
Let us find a Bernstein measure $d\mathcal{B}_{x}$ on the unit interval $[0,1]$ which has the form 
\begin{equation}
\label{smoothm}
d\mathcal{B}_{x}(z)=\rho(z,x)\,dz,\quad z \in [0,1],\quad x \in (0,1), 
\end{equation}
where $\rho (z,x)$ is a smooth on $[0,1] \times (0,1)$. If the measure of the form \eqref{smoothm} 
with a smooth function $\rho$ is a Bernstein measure, then the density function $\rho$ must satisfy the following: 
\begin{equation}
\label{condB}
\int_{0}^{1}\rho (z,x)\,dz=1,\quad 
\int_{0}^{1}z\rho(z,x)\,dz=x,\quad 
\partial_{x}\log \rho (z,x)=K(x)(z-x), 
\end{equation}
where $K(x)$ is a positive smooth function on $(0,1)$ (the defining matrix of $\mathcal{B}$) 
which we need to specify. 
We take a potential function $\phi$ of $K$ on $(0,1)$, that is $\phi''=K$.
Inserting $K=\phi''$ for the third equation in \eqref{condB} and solving it as a differential equation for $\rho$, 
we have
\begin{equation}
\label{condB3}
\rho(z,x)=C(z)e^{\phi(x)+\phi'(x)(z-x)}
\end{equation}
with a positive function $C(z)$. 
For simplicity, we assume that $C(z)$ is constant, $C(z) \equiv C$. 
Substituting this for the first and second equation in \eqref{condB}, we have 
\[
\phi'=Ce^{-x\phi'+\phi}(e^{\phi'}-1),\quad 
x(\phi')^{2}=Ce^{-x\phi'+\phi}(\phi'e^{\phi'}-e^{\phi'}+1). 
\]
Eliminating the term $Ce^{-x\phi'+\phi}$ from the above, we get 
\begin{equation}
\label{condB4}
x\tau(e^{\tau}-1)=\tau e^{\tau}-e^{\tau}+1,\quad \tau(x)=\phi'(x).  
\end{equation}
Note that the defining matrix (function) $K$ must be positive on $(0,1)$ (see Lemma \ref{InverseMatrix}), 
and hence $x \mapsto \tau(x)$ is a diffeomorphism from $(0,1)$ onto its image. 
Denote its inverse function $\mu=\mu(\tau)$. 
From \eqref{condB4}, the function $\mu(\tau)$ must be given by \eqref{Todd}. 
The defining function $K(x)$ must be given by $K(x)=1/\mu'(\tau(x))=\tau'(x)$, $x \in (0,1)$. 
Therefore, the defining function $K(x)$ and its 
integral $\tau(x)$ are determined uniquely by the equation \eqref{condB}. 
Clearly $\mu(\tau)=(\log \chi (\tau))'$ where the function $\chi$ is defined in \eqref{Todddelta}. 
Then, we have $\delta'(x)=-\tau(x)$ and hence we can take $\phi=-\delta$. 
From this choice, we have $C=1$ and hence the function $\rho (z,x)$ must be the form \eqref{Bsmooth1}. 
One can easily check that, this function $\rho$ actually satisfies the equations \eqref{condB}. 
Note that $A(x):=K(x)^{-1}$ is continuously extended to $[0,1]$ with $A(0)=A(1)=0$. 
But, the function $\rho(z,x)$ can not be extended to $x=0,1$ continuously. 
However, for $x \in [0,1]$, define a probability measure $d\mathcal{B}_{x}$ on $[0,1]$ by \eqref{Bsmooth2}. 
For any $f \in C([0,1])$ and $x \in [0,1]$, we set $B(f)(x)=\int_{0}^{1}f(z)\,d\mathcal{B}_{x}(z)$ which is smooth on $(0,1)$. 
If $f \in C^{1}([0,1])$, an integration by parts shows that $B(f)(x) \to f(1)$ when $x \to 1$ and $B(f)(x) \to f(0)$ when $x \to 0$. 
Since $\sup_{0 < x <1}|B(f)(x)-B(g)(x)| \leq \|f-g\|_{C([0,1])}$, $f,g \in C([0,1])$,
$B(f)$ is continuous on $[0,1]$ for any $f \in C([0,1])$. 
Hence $d\mathcal{B}_{x}$ is a Bernstein measure on $[0,1]$. 
By the above discussion, one knows that 
a Bernstein measure $d\mathcal{B}_{x}$ on $[0,1]$ having a smooth density is uniquely determined, 
and is given by \eqref{Bsmooth2}, \eqref{Bsmooth1}. 
\end{proof}

One of our main theorems is the following, which generalizes the asymptotic expansion 
of the Bernstein polynomials on standard simplices given by Abel-Ivan \cite{AI} and H\"{o}rmander \cite{Ho2}. 

\begin{theorem}
\label{AsympExp}
Let $\mathcal{B}:P \to \mathcal{M}(P)$ be a Bernstein measure on the polytope $P$. 
Then, for each non-negative integer $\nu$, there exists a differential operator $L_{\nu}(x,\partial)$ of 
order $2\nu$ such that, for any $f \in C^{\infty}(P)$, we have the following asymptotic expansion: 
\begin{equation}
\label{expansion}
B_{N}(f) \sim \sum_{\nu \geq 0}N^{-\nu}L_{\nu}(x,\partial)f, 
\end{equation}
where the expansion holds uniformly on $P$. 
$L_{0}(x,\partial)$ and $L_{1}(x,\partial)$ are given by the following: 
\begin{equation}
\label{0and1}
L_{0}(x,\partial)f=f(x),\quad 
L_{1}(x,\partial)f=\frac{1}{2}{\rm Tr}(A(x)\nabla^{2}f(x)), 
\end{equation}
where $\nabla^{2}f$ is the Hessian of $f$ and $A(x) \in {\rm Sym}(m,\mathbb{R})$ 
denotes the inverse of the defining matrix $K(x) \in {\rm Sym}(m,\mathbb{R})$. 
The asymptotic expansion \eqref{expansion} can be differentiated any number of times 
and the resulting expansion holds locally uniformly on $P^{o}$. 
\end{theorem}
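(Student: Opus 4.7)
The plan is to adapt H\"ormander's approach in \cite{Ho2} to an arbitrary Bernstein measure. The driving objects are the centered moments
\[
I_{N,\alpha}(x) := \int_P (z-x)^\alpha \, d\mathcal{B}_x^N(z), \qquad \alpha \in \mathbb{Z}_{\geq 0}^m,
\]
together with the Taylor expansion of $f$ about the base point $x$: once the $1/N$-polynomial structure of each $I_{N,\alpha}$ is in hand, regrouping the Taylor sum by powers of $N^{-1}$ produces the operators $L_\nu(x,\partial)$.

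First I would derive a differential-recurrence for $I_{N,\alpha}$. Writing $d\mathcal{B}_x^N$ in its product form \eqref{remark4} and applying condition~(2) of Definition~\ref{Bmeasure} separately to each of the $N$ factors yields the master identity
\[
\nabla B_N(g)(x) = N\int_P g(w)\, K(x)(w-x)\, d\mathcal{B}_x^N(w), \qquad g \in C(P).
\]
Differentiating $I_{N,\alpha}(x)$ under the integral sign (accounting for both the explicit $x$-dependence of $(z-x)^\alpha$ and the $x$-dependence of $d\mathcal{B}_x^N$ via this identity) and inverting with $A(x) = K(x)^{-1}$ gives
\[
N\, I_{N,\alpha+e_k}(x) = \sum_{j=1}^m A_{jk}(x)\, \partial_{x_j} I_{N,\alpha}(x) + \sum_{j=1}^m A_{jk}(x)\, \alpha_j\, I_{N,\alpha-e_j}(x),
\]
which I expect to become Lemma~\ref{recursion}. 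From the base cases $I_{N,0}(x)=1$ and $I_{N,e_j}(x)=0$ (the latter from the barycenter property), induction on $|\alpha|$ yields Lemma~\ref{polyn}: $I_{N,\alpha}(x) = N^{-\lceil|\alpha|/2\rceil}\,P_\alpha(x,1/N)$, where $P_\alpha(x,\cdot)$ is a polynomial in $1/N$ of degree $\lfloor|\alpha|/2\rfloor$ with coefficients in $C^\infty(P^o)\cap C(P)$. A single step of the recursion produces the identity $I_{N,e_i+e_j}(x) = A_{ij}(x)/N$.

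Next, inserting the Taylor expansion
\[
f(z) = \sum_{|\alpha|\leq 2M+1}\frac{\partial^\alpha f(x)}{\alpha!}(z-x)^\alpha + R_{2M+1}(z,x)
\]
into \eqref{BNf} and regrouping $\frac{\partial^\alpha f(x)}{\alpha!}\, I_{N,\alpha}(x)$ by the powers of $N^{-1}$ in which they contribute defines each $L_\nu(x,\partial)$ as a differential operator of order at most $2\nu$. The identities $I_{N,0}=1$ and $I_{N,e_i+e_j}=A_{ij}/N$ immediately give $L_0 f = f$ and $L_1 f = \frac{1}{2}\operatorname{Tr}(A(x)\nabla^2 f)$. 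The remainder integral $\int R_{2M+1}\,d\mathcal{B}_x^N$ is controlled by the pointwise bound $|R_{2M+1}(z,x)|\leq C\|z-x\|^{2M+1}$ and the Cauchy--Schwarz inequality applied to $I_{N,2\beta}(x) = O(N^{-|\beta|})$, yielding $O(N^{-M-1/2})$. Uniformity on $P$ follows because every coefficient appearing in the finite partial sum lies in $C(P)$, and the remainder bound depends only on $\operatorname{diam}(P)$ and $C^{2M+1}$-norms of $f$. For the differentiated expansion, the master identity re-expresses $\nabla B_N(f)$ as a moment integral (this time with the explicit factor $K(x)$), and iterating gives an asymptotic expansion for each $\partial^\beta B_N(f)$; because $K(x)$ is singular on $\partial P$, convergence of the differentiated expansion is only locally uniform on $P^o$.

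The main obstacle I foresee is the inductive step of Lemma~\ref{polyn}. When $|\alpha|$ is odd, the two sums on the right side of the recurrence have different naive orders in $N^{-1}$ (the first contributes at order $N^{-\lceil|\alpha|/2\rceil}$, the second at the coarser order $N^{-\lceil(|\alpha|-1)/2\rceil}$), so one must track carefully how their combination produces a polynomial in $1/N$ of the exact predicted degree for $I_{N,\alpha+e_k}$. A secondary technical point is that individual terms such as $A_{jk}(x)\,\partial_{x_j}I_{N,\alpha}(x)$ need not extend continuously to $\partial P$, even though their combination does; the boundary regularity underlying the uniformity claim on $P$ is built into the induction through this cancellation.
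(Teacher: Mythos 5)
Your proposal follows essentially the same route as the paper: the same moment functions $I_{N,\alpha}$ (the paper normalizes them by $N^{\|\alpha\|}$), the same differential recurrence derived from condition (2) via the identity $\nabla B_{N}(f)(x)=N\int_{P}f(z)K(x)(z-x)\,d\mathcal{B}_{x}^{N}(z)$, the same polynomial-in-$1/N$ structure of the moments, the same Taylor regrouping yielding $L_{0}$ and $L_{1}$, and the same commutator identity $D_{j}B_{N}=N[B_{N},X_{j}]$ to differentiate the expansion (locally uniformly on $P^{o}$ since $A(x)$ is invertible only there). The obstacle you flag is resolved in the paper just as you anticipate: the coefficients $p_{\alpha,l}$ and their $D$-derivatives remain in $C(P)\cap C^{\infty}(P^{o})$ because applying $D_{j}$ to a moment integral of $d\mathcal{B}_{x}$ produces again a moment integral (by condition (2)), so boundary continuity propagates through the induction proving Lemma \ref{polyn}.
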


The matrix $K(x)$ is not assumed to be non-degenerate. However, 
it follows that $K(x)$ is non-degenerate with the inverse matrix $A(x)$ given by 
\begin{equation}
\label{matrixA}
A(x)=\int_{P}(z-x)\otimes (z-x)\,d\mathcal{B}_{x}(z),\quad x \in P^{o}. 
\end{equation}
Note that, though the defining matrix $K(x)$ is defined only for $x$ in $P^{o}$, 
the matrix $A(x)$ in \eqref{matrixA} is defined and continuous on whole $P$. 
We have a computable representation for the differential operators $L_{\nu}(x,\partial)$. 
See Section \ref{property} for details. 

Next, to explain a construction of finitely supported Bernstein measures, let us prepare some notation. 
Let $S \subset \mathbb{R}^{m}$ be a finite set whose convex hull is $P$. Since $P$ is assumed to have 
non-empty interior, the finite set $S$ satisfies the following: 
\begin{equation}
\label{nonvoid}
{\rm span}_{\mathbb{R}}\{\alpha -\beta\,;\,\alpha,\beta \in S\}=\mathbb{R}^{m}. 
\end{equation}
Fix a positive function $c:S\to \mathbb{R}_{>0}$ on $S$. 
Define the map $\mu_{S,c}:\mathbb{R}^{m} \to \mathbb{R}^{m}$ by 
\begin{equation}
\label{momentM}
\mu_{S,c}(\tau):=\sum_{\alpha \in S}\frac{c(\alpha)e^{\ispa{\alpha,\tau}}}
{\sum_{\beta \in S}c(\beta)e^{\ispa{\beta,\tau}}}\alpha,\quad 
\tau \in \mathbb{R}^{m},  
\end{equation}
where $\ispa{u,v}$ denotes the usual Euclidean inner product on $\mathbb{R}^{m}$. 
It is well-known (\cite{Fu}) that $\mu_{S,c}$ is a diffeomorphism from $\mathbb{R}^{m}$ to $P^{o}$. 
Its inverse is denoted by $\tau_{S,c}:P^{o} \to \mathbb{R}^{m}$. 
For each $\alpha \in S$, we define the function $m_{S,c,\alpha}$ by 
\begin{equation}
\label{Scfunct}
m_{S,c,\alpha}(x)=\frac{c(\alpha)e^{\ispa{\alpha,\tau_{S,c}(x)}}}
{\sum_{\beta \in S}c(\beta)e^{\ispa{\beta,\tau_{S,c}(x)}}},\quad x \in P^{o}. 
\end{equation}
Then, clearly, $m_{S,c,\alpha}$ is a positive smooth function on $P^{o}$. Furthermore, it is not hard 
to show that $m_{S,c,\alpha}$ can continuously be extended to the boundary of $P$ so that 
it defines a continuous function on $P$ which is smooth on $P^{o}$ (Lemma \ref{contimS}). Then, 
since $\tau_{S,c}$ is the inverse map of $\mu_{S,c}$, these functions satisfy
\begin{equation}
\label{bsect}
\sum_{\alpha \in S}m_{S,c,\alpha}(x)\alpha =x
\end{equation}
for each $x \in P^{o}$, and by the continuity, it holds for each $x \in P$. 

\begin{theorem}
\label{char}
Let $S \subset P$ and $c:S \to \mathbb{R}_{>0}$ be as above. 
Let $m_{S,c,\alpha}$, $\alpha \in S$ be the functions defined by \eqref{Scfunct}. 
Then, the section $\mathcal{B}_{S,c}:P \to \mathcal{M}(P)$ defined by 
\begin{equation}
\label{fB}
\mathcal{B}_{S,c}(x)=\sum_{\alpha \in S}m_{S,c,\alpha}(x)\delta_{\alpha}
\end{equation}
is a finitely supported Bernstein measure. 
Conversely, suppose that there is a finitely supported Bernstein measure $\mathcal{B}$ 
with support $S$. Then, there exists a weight function $c:S \to \mathbb{R}_{>0}$ 
such that $\mathcal{B}=\mathcal{B}_{S,c}$. 
\end{theorem}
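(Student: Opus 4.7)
The plan is to prove the two implications of Theorem~\ref{char} separately.

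\emph{Forward direction.} One verifies the two conditions of Definition~\ref{Bmeasure} for $\mathcal{B}_{S,c}$. That each $m_{S,c,\alpha}$ is smooth on $P^{o}$ and extends continuously to $P$ is Lemma~\ref{contimS}, and the barycenter identity $\sum_{\alpha} m_{S,c,\alpha}(x)\alpha = x$ is \eqref{bsect}, so $\mathcal{B}_{S,c}$ is a continuous section of the barycenter map and $B(f) \in C^{\infty}(P^{o}) \cap C(P)$ for every $f \in C(P)$. Only the existence of a defining matrix is nontrivial. Writing $Z(\tau) = \sum_{\beta \in S} c(\beta) e^{\ispa{\beta,\tau}}$, one has $\mu_{S,c}(\tau) = \nabla_{\tau} \log Z(\tau)$, and the identity $\log m_{S,c,\alpha}(\mu_{S,c}(\tau)) = \log c(\alpha) + \ispa{\alpha,\tau} - \log Z(\tau)$ gives $\partial_{\tau_{j}} \log m_{S,c,\alpha} = \alpha_{j} - x_{j}$. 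Pulling back by $\tau_{S,c}$ and using the chain rule yields
\[
\nabla_{x} m_{S,c,\alpha}(x) = m_{S,c,\alpha}(x)\, K(x)(\alpha - x), \quad K(x) := D_{x} \tau_{S,c}(x).
\]
Since $K(x)$ is the inverse of $\nabla^{2}_{\tau} \log Z$ at $\tau_{S,c}(x)$, it is symmetric and (by \eqref{nonvoid} and the strict convexity of $\log Z$) positive definite. Summing over $\alpha \in S$ against $f(\alpha)$ then gives condition~(2).

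\emph{Converse direction.} Let $\mathcal{B} = \sum_{\alpha \in S} m_{\alpha} \delta_{\alpha}$ be a finitely supported Bernstein measure with defining matrix $K$. Property~(3) after Definition~\ref{defSupport} gives $\nabla \log m_{\alpha}(x) = K(x)(\alpha - x)$ on $P^{o}$, so $\nabla \log(m_{\alpha}/m_{\beta}) = K(x)(\alpha - \beta)$ is a gradient for every difference $\alpha - \beta$; by \eqref{nonvoid} these span $\mathbb{R}^{m}$, so $x \mapsto K(x) v$ is curl-free for every $v$, i.e.\ $\partial_{i} K_{jl} = \partial_{j} K_{il}$. Together with the symmetry of $K$, this yields, on the simply connected convex set $P^{o}$, a smooth strictly convex potential $\phi$ with $\nabla^{2} \phi = K$. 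Set $\tau(x) := \nabla \phi(x)$ and $\psi(x) := \ispa{x, \tau(x)} - \phi(x)$; a direct computation using the symmetry of $K$ gives $\nabla \psi(x) = K(x) x$, whence
\[
\nabla \bigl( \log m_{\alpha}(x) - \ispa{\alpha, \tau(x)} + \psi(x) \bigr) = K(x)(\alpha - x) - K(x) \alpha + K(x) x = 0.
\]
Hence there exist constants $c(\alpha) > 0$ with $m_{\alpha}(x) = c(\alpha)\, e^{\ispa{\alpha, \tau(x)}}\, e^{-\psi(x)}$. The normalization $\sum_{\alpha} m_{\alpha} \equiv 1$ forces $e^{\psi(x)} = \sum_{\beta} c(\beta) e^{\ispa{\beta, \tau(x)}}$, so $m_{\alpha}$ has the form \eqref{Scfunct} with $\tau(x)$ in place of $\tau_{S,c}(x)$. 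Finally the barycenter identity $\sum_{\alpha} m_{\alpha}(x)\alpha = x$ reads $\mu_{S,c}(\tau(x)) = x$, which by the known bijectivity of $\mu_{S,c} : \mathbb{R}^{m} \to P^{o}$ forces $\tau = \tau_{S,c}$; thus $\mathcal{B} = \mathcal{B}_{S,c}$.

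The main obstacle is the integrability step in the converse producing the convex potential $\phi$: the symmetry $K_{ij} = K_{ji}$ alone does not guarantee $K = \nabla^{2}\phi$, and one must exploit the additional structural fact that $K(x)(\alpha - \beta)$ is \emph{manifestly} a gradient for the spanning family of differences provided by \eqref{nonvoid}. It is precisely here, rather than in any calculation, that the hypothesis that $P$ is the convex hull of $S$ and has nonempty interior enters in an essential way.
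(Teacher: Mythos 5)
Your proof is correct, and the forward direction matches the paper's (Lemma \ref{contimS}, the identity \eqref{bsect}, and the computation $\nabla m_{S,c,\alpha}=m_{S,c,\alpha}K(\alpha-x)$ with $K=d\tau_{S,c}$ the inverse of the Hessian of $\log Z$), but your converse departs from the paper at two points. First, the integrability of $K$: the paper gets it from Lemma \ref{commute2}, the symmetry $[\nabla K(x)u]v=[\nabla K(x)v]u$, proved from the integral representation of $\nabla A(x)$ and valid for arbitrary Bernstein measures; you instead observe that $\nabla\log(m_{\alpha}/m_{\beta})=K(x)(\alpha-\beta)$ is manifestly a gradient for the spanning family \eqref{nonvoid}, which exploits the finitely supported structure. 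Both routes produce a potential with Hessian $K$ (the paper builds $\tau_{b^{*}}$ and $\delta_{b^{*}}$ directly as line integrals of $K$ from a base point $b^{*}$), so your remark that the spanning hypothesis enters ``precisely'' here is a slight overstatement of its role, though your derivation is perfectly valid. Second, and more substantively, after establishing $m_{\alpha}(x)=c(\alpha)e^{\ispa{\alpha,\tau(x)}}/\sum_{\beta}c(\beta)e^{\ispa{\beta,\tau(x)}}$ you identify $\tau$ with $\tau_{S,c}$ in one line from the barycenter identity $\mu_{S,c}(\tau(x))=x$ together with the bijectivity of $\mu_{S,c}:\mathbb{R}^{m}\to P^{o}$ (which the paper quotes from \cite{Fu} before the theorem); the paper instead runs a Legendre-duality argument, showing $\tau_{b^{*}}(x)$ is the unique minimizer of $\tau\mapsto\log\chi_{S,c}(\tau)-\ispa{x,\tau}$, together with auxiliary facts about $\tau_{b^{*}}$ being a diffeomorphism onto an open image. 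Your shortcut is legitimate and noticeably cleaner. One small point common to both arguments: the equality $\mathcal{B}=\mathcal{B}_{S,c}$ is obtained on $P^{o}$ and extends to all of $P$ because both are continuous sections; it is worth a sentence, although the paper leaves it implicit as well.
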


We discuss some properties of Bernstein measures and 
give proofs of Theorems \ref{AsympExp}, \ref{char} in Section \ref{property}. 
As we pointed out in Remark \ref{remark1}, the measures $d\mathcal{B}_{x}^{N}$ 
satisfy the large deviations principle. 
In particular, we give a concrete description of the rate functions for the large deviations in Subsection \ref{LargeDP}. 
In Section \ref{ToricGeometry}, we discuss a difference between the Bernstein measures 
and the Bergman-Bernstein measures defining the Bergman-Bernstein approximations introduced in \cite{Z}. 
In particular, we give some conditions in Proposition \ref{BBversusB} 
for when Zelditch's Bergman-Bernstein measures coincide with our Bernstein measures.

\subsection{General properties of sections of the barycenter map}
\label{generalS}

Before proceeding to the discussion on Bernstein measures, we give 
some accounts on properties of general continuous sections of the barycenter map 
$b:\mathcal{M}(P) \to P$ on the polytope $P$. 
Throughout the paper, $\{e_{j}\}_{j=1}^{m}$ denotes the standard basis for $\mathbb{R}^{m}$. 

\begin{lemma}
\label{general1}
Suppose that we are given a sequence of bounded linear maps $B_{N}:C(P) \to C(P)$ 
satisfying the following: 
\begin{enumerate}
\item $B_{N}(1)=1$, $B_{N}(f) \geq 0$ for non-negative continuous functions $f$;  
\item $B_{N}(x^{\alpha}) \to x^{\alpha}$ uniformly as $N \to \infty$ 
for $\alpha \in \mathbb{Z}_{\geq 0}^{m}$ with $\|\alpha\| \leq 2$. 
\end{enumerate}
Then, $B_{N}(f) \to f$ uniformly as $N \to \infty$. 
\end{lemma}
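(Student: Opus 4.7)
The plan is to prove this as a Korovkin-type theorem, using the standard three-monomial trick adapted to the $m$-dimensional polytope $P$. First I would fix $f \in C(P)$ and $\varepsilon > 0$, and exploit the compactness of $P$ to produce a modulus-of-continuity bound: by uniform continuity there is $\delta > 0$ with $|f(z)-f(x)| < \varepsilon$ whenever $|z-x| < \delta$. The standard pointwise estimate
\[
|f(z) - f(x)| \leq \varepsilon + \frac{2\|f\|_{C(P)}}{\delta^{2}} |z-x|^{2}, \qquad z,x \in P,
\]
then holds (trivially when $|z-x| < \delta$, and via the $\|f\|_{C(P)}$ bound when $|z-x| \geq \delta$).

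Next I would introduce the auxiliary function $\psi_x(z) = |z-x|^{2} = \sum_{j=1}^{m}(z_{j}-x_{j})^{2}$, the key point being that $\psi_x$ is a polynomial in $z$ whose coefficients depend on $x$ and whose monomials $z^{\alpha}$ in $z$ all satisfy $\|\alpha\| \leq 2$. Using positivity of $B_N$ together with $B_N(1) = 1$, the displayed inequality yields
\[
\bigl|B_{N}(f)(x) - f(x)\bigr| \leq \varepsilon + \frac{2\|f\|_{C(P)}}{\delta^{2}}\, B_{N}(\psi_x)(x),
\]
where I used the identity $B_N(f)(x) - f(x) = B_N(f(\cdot) - f(x))(x)$.

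The remaining task is to show that $B_N(\psi_x)(x) \to 0$ uniformly in $x \in P$. Expanding
\[
B_{N}(\psi_x)(x) = B_{N}\Bigl(\sum_{j} z_{j}^{2}\Bigr)(x) - 2\sum_{j} x_{j}\, B_{N}(z_{j})(x) + |x|^{2},
\]
I see that each of the finitely many terms on the right-hand side involves $B_N$ applied only to monomials of total degree $\leq 2$, so by hypothesis (2) each converges uniformly on $P$ to the corresponding monomial in $x$. Summing, the limit is $|x|^{2} - 2|x|^{2} + |x|^{2} = 0$, and the convergence is uniform in $x$ because each summand converges uniformly and $P$ is compact (so $|x_j|$ is bounded). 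Taking the supremum over $x \in P$ and then letting $\varepsilon \to 0$ completes the argument.

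The main (minor) obstacle is bookkeeping the dependence on $x$ in the expansion of $B_N(\psi_x)(x)$: one must be careful that $\psi_x$ is a function of $z$ with $x$-dependent coefficients, so that $B_N$ acts in the $z$-variable and the $x$ that appears both as the evaluation point and in the coefficients must be treated consistently. Once this is written out correctly, the uniform convergence follows immediately from the compactness of $P$ and the hypothesis on the low-degree monomials, and no further ingredient is needed.
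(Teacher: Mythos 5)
Your proof is correct. It is the classical Korovkin argument: the pointwise bound $|f(z)-f(x)|\leq \varepsilon+\tfrac{2\|f\|_{C(P)}}{\delta^{2}}|z-x|^{2}$ coming from uniform continuity, positivity and $B_{N}(1)=1$ to transfer it to $|B_{N}(f)(x)-f(x)|$, and then the expansion of $B_{N}(\psi_{x})(x)=\sum_{j}\bigl(B_{N}(\chi_{2e_{j}})(x)-2x_{j}B_{N}(\chi_{e_{j}})(x)+x_{j}^{2}\bigr)$, which tends to zero uniformly by hypothesis (2) and compactness of $P$. The paper proceeds slightly differently: it first treats $f\in C^{\infty}(P)$ via a Taylor expansion with integral remainder, using hypothesis (2) for the degree-one monomials to kill the gradient term $\sum_{j}\partial_{j}f(x)\bigl(B_{N}(\chi_{e_{j}})(x)-x_{j}\bigr)$ and the bound $|R_{2,x}(z)|\leq\|f\|_{C^{2}(P)}|z-x|^{2}$ for the remainder, and then passes to general $f\in C(P)$ by density of $C^{\infty}(P)$ together with the contraction property $\|B_{N}(f)\|_{C(P)}\leq\|f\|_{C(P)}$ (itself a consequence of positivity and $B_{N}(1)=1$). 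The core mechanism is identical in both proofs, namely controlling $B_{N}(|z-x|^{2})(x)$ through the degree-$\leq 2$ monomials; the difference is that your modulus-of-continuity argument handles all continuous $f$ in one stroke and needs no approximation step, while the paper's version trades that for a cleaner remainder estimate at the cost of the density argument. Your concern about the $x$-dependence of the coefficients of $\psi_{x}$ is handled exactly as you describe, since $B_{N}$ acts only in the $z$-variable and the finitely many coefficients $x_{j}$ are uniformly bounded on the compact set $P$; no further ingredient is needed.
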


\begin{proof}
First, we note that the following holds: 
\begin{itemize}
\item $B_{N}(f)$ is real-valued if $f$ is real; 
\item $|B_{N}(f)| \leq B_{N}(|f|)$, $f \in C(P)$;  
\item $\|B_{N}(f)\|_{C(P)} \leq \|f\|_{C(P)}$, $f \in C(P)$. 
\end{itemize}
Let $f \in C^{\infty}(P)$. We fix $x \in P$. 
For any multi-index $\alpha \in \mathbb{Z}_{\geq 0}^{m}$, we set $\chi_{\alpha}(z)=z^{\alpha}$. 
Then, inserting the Taylor expansion 
\[
\begin{gathered}
f(z)=f(x)+R_{1,x}(z)+R_{2,x}(z), \\
R_{1,x}(z)=\ispa{\nabla f (x),z-x}=\sum_{j=1}^{m}{\textstyle \frac{\partial f}{\partial x_{j}}}(x)(z_{j}-x_{j}),\\
R_{2,x}(z)=\int_{0}^{1}(1-t)\ispa{\nabla^{2}f(x+t(z-x)) (z-x),(z-x)}\,dt
\end{gathered}
\]
around $x \in P$ to $B_{N}(f)$, we have $B_{N}(f)-f=B_{N}(R_{1,x})+B_{N}(R_{2,x})$. 
By assumption, we have 
$B_{N}(R_{1,x})(x)=\sum_{j=1}^{m}\frac{\partial f}{\partial x_{j}}(x)(B_{N}(\chi_{e_{j}})(x)-x_{j}) \to 0$ uniformly in $x \in P$. 
Now, we have $|R_{2,x}(z)| \leq \|f\|_{C^{2}(P)}|z-x|^{2}$ and hence 
\[
|B_{N}(R_{2,x})(x)| \leq \|f\|_{C^{2}(P)}\sum_{j=1}^{m}
(B_{N}(\chi_{2e_{j}})(x)-2x_{j}B_{N}(\chi_{e_{j}})(x)+x_{j}^{2}). 
\]
Since $B_{N}(\chi_{2e_{j}})$ and $B_{N}(\chi_{e_{j}})$ converge uniformly to $x_{j}^{2}$, $x_{j}$, respectively, 
we conclude that $B_{N}(f) \to f$ uniformly on $P$ for any $f \in C^{\infty}(P)$. 
Since $C^{\infty}(P)$ is dense in $C(P)$, we conclude the assertion. 
\end{proof}

\begin{lemma}
\label{general2}
Let $\mathcal{B}:P \to \mathcal{M}(P)$ be a continuous section of $b:\mathcal{M}(P) \to P$. 
For positive integer $N$, we define the measure $d\mathcal{B}_{x}^{N}$ by \eqref{convolution} 
and, for $f \in C(P)$, the function $B_{N}(f) \in C(P)$ by \eqref{BNf}. 
Then, $B_{N}(f)$ converges uniformly to $f$ as $N \to \infty$ for all $f \in C(P)$. 
\end{lemma}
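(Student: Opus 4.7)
The plan is to reduce this to Lemma \ref{general1}, so the task is to verify that the operators $B_N$ defined by \eqref{BNf} satisfy the two hypotheses of that lemma, namely positivity with $B_N(1)=1$, and uniform convergence $B_N(x^\alpha)\to x^\alpha$ for $|\alpha|\le 2$. Boundedness and positivity are immediate: since each $d\mathcal{B}_x^N$ is a probability measure on $P$, we have $B_N(1)\equiv 1$, $B_N(f)\ge 0$ whenever $f\ge 0$, and $\|B_N(f)\|_{C(P)}\le\|f\|_{C(P)}$. Continuity of $x\mapsto B_N(f)(x)$ follows from the continuity of $\mathcal{B}$ (with the weak-$\ast$ topology) together with the concrete formula \eqref{remark4}.

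The main step is the moment computation. Using \eqref{remark4} and the fact that the barycenter of $d\mathcal{B}_x$ is $x$, I first show that $B_N(\chi_{e_j})(x)=x_j$ exactly: each of the $N$ integrations against $d\mathcal{B}_x(z^{(k)})$ contributes the $j$-th barycenter coordinate $x_j$, which is then averaged over $N$. For the quadratic monomial $\chi_{e_i+e_j}(z)=z_iz_j$, I expand
\[
\left(\frac{z^{(1)}+\cdots+z^{(N)}}{N}\right)_i\!\left(\frac{z^{(1)}+\cdots+z^{(N)}}{N}\right)_j
=\frac{1}{N^2}\sum_{k,l=1}^{N}z^{(k)}_i z^{(l)}_j.
\]
Splitting into the $N$ diagonal terms ($k=l$) and the $N(N-1)$ off-diagonal terms ($k\neq l$), and using independence of the factors in the product measure, I obtain
\[
B_N(\chi_{e_i+e_j})(x)=\frac{1}{N}M_{ij}(x)+\frac{N-1}{N}\,x_i x_j,
\qquad M_{ij}(x):=\int_P z_i z_j\,d\mathcal{B}_x(z).
\]
Since $P$ is compact, both $M_{ij}$ and $x_ix_j$ are uniformly bounded on $P$, so the first term is $O(1/N)$ uniformly and $B_N(\chi_{e_i+e_j})\to \chi_{e_i+e_j}$ uniformly on $P$.

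With both hypotheses of Lemma \ref{general1} verified, that lemma immediately gives $B_N(f)\to f$ uniformly for every $f\in C(P)$. The only mildly delicate point is the diagonal/off-diagonal split in the quadratic computation and the fact that the diagonal contribution $M_{ij}(x)$ is uniformly bounded (which is automatic from $P$ being compact and $d\mathcal{B}_x$ a probability measure); the rest is bookkeeping.
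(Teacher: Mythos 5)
Your proposal is correct and follows essentially the same route as the paper: reduce to Lemma \ref{general1}, note that the barycenter property gives $B_N(\chi_{e_j})=x_j$ exactly, and handle the quadratic monomials by the diagonal/off-diagonal split of $\frac{1}{N^2}\sum_{k,l}z_i^{(k)}z_j^{(l)}$, with the diagonal contribution being $O(1/N)$ uniformly by compactness of $P$. No gaps.
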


The following proof using Lemma \ref{general1} is pointed out to the author by professor Bando. 

\begin{proof}
Note that we have $b(d\mathcal{B}_{x}^{N})=x$. Thus, by Lemma \ref{general1}, we only need to show 
that $B_{N}(\chi_{\alpha})$ converges to $\chi_{\alpha}$ for each $\alpha$ with $\|\alpha\|=2$. 
Then, by definition of the convolution powers \eqref{remark4}, we have 
\[
\begin{split}
B_{N}(\chi_{e_{i}+e_{j}})&=\frac{1}{N^{2}}
\sum_{k,l=1}^{N}\int_{P \times \cdots \times P}\!\!\!
z_{i}^{(k)}z_{j}^{(l)}\,d\mathcal{B}_{x}(z^{(1)})\cdots d\mathcal{B}_{x}(z^{(N)}) \\
&=\frac{N(N-1)}{N^{2}}x_{i}x_{j} +\frac{1}{N^{2}}
\sum_{k=1}^{N}\int_{P \times \cdots \times P}\!\!\!
z_{i}^{(k)}z_{j}^{(k)}\,d\mathcal{B}_{x}(z^{(1)})\cdots d\mathcal{B}_{x}(z^{(N)}). 
\end{split}
\]
Since the second term in the above is $O(1/N)$, we conclude the assertion. 
\end{proof}

By using Lemmas \ref{general1}, \ref{general2}, we further discuss as follows. 
Let $\mathcal{B}:P \to \mathcal{M}(P)$ be a continuous section of $b:\mathcal{M}(P) \to P$. 
For a positive integer $N$, we define the measure $d\mathcal{B}_{x}^{N}$ by \eqref{convolution} 
and for $f \in C(P)$ the function $B_{N}(f) \in C(P)$ by \eqref{BNf}. 
For any multi-index $\alpha \in \mathbb{Z}_{\geq 0}^{m}$, we define the function $I_{N,\alpha}(x)$ by 
\begin{equation}
\label{functI}
I_{N,\alpha}(x):=N^{\|\alpha\|}\int_{P}(z-x)^{\alpha}\,d\mathcal{B}_{x}^{N}(z). 
\end{equation}
In the definition \eqref{functI} of the functions $I_{N,\alpha}$, the integral is multiplied by $N^{\|\alpha\|}$. 
The reason for this would be clarified in Section \ref{property}. 
Now, for any $f \in C^{\infty}(P)$, substituting the Taylor expansion
\begin{equation}
\label{TaylorE}
\begin{gathered}
f(z)=\sum_{\|\alpha\| \leq 2n-1}\frac{f^{(\alpha)}(x)}{\alpha!}(z-x)^{\alpha}
+\sum_{\|\alpha\|=2n}\frac{R_{2n}^{\alpha}(z,x)}{\alpha!}(z-x)^{\alpha},\\
R_{2n}^{\alpha}(z,x)=2n\int_{0}^{1}(1-t)^{2n-1}f^{(\alpha)}(x+t(z-x))\,dt, 
\end{gathered}
\end{equation}
we have 
\begin{equation}
\label{approxBN}
B_{N}(f)(x)=\sum_{\|\alpha\| \leq 2n-1}\frac{I_{N,\alpha}(x)}{\alpha! N^{\|\alpha\|}} \partial^{\alpha}f (x)
+S_{2n,N}(x), 
\end{equation}
where the function $S_{2n,N}(x)$ is given by 
\begin{equation}
\label{error}
S_{2n,N}(x)=\sum_{\|\alpha\| =2n}\frac{1}{\alpha!}\int_{P}
R_{2n}^{\alpha}(z,x)(z-x)^{\alpha}\,d\mathcal{B}_{x}^{N}(z). 
\end{equation}
By Lemma \ref{general2}, $\|I_{N,\alpha}\|_{C(P)}=o(N^{\|\alpha\|})$. 
Therefore, we have 
\begin{equation}
\label{esterror}
\begin{split}
|S_{2n,N}(x)| & \leq \|f\|_{C^{2n}(P)}\sum_{\|\alpha\| =2n}
\frac{1}{\alpha!}\int_{P}|(z-x)^{\alpha}|\,d\mathcal{B}_{x}^{N}(z) \\
& \leq C \|f\|_{C^{2n}(P)}N^{-2n}\sum_{\|\beta\|=n}\frac{1}{\beta!}I_{N,2\beta}(x), 
\end{split}
\end{equation}
which is clearly of order $o(1)$ as $N \to \infty$ uniformly in $x \in P$.
However we note that, in the expression \eqref{approxBN}, each term with $\|\alpha\| \geq 1$ is of order $o(1)$. 
Thus, one need to find asymptotic behavior of the functions $I_{N,\alpha}$ to make 
\eqref{approxBN} an asymptotic expansion.

\section{Properties of Bernstein measures; proofs of main theorems}
\label{property}

In this section, we discuss some properties of Bernstein measures defined in the previous section, 
and give proofs of main Theorems \ref{AsympExp}, \ref{char}.

\subsection{Properties of Bernstein measures}
\label{proofofM1}
Let $\mathcal{B}:P \to \mathcal{M}(P)$ be a Bernstein measure, and, for any $f \in C(P)$ and positive integer $N$, 
let $B_{N}(f)$ be the Bernstein approximation defined by \eqref{BNf}. 
The corresponding defining matrix is denoted by 
$K:P^{o} \to {\rm Sym}(m,\mathbb{R})$.

\begin{lemma}
\label{InverseMatrix}
For each $x \in P^{o}$, the linear map $A(x):\mathbb{R}^{m} \to \mathbb{R}^{m}$ defined by \eqref{matrixA} is the inverse of the 
symmetric linear map of $K(x):\mathbb{R}^{m} \to \mathbb{R}^{m}$. 
Furthermore, $A(x)$, and hence $K(x)$ is positive definite for each $x \in P^{o}$. 
\end{lemma}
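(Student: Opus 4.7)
The plan is to compute $A(x)K(x)$ directly as a matrix identity by testing the gradient formula in condition~(2) of Definition~\ref{Bmeasure} on coordinate functions, and then to deduce positivity of $A(x)$ from its quadratic-form representation together with the fact that $K(x)$ inverts it.

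First, I would take $f(z)=z_{j}$ in the gradient formula. Since $\mathcal{B}$ is a section of the barycenter map we have $B(f)(x)=\int_{P}z_{j}\,d\mathcal{B}_{x}(z)=x_{j}$, so $\nabla B(f)(x)=e_{j}$. Inserting this into condition~(2) gives
\[
e_{j}=K(x)\int_{P}z_{j}(z-x)\,d\mathcal{B}_{x}(z).
\]
Because the barycenter of $d\mathcal{B}_{x}$ is $x$, the integrand may be rewritten as $(z_{j}-x_{j})(z-x)$ up to a term that integrates to $0$; the resulting vector is precisely the $j$-th column $A(x)e_{j}$ of the matrix in \eqref{matrixA}. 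Thus $K(x)A(x)e_{j}=e_{j}$ for every $j$, so $K(x)A(x)=I$, proving that $A(x)$ is the inverse of $K(x)$.

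Next, I would establish positivity. The definition \eqref{matrixA} manifestly gives
\[
\ispa{A(x)v,v}=\int_{P}\ispa{v,z-x}^{2}\,d\mathcal{B}_{x}(z)\geq 0,
\]
so $A(x)$ is symmetric and positive semi-definite. Since we just showed $A(x)$ is invertible, it has no zero eigenvalue, hence it is positive definite. Consequently its inverse $K(x)$ is also symmetric and positive definite.

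The only step requiring any care is the justification that $\int_{P}z_{j}(z-x)\,d\mathcal{B}_{x}(z)$ equals $A(x)e_{j}$, which is a one-line manipulation using $\int_{P}(z-x)\,d\mathcal{B}_{x}(z)=0$; there is no real obstacle. The role of the interior assumption $x\in P^{o}$ is only that condition~(2) of Definition~\ref{Bmeasure} is available there, so that $K(x)$ is defined in the first place.
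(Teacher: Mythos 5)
Your proof is correct and is essentially the paper's own argument: the paper likewise differentiates the barycenter identity $x=\int_P z\,d\mathcal{B}_x(z)$ (testing against a general direction $u$ rather than the coordinate functions $z_j$), uses $\int_P(z-x)\,d\mathcal{B}_x(z)=0$ to get $A(x)K(x)u=u$, and deduces positive definiteness from the semi-definiteness visible in \eqref{matrixA} together with invertibility. The only cosmetic difference is that you obtain $K(x)A(x)=I$ column by column instead of $A(x)K(x)=I$, which is equivalent for square matrices.
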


\begin{proof}
Differentiating the identity $x=\int_{P} z\,d\mathcal{B}_{x}(z)$, 
we find that, for any $u \in \mathbb{R}^{m}$, 
\[
u=\int_{P} \ispa{K(x)(z-x),u}z\,d\mathcal{B}_{x}(z)=
\int_{P} \ispa{K(x)(z-x),u}(z-x)\,d\mathcal{B}_{x}(z),  
\]
which equals $A(x)K(x)u$ by definition \eqref{matrixA} of $A(x)$. 
The fact that $A(x)$ is non-negative (and hence positive definite) follows from the definition \eqref{matrixA}. 
\end{proof}

\begin{lemma}
\label{diffBNf}
For any $f \in C(P)$, we have 
\[
\nabla B_{N}(f) (x)=N\int_{P}f(z)K(x)(z-x)\,d\mathcal{B}_{x}^{N}. 
\]
\end{lemma}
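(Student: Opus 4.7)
The plan is to differentiate the $N$-fold integral representation
\[
B_N(f)(x) \;=\; \int_{P^N} F(z)\, d\mathcal{B}_x(z^{(1)})\cdots d\mathcal{B}_x(z^{(N)}),
\qquad F(z) := f\!\left(\tfrac{z^{(1)}+\cdots+z^{(N)}}{N}\right),
\]
supplied by \eqref{remark4}. Since $F$ does not depend on $x$, all of the $x$-dependence is carried by the product measure $d\mathcal{B}_x^{\otimes N}$, and the task reduces to a Leibniz-type rule for that product measure.

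The first step is to establish, by induction on $N$, that
\[
\nabla_x \int_{P^N} F(z)\, d\mathcal{B}_x^{\otimes N}(z)
\;=\; \sum_{k=1}^N \int_{P^N} F(z)\, K(x)(z^{(k)}-x)\, d\mathcal{B}_x^{\otimes N}(z).
\]
The base case $N=1$ is exactly condition (2) of Definition \ref{Bmeasure}. For the inductive step, Fubini writes $B_N(f)(x) = \int_P H(x,y)\, d\mathcal{B}_x(y)$ with $H(x,y) = \int_{P^{N-1}} F\bigl|_{z^{(N)}=y}\, d\mathcal{B}_x^{\otimes(N-1)}$; differentiating the outer integral by a product rule (which is legitimate because $y\mapsto H(x_0,y)$ is continuous and $\nabla_x H(x,y)$ is jointly continuous on compact sets), combined with the inductive formula for $\nabla_x H$, assembles into the Leibniz identity at level $N$.

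The second step uses the symmetry of both $F$ and $d\mathcal{B}_x^{\otimes N}$ in $z^{(1)},\ldots,z^{(N)}$. All $N$ terms of the Leibniz sum agree, and their total may be rewritten as
\[
K(x)\int_{P^N} F(z)\sum_{k=1}^N (z^{(k)}-x)\, d\mathcal{B}_x^{\otimes N}
\;=\; N\,K(x)\int_{P^N} F(z)\!\left(\tfrac{z^{(1)}+\cdots+z^{(N)}}{N}-x\right) d\mathcal{B}_x^{\otimes N}.
\]
By the definition \eqref{convolution}, $d\mathcal{B}_x^N$ is the push-forward of $d\mathcal{B}_x^{\otimes N}$ under the map $(z^{(1)},\ldots,z^{(N)})\mapsto (z^{(1)}+\cdots+z^{(N)})/N$, so the change of variable $z=(z^{(1)}+\cdots+z^{(N)})/N$ collapses the right-hand side to $N\!\int_P f(z)\,K(x)(z-x)\,d\mathcal{B}_x^N(z)$, which is the claimed identity.

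The main obstacle is the rigorous justification of differentiating under the integral in the inductive step, namely the product rule for $\nabla_x\!\int_P H(x,y)\, d\mathcal{B}_x(y)$ when the integrand itself depends on $x$. This requires enough joint regularity of $H(x,y)$ to combine condition (2) (applied at a fixed $x_0$ to the continuous map $y\mapsto H(x_0,y)$) with a dominated-convergence argument for the $x$-derivative; both ingredients follow from the continuity of the section $x\mapsto d\mathcal{B}_x$ built into Definition \ref{Bmeasure}, the smoothness of the defining matrix $K$ on $P^o$, and the compactness of $P$.
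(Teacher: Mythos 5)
Your proposal is correct and follows essentially the same route as the paper: differentiate the $N$-fold product representation \eqref{remark4}, apply condition (2) of Definition \ref{Bmeasure} to each factor of the product measure (the Leibniz step, which the paper performs in a single line and you justify by induction), sum the $N$ terms into $K(x)(z^{(1)}+\cdots+z^{(N)}-Nx)$, and push forward under the dilation to obtain $N\int_P f(z)K(x)(z-x)\,d\mathcal{B}_x^N(z)$. Your explicit attention to justifying the differentiation under the $x$-dependent measure is a refinement the paper leaves implicit, not a different method.
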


\begin{proof}
The assertion is shown by computing
$\nabla B_{N}(f)(x)$ with the definition of the dilated convolution power \eqref{remark4} in the following way: 
\[
\begin{split}
\nabla B_{N}(f)(x)&=\sum_{j=1}^{N}\int_{P\times \cdots \times P}\!\!\!\!\! f((z_{1}+\cdots +z_{N})/N)
K(x)(z_{j}-x)\,d\mathcal{B}_{x}(z_{1})\cdots d\mathcal{B}_{x}(z_{N}) \\
&=\int_{P \times \cdots \times P}
f((z_{1}+\cdots +z_{N})/N) \times \\ 
&\hspace{3cm}\times 
K(x)(z_{1}+ \cdots +z_{N}-Nx)\,d\mathcal{B}_{x}(z_{1})\cdots d\mathcal{B}_{x}(z_{N})\\
&=\int_{NP}f(w/N)K(x)(w-Nx)\,d(\mathcal{B}_{x} \ast \cdots \ast \mathcal{B}_{x})(w)\\
&=N\int_{P}f(z)K(x)(z-x)\,d\mathcal{B}_{x}^{N}(z). 
\end{split}
\]
\end{proof}

\begin{lemma}
\label{repofA}
We have $\displaystyle A(x)=N\int_{P}(z-x) \otimes (z-x)\,d\mathcal{B}_{x}^{N}(z)$. 
\end{lemma}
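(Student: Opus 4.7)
The plan is to unfold the definition of the dilated convolution power via the formula \eqref{remark4} and then exploit the fact that $\mathcal{B}_x$ has barycenter $x$, which causes all off-diagonal cross terms to vanish.

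First I would write
\[
N\int_{P}(z-x)\otimes(z-x)\,d\mathcal{B}_{x}^{N}(z)
=\frac{1}{N}\int_{P^{N}}\Bigl(\sum_{j=1}^{N}(z^{(j)}-x)\Bigr)\otimes\Bigl(\sum_{k=1}^{N}(z^{(k)}-x)\Bigr)\,\prod_{i=1}^{N}d\mathcal{B}_{x}(z^{(i)}),
\]
using \eqref{convolution} and \eqref{remark4} (the factor $N$ outside combines with $1/N^{2}$ from the dilation $D_{1/N}$ to give $1/N$). Then I would expand the outer product into a double sum over $(j,k)$ and split into the diagonal contribution $j=k$ and the off-diagonal contribution $j\neq k$.

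For $j\neq k$, the factor Fubini-separates into a product of two independent integrals, each of which is
\[
\int_{P}(z-x)\,d\mathcal{B}_{x}(z)=b(\mathcal{B}_{x})-x=0
\]
by the defining property that $\mathcal{B}$ is a section of the barycenter map. So every off-diagonal term is zero. For $j=k$, the inner integrals over the remaining $N-1$ variables produce $1$ (total mass), leaving
\[
\int_{P}(z^{(j)}-x)\otimes(z^{(j)}-x)\,d\mathcal{B}_{x}(z^{(j)})=A(x)
\]
by the definition \eqref{matrixA} of $A(x)$. Since there are $N$ such diagonal terms, the total is $\frac{1}{N}\cdot N\cdot A(x)=A(x)$.

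There is no real obstacle here; the main thing is just careful bookkeeping of the $N$-factors from the dilation versus the outer $N$ in the statement, and a clean appeal to Fubini to justify separating the independent factors in the off-diagonal terms.
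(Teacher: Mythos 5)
Your proof is correct, but it takes a different route from the paper's. The paper proves Lemma \ref{repofA} by differentiating the identity $x=\int_{P}z\,d\mathcal{B}_{x}^{N}(z)$ in $x$ and invoking Lemma \ref{diffBNf}, which gives ${\rm Id}=N\int_{P}(z-x)\otimes K(x)(z-x)\,d\mathcal{B}_{x}^{N}(z)$; the desired formula then follows because $A(x)=K(x)^{-1}$ by Lemma \ref{InverseMatrix}. That argument leans on condition (2) of Definition \ref{Bmeasure} (through the defining matrix $K$ and Lemma \ref{diffBNf}) and is phrased for $x\in P^{o}$, where $K$ is defined. You instead compute the left-hand side directly from \eqref{convolution} and \eqref{remark4}: expanding $(z-x)\otimes(z-x)$ as $\tfrac{1}{N^{2}}\sum_{j,k}(z^{(j)}-x)\otimes(z^{(k)}-x)$, killing the off-diagonal terms by the barycenter property $\int_{P}(z-x)\,d\mathcal{B}_{x}(z)=0$, and recognizing each of the $N$ diagonal terms as the integral in \eqref{matrixA}. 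This is the standard variance-of-an-i.i.d.-sum computation, and your bookkeeping of the dilation factors is right. What your approach buys is generality and economy: it uses only that $\mathcal{B}$ is a section of the barycenter map and the definition \eqref{matrixA} of $A(x)$ (interpreted as that integral), so it needs neither $K$ nor its invertibility and makes sense for every $x\in P$, indeed for any continuous section as in Subsection \ref{generalS}. What the paper's route buys is that it identifies the integral at once with $K(x)^{-1}$, which is the form actually used later (e.g.\ in Lemma \ref{recursion} and Theorem \ref{AsympExp}); with your argument one still needs Lemma \ref{InverseMatrix} to make that identification, but that lemma is already in place, so nothing is lost.
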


\begin{proof}
Since the barycenter of $d\mathcal{B}_{x}^{N}$ is also $x$, we have $x=\int_{P} z\,d\mathcal{B}_{x}^{N}(z)$. 
Differentiating this identity and using Lemma \ref{diffBNf}, we obtain 
\[
{\rm Id}=N\int_{P} z \otimes K(x)(z-x)\,d\mathcal{B}_{x}^{N}(z)
=N \int_{P} (z-x) \otimes K(x)(z-x)\,d\mathcal{B}_{x}^{N}(z), 
\]
which means the desired formula. 
\end{proof}

For any $u \in \mathbb{R}^{m}$, we define the first order differential operator $D_{u}$ on $P$ by 
\begin{equation}
\label{Adiff}
D_{u}f (x):=\ispa{A(x)\nabla f (x),u}=\ispa{\nabla f(x),A(x)u}, 
\end{equation}
where $\nabla f (x)$ is the gradient of a function $f$ on $P$. 
For any smooth function $L:P^{o} \to {\rm Sym}(m,\mathbb{R})$, we write 
\[
[\nabla L(x)v]u:=\left. 
\frac{d}{dt}
\right|_{t=0}L(x+tv)u,\quad x \in P^{o},\ u,v \in \mathbb{R}^{m}. 
\]

\begin{lemma}
\label{commute1}
For any $u, v \in \mathbb{R}^{m}$, we have $[D_{u},D_{v}]=0$. 
\end{lemma}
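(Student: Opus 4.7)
The plan is to show that the only term that could obstruct commutativity vanishes by a symmetry of the third moments of $d\mathcal{B}_x$. Write the first-order operator in components as $D_u = \sum_{i,j} A_{ij}(x)\,u_j\,\partial_i$. Expanding the commutator, the second-order piece cancels by symmetry of mixed partials and by symmetry of $A$, so
\[
[D_u,D_v]f(x) = \sum_k \Bigl\{\sum_{i,j,l} u_j v_l\bigl[A_{ij}(x)\,\partial_i A_{kl}(x)-A_{il}(x)\,\partial_i A_{kj}(x)\bigr]\Bigr\}\,\partial_k f(x).
\]
Thus it suffices to prove that the combination $\sum_i A_{ij}(x)\,\partial_i A_{kl}(x)$ is fully symmetric in $(j,k,l)$.

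To establish this symmetry, I would apply the defining identity $\nabla B(f)(x)=\int_P f(z)\,K(x)(z-x)\,d\mathcal{B}_x(z)$ from Definition~\ref{Bmeasure}(2) with the quadratic test function $f(z)=z_j z_k$. On the one hand, expanding $z_j z_k = (z_j-x_j)(z_k-x_k)+x_j(z_k-x_k)+x_k(z_j-x_j)+x_j x_k$ together with Lemma~\ref{InverseMatrix} and the fact that $d\mathcal{B}_x$ has barycenter $x$ gives $B(z_j z_k)(x)=A_{jk}(x)+x_j x_k$. On the other hand, the same expansion inside the right-hand integral yields $\int_P z_j z_k K(x)(z-x)\,d\mathcal{B}_x(z) = K(x)T_{jk}(x)+x_j e_k+x_k e_j$, where $T_{jk}(x)\in\mathbb{R}^m$ has components $T_{jk}^l(x)=\int_P(z_j-x_j)(z_k-x_k)(z_l-x_l)\,d\mathcal{B}_x(z)$. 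Equating and cancelling the polynomial terms, one gets $\nabla A_{jk}(x)=K(x)T_{jk}(x)$, and applying $A(x)=K(x)^{-1}$ (Lemma~\ref{InverseMatrix}) produces the key identity
\[
\sum_i A_{li}(x)\,\partial_i A_{jk}(x) \;=\; T_{jk}^l(x) \;=\; \int_P(z_j-x_j)(z_k-x_k)(z_l-x_l)\,d\mathcal{B}_x(z),
\]
whose right-hand side is manifestly symmetric in $(j,k,l)$.

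Substituting this identity back into the commutator formula, the bracketed expression $\sum_i A_{ij}\partial_i A_{kl}-\sum_i A_{il}\partial_i A_{kj}$ becomes $T_{kl}^j-T_{kj}^l$, which vanishes by the full symmetry of the third moment. Hence $[D_u,D_v]=0$. The only real step is the derivation of the third-moment identity from condition (2) of Definition~\ref{Bmeasure}; once this is in hand, the rest is bookkeeping. There should be no serious obstacle, since differentiation under the integral sign is justified by the smoothness of $x\mapsto d\mathcal{B}_x$ guaranteed by Definition~\ref{Bmeasure}(1) applied to the coordinate monomials.
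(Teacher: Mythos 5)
Your proof is correct and is essentially the paper's argument: the crux in both cases is the identity $\nabla A_{jk}(x)=K(x)T_{jk}(x)$ with $T_{jk}^{l}(x)=\int_{P}(z_j-x_j)(z_k-x_k)(z_l-x_l)\,d\mathcal{B}_x(z)$, obtained by applying Definition \ref{Bmeasure}(2) together with the barycenter identity and Lemma \ref{InverseMatrix}, so that the first-order part of the commutator becomes an antisymmetrization of the fully symmetric third central moment. The paper packages the same computation coordinate-free, writing $[\nabla A(x)\lambda]u$ as that third-moment integral and contracting with $\lambda=A(x)v$ to see $D_vD_uf$ is symmetric in $u,v$, while you carry it out in components; the content is the same.
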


\begin{proof}
For any $\lambda \in \mathbb{R}^{m}$, a simple computation shows 
\begin{equation}
\label{aux1}
\begin{gathered}
\ispa{\nabla D_{u}f(x),\lambda}=\ispa{\nabla^{2}f(x)\lambda,A(x)u}+\ispa{\nabla f(x),[\nabla A(x)\lambda]u}, \\
[\nabla A(x)\lambda]u=\int_{P} \ispa{z-x,u}\ispa{K(x)(z-x),\lambda}(z-x)\,d\mathcal{B}_{x}(z), 
\end{gathered}
\end{equation}
where $\nabla^{2}f(x)$ is the Hessian of $f$. 
Thus, by setting $\lambda=A(x)v$, we find 
\[
\begin{split}
D_{v}&D_{u}f(x) \\
&=\ispa{\nabla^{2}f(x)A(x)v,A(x)u}+
\int_{P} \ispa{z-x,u}\ispa{z-x,v}\ispa{\nabla f(x),z-x}\,d\mathcal{B}_{x}(z), 
\end{split}
\]
which is obviously symmetric in $u$ and $v$. 
\end{proof}

The following lemma shows an integrability of the 
defining matrix $K$, which will be used to prove Theorem \ref{char}. 

\begin{lemma}
\label{commute2}
We have $[\nabla K(x)u]v=[\nabla K(x)v]u$ 
for any $u,v \in \mathbb{R}^{m}$ and $x \in P^{o}$. 
\end{lemma}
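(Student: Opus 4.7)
The plan is to exploit the fact that $B(f)\in C^{\infty}(P^o)$ for every $f\in C(P)$ (condition (1) of Definition \ref{Bmeasure}), so that the mixed partials of $B(f)$ commute, and then to apply condition (2) twice to convert this commutativity into the desired symmetry of $\nabla K$.

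First, I rewrite condition (2) in coordinates as
$$\partial_j B(f)(x)=\sum_k K_{jk}(x)\bigl[B(\chi_k f)(x)-x_k B(f)(x)\bigr],$$
where $\chi_k(z)=z_k$. The key observation is that both $B(f)$ and $B(\chi_k f)$ are integrals against $d\mathcal{B}_x$ of fixed, $x$-independent functions in $C(P)$, so condition (2) applies to them verbatim. Differentiating the displayed formula once more in $x_i$, applying condition (2) to $B(f)$ and to $B(\chi_k f)$, and simplifying, gives
$$\partial_i\partial_j B(f)(x)=\sum_k[\partial_i K_{jk}(x)]\int_P(z_k-x_k)f(z)\,d\mathcal{B}_x(z)+R_{ij}(x,f),$$
where $R_{ij}(x,f)$ collects the terms $-K_{ij}(x)B(f)(x)$ and $\sum_{k,l}K_{jk}(x)K_{il}(x)\int_P(z_k-x_k)(z_l-x_l)f(z)\,d\mathcal{B}_x(z)$. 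Both are manifestly invariant under $i\leftrightarrow j$ (the double sum using the symmetry of its integrand in $k,l$).

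Antisymmetrizing in $i,j$ and using $\partial_i\partial_j B(f)=\partial_j\partial_i B(f)$ yields
$$\sum_k\bigl[\partial_i K_{jk}(x)-\partial_j K_{ik}(x)\bigr]\int_P(z_k-x_k)f(z)\,d\mathcal{B}_x(z)=0$$
for every $f\in C(P)$ and every $x\in P^o$. For fixed $i,j,x$, set $c_k:=\partial_i K_{jk}(x)-\partial_j K_{ik}(x)$ and apply this identity to the linear test function $f(z)=\langle c,z-x\rangle$. The left-hand side collapses to $\int_P\langle c,z-x\rangle^2\,d\mathcal{B}_x(z)=\langle A(x)c,c\rangle$, which by the positive definiteness of $A(x)$ (Lemma \ref{InverseMatrix}) forces $c=0$. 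Hence $\partial_i K_{jk}(x)=\partial_j K_{ik}(x)$ for all $i,j,k$, and combining with the symmetry $K_{ij}=K_{ji}$ this is exactly the coordinate form of $[\nabla K(x)u]v=[\nabla K(x)v]u$.

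The only delicate point is handling the second differentiation under the $x$-dependent measure $d\mathcal{B}_x$; I bypass it by expressing everything as Bernstein integrals of the fixed continuous functions $f,\chi_k f,\chi_k\chi_l f$, to which condition (2) applies directly. After this reduction the argument is purely algebraic, and the positivity of $A(x)$ closes the loop.
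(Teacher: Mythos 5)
Your proof is correct, but it follows a genuinely different route from the paper's. The paper obtains the symmetry by an explicit computation: from $K(x)=A(x)^{-1}$ it writes $[\nabla K(x)u]v=-K(x)[\nabla A(x)u]K(x)v$ and inserts the integral formula \eqref{aux1} for $\nabla A$ (derived in the proof of Lemma \ref{commute1}), which gives
\[
[\nabla K(x)u]v=-\int_{P}\ispa{K(x)(z-x),v}\ispa{K(x)(z-x),u}\,K(x)(z-x)\,d\mathcal{B}_{x}(z),
\]
so the symmetry in $u,v$ is manifest from the third-moment expression. You instead never compute $\nabla K$ at all: you differentiate the coordinate form of condition (2) once more, using only that $B(g)$ is smooth on $P^{o}$ for the fixed functions $g=f,\chi_k f$, antisymmetrize in the two derivative indices using equality of mixed partials, and then kill the antisymmetric part by testing against $f(z)=\ispa{c,z-x}$ and invoking the positive definiteness of $A(x)$ from Lemma \ref{InverseMatrix} together with \eqref{matrixA}. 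Your argument is slightly longer but has the merit of staying entirely within conditions (1)--(2) of Definition \ref{Bmeasure} applied to fixed test functions, thereby sidestepping any differentiation of an integral with $x$-dependent integrand against the $x$-dependent measure; the paper's argument is shorter and yields, as a by-product, an explicit formula for $\nabla K$ as a third central moment, from which symmetry (and more) can be read off directly. One small point worth making explicit in your write-up is the final bookkeeping: your identity is $\partial_i K_{jk}=\partial_j K_{ik}$ (derivative index versus first matrix index), and it is only after using $K_{jk}=K_{kj}$ that this becomes the stated $[\nabla K(x)u]v=[\nabla K(x)v]u$ — you note this, and it is indeed all that is needed.
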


\begin{proof}
For $x \in P^{o}$, we have $K(x)=A(x)^{-1}$, and hence, by \eqref{aux1}, we obtain
\[
\begin{split}
[\nabla K(x)u]v&=-K(x)[\nabla A(x)u]K(x)v\\
&=-\int_{P}\ispa{K(x)(z-x),v}\ispa{K(x)(z-x),u}K(x)(z-x)\,d\mathcal{B}_{x}(z), 
\end{split}
\]
which is clearly symmetric in $u$ and $v$. 
\end{proof}

The next proposition shows a uniqueness of the Bernstein measure 
for a given function $K:P^{o} \to {\rm Sym}(m,\mathbb{R})$. 
For any multi-index $\alpha=(\alpha_{1},\ldots,\alpha_{m}) \in \mathbb{Z}_{\geq 0}^{m}$, 
we write 
\begin{equation}
\label{Adiff2}
D^{\alpha}:=D_{1}^{\alpha_{1}}\cdots D_{m}^{\alpha_{m}},\quad 
D_{j}:=D_{e_{j}}, 
\end{equation}
where $D_{e_{j}}$ is defined in \eqref{Adiff}.

\begin{proposition}
\label{uniqueness}
Let $K:P^{o} \to {\rm Sym}(m,\mathbb{R}^{m})$ be a smooth map. 
Then, the Bernstein measure with the defining matrix $K$ is, 
if it exists, unique. 
\end{proposition}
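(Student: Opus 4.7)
The plan is to show that any two Bernstein measures $\mathcal{B}_1, \mathcal{B}_2$ sharing the same defining matrix $K$ produce the same integrals against all polynomials, and then invoke Stone-Weierstrass to conclude that $\mathcal{B}_1(x) = \mathcal{B}_2(x)$ for every $x \in P$. Write $B_i(f)(x) = \int_P f\,d\mathcal{B}_{i,x}$ and let $\chi_\alpha(z) = z^\alpha$ for multi-indices $\alpha \in \mathbb{Z}_{\geq 0}^m$.

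The key step is to extract a recursion that expresses $B_i(\chi_{\alpha+e_k})$ in terms of $B_i(\chi_\alpha)$ using only $K$ (equivalently $A = K^{-1}$). Starting from condition (2) in Definition \ref{Bmeasure}, I would write
\[
\nabla_j B_i(f)(x) = \sum_\ell K_{j\ell}(x)\int_P f(z)(z_\ell - x_\ell)\,d\mathcal{B}_{i,x}(z),
\]
and multiply by $A(x)$ to invert $K(x)$, which by Lemma \ref{InverseMatrix} exists on $P^o$. This yields
\[
D_k B_i(f)(x) = \int_P f(z)(z_k - x_k)\,d\mathcal{B}_{i,x}(z) = B_i(z_k f)(x) - x_k B_i(f)(x),
\]
where $D_k = D_{e_k}$ is the first-order operator from \eqref{Adiff}, which depends only on $A(x)$ and hence only on $K$. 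Rearranged, this reads $B_i(\chi_{\alpha+e_k})(x) = D_k B_i(\chi_\alpha)(x) + x_k B_i(\chi_\alpha)(x)$ for $x \in P^o$.

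Starting from $B_i(1)(x) = 1$ and iterating in $\|\alpha\|$, I obtain expressions for $B_i(\chi_\alpha)(x)$ that depend only on $K$ (in particular, they are independent of $i$), first on $P^o$ and then on all of $P$ by continuity. Hence $\int_P \chi_\alpha\,d\mathcal{B}_{1,x} = \int_P \chi_\alpha\,d\mathcal{B}_{2,x}$ for every multi-index $\alpha$ and every $x \in P$. Since $P$ is compact, polynomials are dense in $C(P)$ by Stone-Weierstrass, so the two probability measures $\mathcal{B}_1(x)$ and $\mathcal{B}_2(x)$ agree on a dense subspace of $C(P)$, and therefore coincide.

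There is no substantial obstacle here; the only subtle point is that $D_k$ and the recursion are defined a priori only on $P^o$, so one must rely on the continuity of $B_i(\chi_\alpha)$ on all of $P$ (guaranteed by condition (1) of Definition \ref{Bmeasure}) to extend the uniqueness to boundary points. The commutativity $[D_u, D_v] = 0$ from Lemma \ref{commute1} is not logically needed for the proof, since the recursion determines $B_i(\chi_\alpha)$ for any fixed choice of order of differentiation, but it is reassuring that the construction is consistent.
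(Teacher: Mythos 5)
Your proof is correct and follows essentially the same route as the paper: you derive the relation $B(\chi_{e_k}f)=(D_k+X_k)B(f)$ from condition (2) of Definition \ref{Bmeasure}, iterate it to determine $B(\chi_\alpha)$ from $K$ alone, and conclude by continuity and density of polynomials in $C(P)$. Your remark that the commutativity $[D_u,D_v]=0$ is not logically needed (the paper invokes it only to write $L^\alpha$ unambiguously) is accurate.
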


\begin{proof}
Let $\mathcal{B}:P\to \mathcal{M}(P)$ be a Bernstein measure with the defining 
matrix $K:P^{o} \to {\rm Sym}(m,\mathbb{R})$. Then, the matrix $K(x)$ is non-degenerate and the inverse matrix 
is given by $A:P \to {\rm Sym}(m,\mathbb{R})$. 
We note that the inverse matrix $A(x)$ defines the differential operators 
$L_{j}:=X_{j}+D_{j}$, $j=1,\ldots,m$, where $X_{j}$ is the multiplication operator: $X_{j}f (x)=x_{j}f(x)$. 
Then, it is easy to see that 
\begin{equation}
\label{creation1}
L_{j}B(f)(x)=B(X_{j}f),\quad 
j=1,\ldots,m, 
\end{equation}
where $B(f)$ is defined in \eqref{integralB}. 
For each $\alpha \in \mathbb{Z}_{\geq 0}^{m}$, we denote the monomial with 
the weight $\alpha$ by $\chi_{\alpha}(z)=z^{\alpha}$. 
Then, by \eqref{creation1}, we have $L_{j}B(\chi_{\alpha})=B(\chi_{\alpha +e_{j}})$. 
A direct computation shows that 
\begin{equation}
\label{commutator1}
D_{k}X_{j}-X_{j}D_{k}=\ispa{A(x)e_{j},e_{k}}=I_{1,e_{j}+e_{k}}, 
\end{equation}
and which is symmetric in $j$ and $k$. Hence $[L_{j},L_{k}]=0$. 
We denote $L^{\alpha}=L_{1}^{\alpha_{1}}\cdots L_{m}^{\alpha_{m}}$ 
for $\alpha=(\alpha_{1},\ldots,\alpha_{m}) \in \mathbb{Z}_{\geq 0}^{m}$. 
We then obtain 
\begin{equation}
\label{creation2}
B(\chi_{\alpha})=L^{\alpha}B(1)=L^{\alpha} \cdot 1
\end{equation}
for each $\alpha \in \mathbb{Z}_{\geq 0}^{m}$. 
By \eqref{creation2}, the restriction of the map $B:C(P) \to C(P)$ to the 
set of polynomials on $P$ is determined by the matrix $K(x)$ because 
the differential operators $L_{j}$ are defined in terms of $A(x)=K(x)^{-1}$. 
Since $B:C(P) \to C(P)$ is continuous, it is determined by $K(x)$. 
\end{proof}

Now, for each $\alpha \in \mathbb{Z}_{\geq 0}^{m}$ and positive integer $N$, define 
the function $I_{N,\alpha} \in C(P) \cap C^{\infty}(P^{o})$ by \eqref{functI} with respect to the measure 
$d\mathcal{B}_{x}^{N}$ determined by the Bernstein measure $\mathcal{B}:P \to \mathcal{M}(P)$. 
Note that, by definition, 
\[
I_{1,e_{i}+e_{j}}(x)=\ispa{A(x)e_{i},e_{j}}=\int_{P}(z_{i}-x_{i})(z_{j}-x_{j})\,d\mathcal{B}_{x}(z), 
\]
where $z_{j}$ denotes the $j$-th coordinate of $z \in \mathbb{R}^{m}$. 
From this, we have $I_{1,e_{i}+e_{j}} \in C(P) \cap C^{\infty}(P^{o})$. 
\begin{lemma}
\label{recursion}
The following identities hold; 
\begin{enumerate}
\item $I_{N,0}(x) \equiv 1$, $I_{N,e_{j}}(x) \equiv 0$, 
$I_{N,e_{i}+e_{j}}(x)=N\ispa{A(x)e_{i},e_{j}}$, \\
$I_{N,e_{i}+e_{j}+e_{k}}(x)=NI_{1,e_{i}+e_{j}+e_{k}}(x)$ for any $1 \leq i,j,k \leq m$ and $N$.  
\item For any $\alpha \in \mathbb{Z}_{\geq 0}^{M}$, $1 \leq j \leq m$ and $N$, 
\begin{equation}
\label{requr}
I_{N,\alpha +e_{j}}(x)=D_{j}I_{N,\alpha}(x)+
\sum_{i=1}^{m}\alpha_{i}I_{N,\alpha-e_{i}}(x)I_{N,e_{i}+e_{j}}(x). 
\end{equation}
\end{enumerate}
\end{lemma}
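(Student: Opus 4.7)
The plan is to treat parts (1) and (2) in turn, with part (2) being the main content. For part (1), the identities $I_{N,0}\equiv 1$ and $I_{N,e_j}\equiv 0$ follow immediately from the fact that $d\mathcal{B}_x^N$ is a probability measure whose barycenter is $x$ (a property inherited from $d\mathcal{B}_x$ through the convolution). The quadratic formula $I_{N,e_i+e_j}(x)=N\ispa{A(x)e_i,e_j}$ is the $(i,j)$-entry of the matrix identity in Lemma \ref{repofA}. For the cubic identity, I would use the explicit representation \eqref{remark4}: writing $z=(w_1+\cdots+w_N)/N$ with the $w_r$ i.i.d.\ under $d\mathcal{B}_x$, one has
\[
N^{3}\int_P(z_i-x_i)(z_j-x_j)(z_k-x_k)\,d\mathcal{B}_x^N(z)=\sum_{s_1,s_2,s_3=1}^{N}\int_{P^N}(w_{s_1}-x)_i(w_{s_2}-x)_j(w_{s_3}-x)_k\prod_r d\mathcal{B}_x(w_r).
\]
Since $\int_P(w-x)\,d\mathcal{B}_x(w)=0$, Fubini and independence kill every term in which some $s_r$ is distinct from the other two, so only the $N$ diagonal terms $s_1=s_2=s_3$ survive; each contributes $I_{1,e_i+e_j+e_k}(x)$, giving $N\,I_{1,e_i+e_j+e_k}(x)$ as claimed.

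For part (2), I would work with the un-normalized quantities $J_{N,\alpha}(x):=\int_P(z-x)^\alpha\,d\mathcal{B}_x^N(z)$, so that $I_{N,\alpha}=N^{\|\alpha\|}J_{N,\alpha}$. The derivative $\partial_l J_{N,\alpha}$ splits naturally into two pieces: differentiation of the $x$-dependent integrand $(z-x)^\alpha$ contributes $-\alpha_l J_{N,\alpha-e_l}(x)$, while differentiation of the measure $d\mathcal{B}_x^N$ is handled by Lemma \ref{diffBNf} (applied coordinate-wise, with the $x$-dependent integrand temporarily frozen and then evaluated along the diagonal). Combining the two pieces and expanding $\ispa{K(x)(z-x),e_l}=\sum_k K(x)_{lk}(z_k-x_k)$ yields
\[
\partial_l J_{N,\alpha}(x)=-\alpha_l J_{N,\alpha-e_l}(x)+N\sum_k K(x)_{lk}\,J_{N,\alpha+e_k}(x).
\]

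To finish, I would contract with $A(x)_{jl}$ and sum over $l$. By \eqref{Adiff}--\eqref{Adiff2}, the left-hand side becomes $(D_j J_{N,\alpha})(x)$; on the right, the $K$-sum collapses to $J_{N,\alpha+e_j}(x)$ via $A(x)K(x)=I$ (Lemma \ref{InverseMatrix}). Solving for $NJ_{N,\alpha+e_j}$, multiplying through by $N^{\|\alpha\|}$ to convert back to the $I_{N,\cdot}$ quantities, and using the already-proved identity $N A(x)_{jl}=I_{N,e_l+e_j}(x)$ to absorb the remaining factor of $N$ in front of the lower-order term produces \eqref{requr} exactly. The main obstacle is purely bookkeeping: keeping the powers of $N$ straight during the $J\leftrightarrow I$ conversion, and being precise about the product-rule differentiation of the $N$-fold convolution so that the contributions from the $N$ measure factors telescope, via $\sum_s(w_s-x)=N(z-x)$, into the single $N K(x)(z-x)$ appearing in Lemma \ref{diffBNf}. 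Once this is clean the algebra is routine.
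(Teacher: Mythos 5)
Your proof is correct and follows essentially the same route the paper intends: the first three identities from the definition and Lemma \ref{repofA}, and the cubic identity and the recurrence \eqref{requr} by the ``direct computation'' of differentiating the integral, using Lemma \ref{diffBNf} for the derivative of the measure and $A(x)K(x)={\rm Id}$ (Lemma \ref{InverseMatrix}) to contract, exactly the H\"ormander-style differential-recurrence argument the paper alludes to. Your write-up merely fills in the bookkeeping (the frozen-integrand/diagonal device and the powers of $N$) that the paper leaves implicit, and it is sound.
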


\begin{proof}
First two of (1) are obvious by definition \eqref{functI}. 
The third formula of (1) follows from Lemma \ref{repofA}. 
The fourth formula in (1) and 
(2) follow from direct computations. 
\end{proof}

By Lemma \ref{recursion}, we have $D^{\beta}I_{N,\alpha} \in C(P) \cap C^{\infty}(P^{o})$ 
for any positive integer $N$ and multi-indices $\alpha$, $\beta$. Furthermore, we have the following. 
\begin{lemma}
\label{polyn}
For any $\alpha \in \mathbb{Z}_{\geq 0}^{m}$ and positive integer $N$, the function $I_{N, \alpha}$ 
is a polynomial in $N$ of degree $[\|\alpha\|/2]$ with coefficients in $C(P) \cap C^{\infty}(P^{o})$. 
More precisely, $I_{N,\alpha}$ is of the form
\begin{equation}
\label{polyN}
I_{N,\alpha}(x)=\sum_{l=0}^{[\|\alpha\|/2]}p_{\alpha,l}(x)N^{l}, 
\end{equation}
where $p_{\alpha,l} \in C(P) \cap C^{\infty}(P^{o})$ and $p_{\alpha,l}$ does not depend on $N$. 
Furthermore, $D^{\beta}p_{\alpha,l} \in C(P) \cap C^{\infty}(P^{o})$ for every $\beta$. 
In particular, we have $I_{N,\alpha}(x)=O(N^{[\|\alpha\|/2]})$ uniformly on $P$. 
\end{lemma}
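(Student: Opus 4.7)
The plan is to induct on $n = \|\alpha\|$ using the recurrence \eqref{requr} of Lemma \ref{recursion}. The base cases for $n \leq 2$ are read off from Lemma \ref{recursion}(1): $I_{N,0} \equiv 1$, $I_{N, e_j} \equiv 0$, and $I_{N, e_i + e_j}(x) = N\langle A(x)e_i, e_j\rangle$, where $A_{ij}(x) = \int_P(z_i - x_i)(z_j - x_j)\,d\mathcal{B}_x(z)$ lies in $C(P) \cap C^\infty(P^o)$ by \eqref{matrixA} and Definition \ref{Bmeasure}(1). So in each base case the polynomial form \eqref{polyN} holds with coefficients in $C(P)\cap C^\infty(P^o)$.

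For the inductive step, I would write $I_{N,\alpha} = \sum_l p_{\alpha,l}(x) N^l$ with $l$ ranging up to $[n/2]$, substitute into \eqref{requr} together with $I_{N, e_i+e_j} = N A_{ij}$, and collect powers of $N$ to obtain the coefficient identity
\[
p_{\alpha + e_j, l}(x) = D_j p_{\alpha, l}(x) + \sum_{i=1}^m \alpha_i\, p_{\alpha - e_i, l-1}(x)\, A_{ij}(x).
\]
The degree bound $l \leq [(n+1)/2]$ then follows from the two elementary inequalities $[n/2] \leq [(n+1)/2]$ and $[(n-1)/2] + 1 \leq [(n+1)/2]$, both valid for each parity of $n$.

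To propagate the smoothness of every $D^\beta$-derivative of the coefficients, I apply $D^\beta$ to the coefficient identity. The first term becomes $D^{\beta + e_j}p_{\alpha, l}$ by commutativity of the $D_j$'s (Lemma \ref{commute1}), and lies in $C(P) \cap C^\infty(P^o)$ by the inductive hypothesis. For each product $p_{\alpha - e_i, l-1}\, A_{ij}$, since $D_j = \sum_k A_{jk}(x)\,\partial_k$ is a first-order derivation and the $D_j$'s commute, the multi-index Leibniz rule
\[
D^\beta(fg) = \sum_{\beta' \leq \beta}\binom{\beta}{\beta'} D^{\beta'} f \cdot D^{\beta - \beta'} g
\]
is available; the inductive hypothesis applied to $p_{\alpha - e_i, l - 1}$ and to $A_{ij}$ (the latter viewed as $p_{e_i + e_j, 1}$, of weight $2 \leq n$) shows that every $D^{\beta'}$-derivative of both factors is in the algebra $C(P) \cap C^\infty(P^o)$, so each Leibniz summand, and hence the whole sum, is again in that algebra.

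The main obstacle is supplying the smoothness of $D^\beta A_{ij}$ to all orders, needed to seed the induction but not visible directly from the integral expression for $A$. The resolution is a parallel induction at $N = 1$: solving \eqref{requr} for $D_j I_{1,\gamma}$ expresses it as a polynomial combination of terms $I_{1, \gamma'}$ with $\|\gamma'\| \leq \|\gamma\| + 1$ and products thereof, and each $I_{1, \gamma}(x) = \int_P (z - x)^\gamma\,d\mathcal{B}_x(z)$ is a polynomial-in-$x$ combination of the integrals $B(z^\delta)(x)$, all already in $C(P) \cap C^\infty(P^o)$ by Definition \ref{Bmeasure}(1). This places every $D^\beta A_{ij}$ in $C(P)\cap C^\infty(P^o)$, closing the induction; the final uniform bound $\|I_{N,\alpha}\|_{C(P)} = O(N^{[\|\alpha\|/2]})$ is then immediate from continuity of the $p_{\alpha,l}$ on the compact set $P$.
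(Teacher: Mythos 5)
Your proof is correct and takes essentially the paper's own route: induction on $\|\alpha\|$ via the recurrence of Lemma \ref{recursion}, which yields exactly the coefficient recursion \eqref{polyn2}, from which the degree bound, the $N$-independence of the $p_{\alpha,l}$, and the uniform $O(N^{[\|\alpha\|/2]})$ estimate are read off. The only (minor) divergence is how the regularity of $D^{\beta}A_{ij}$ up to $\partial P$ is supplied: the paper reduces this to showing $D^{\beta}I_{1,\alpha}\in C(P)\cap C^{\infty}(P^{o})$ by explicit formulas expressing iterated $D_{j}$-derivatives of $B(f)$ through moment integrals, whereas you close the same point by noting that the $N=1$ case of \eqref{requr} keeps $D^{\beta}I_{1,\gamma}$ inside the algebra generated by the functions $I_{1,\gamma}$, each of which lies in $C(P)\cap C^{\infty}(P^{o})$ by expanding $(z-x)^{\gamma}$ and invoking Definition \ref{Bmeasure}(1) --- an equally valid, arguably tidier, way to seed the induction.
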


Lemma \ref{polyn} can easily be shown by induction on $k=\|\alpha\|$ with Lemma \ref{recursion}. 
Note that the functions $p_{\alpha,l}$ can be computed inductively. 
Indeed, by substituting \eqref{polyN} to \eqref{requr}, we have 
\begin{equation}
\label{polyn2}
I_{N,\alpha+e_{j}}(x)=\sum_{l=0}^{[(\|\alpha\|+1)/2]}
\left[
D_{j}p_{\alpha,l}(x)+\sum_{i=1}^{m}\alpha_{i}I_{1,e_{i}+e_{j}}(x)p_{\alpha-e_{i},l-1}(x)
\right]N^{l}. 
\end{equation}

\subsection{Proof of Theorem \ref{AsympExp}}
By using the Taylor expansion of $f$ around $x \in P$, 
the function $B_{N}(f)$ is represented as \eqref{approxBN}, \eqref{error}. 
The error term $S_{2n,N}$ in \eqref{approxBN} is estimated as \eqref{esterror}, 
and hence, by Lemma \ref{polyn}, we have $\|S_{2n,N}\|_{C(P)} =O(\|f\|_{C^{2n}(P)} N^{-n})$. 
Thus we have 
\[
B_{N}(f)(x)=\sum_{\|\alpha\| \leq 2n-1}\frac{f^{(\alpha)}(x)}{\alpha! N^{\|\alpha\|}}I_{N,\alpha}(x) +
O(\|f\|_{C^{2n}(P)}N^{-n}). 
\]
Then, by substituting \eqref{polyN} for the above, we have 
\[
\begin{split}
\sum_{\|\alpha\| \leq 2n-1}&
\frac{f^{(\alpha)}(x)}{\alpha! N^{\|\alpha\|}}
I_{N,\alpha}(x)\\
&=\sum_{\nu=0}^{n-1}\left(
\sum_{l=\nu}^{2\nu}\sum_{\|\alpha\|=l}
\frac{f^{(\alpha)}(x)}{\alpha!} 
p_{\alpha,l-\nu}(x)
\right)N^{-\nu} \\
&\hspace{2cm}+\sum_{\nu=n}^{2n-1}
\left(
\sum_{l=\nu}^{2n-1}
\sum_{\|\alpha\|=l}\frac{f^{(\alpha)}(x)}{\alpha!}
p_{\alpha,l-\nu}
\right)N^{-\nu}\\
&=\sum_{\nu=0}^{n-1}\left(
\sum_{l=\nu}^{2\nu}\sum_{\|\alpha\|=l}
\frac{f^{(\alpha)}(x)}{\alpha!} 
p_{\alpha,l-\nu}(x)
\right)N^{-\nu}
+O(\|f\|_{C^{2n-1}(P)}N^{-n}). 
\end{split}
\]
Therefore, if we set
\begin{equation}
\label{diffop}
L_{\nu}(x,\partial)=\sum_{l=\nu}^{2\nu}\sum_{\|\alpha\|=l}
p_{\alpha,l-\nu}(x)\frac{1}{\alpha!}\partial^{\alpha}, 
\end{equation}
we obtain 
\[
B_{N}(f)(x)=\sum_{\nu=0}^{n-1}N^{-\nu}L_{\nu}(x,\partial)f+O(\|f\|_{C^{2n}(P)}N^{-n}), 
\]
which shows the asymptotic expansion \eqref{expansion}. 
The formulas for the differential operators $L_{0}(x,\partial)$ and $L_{1}(x,\partial)$ are 
obtained from Lemma \ref{recursion} and \eqref{diffop}.  

Next, we show that the expansion \eqref{expansion} can be differentiated any number of times. 
First of all we note that, if $\partial_{j}B_{N}(f)$ has an asymptotic expansion of the form $\sum_{\nu \geq 0}N^{-\nu}g_{j,\nu}(x)$ 
locally uniformly in $x \in P^{o}$ for each $j=1,\ldots,m$, then it is easy to show that $g_{j,\nu}=\partial_{j}L_{\nu}(x,\partial)f$. 
Instead of using the partial differential operators $\partial_{j}$, we use the operators $D_{j}$ defined in \eqref{Adiff2}. 
From Lemma \ref{diffBNf}, we have 
\begin{equation}
\label{commutator2}
D_{j}B_{N}(f)=N(B_{N}(X_{j}f)-X_{j}B_{N}(f)),\quad 
\mbox{that is,}\quad D_{j}B_{N}=N[B_{N},X_{j}]. 
\end{equation}
Therefore, since $L_{0}(x,\partial)=1$, we have 
$D_{j}B_{N}(f) \sim \sum_{\nu \geq 0}N^{-\nu}[L_{\nu+1}(x,\partial),X_{j}]f$. 
This holds uniformly on $P$. From this we also have $D_{j}L_{\nu}(x,\partial)=[L_{\nu+1}(x,\partial),X_{j}]$. 
Next, assume that we have 
\begin{equation}
\label{expD2}
D^{\alpha}B_{N}(f) \sim \sum_{\nu \geq 0}N^{-\nu}D^{\alpha}L_{\nu}(x,\partial)f
\end{equation}
uniformly on $P$ for any $f \in C^{\infty}(P)$ and $\alpha \in \mathbb{Z}_{\geq 0}^{m}$ 
with $\|\alpha\| \leq k$ where $k$ is a fixed positive integer. 
Then, by \eqref{commutator2}, we have 
\[
D_{j}D^{\alpha}B_{N}(f)=D^{\alpha}D_{j}B_{N}(f)=ND^{\alpha}(B_{N}(X_{j}f)-X_{j}B_{N}(f)), 
\]
and hence 
\[
\begin{split}
D_{j}D^{\alpha}B_{N}(f) &\sim \sum_{\nu \geq 0}N^{1-\nu}(D^{\alpha}L_{\nu}(x,\partial)X_{j}-D^{\beta}X_{j}L_{\nu}(x,\partial))f\\
&=\sum_{\nu \geq 0}N^{-\nu}D^{\alpha}[L_{\nu+1}(x,\partial),X_{j}]f=\sum_{\nu \geq 0}N^{-\nu}D^{\alpha}D_{j}L_{\nu}(x,\partial)f. 
\end{split}
\]
Thus, by induction, the asymptotic expansion \eqref{expD2} holds for any $\alpha$ uniformly on $P$. 
Since $A(x)$ is invertible on $P^{o}$, we conclude the assertion.  \hfill $\square$
\vspace{10pt}

\begin{remark}
\label{remark10}
{\rm One may think that the theorem can be proved by using 
the method of stationary phase. Indeed, we have the formula
\begin{equation}
\label{intrep}
B_{N}(f)(x)=\frac{1}{(2\pi)^{m}}\int_{\mathbb{R}^{m}} \widehat{f}(\xi)\varphi_{N,x}(\xi)\,d\xi
=\left(
\frac{N}{2\pi}
\right)^{m} \int_{\mathbb{R}^{2m}} e^{-N\Phi(x,y,\xi)}f(y)\,dyd\xi, 
\end{equation}
where $\widehat {f}$ is the Fourier transform of an extension of $f \in C^{\infty}(P)$ to 
the whole space $\mathbb{R}^{m}$ as a compactly supported smooth function which 
is also denoted by $f$, and the function $\varphi_{N,x}$ is the characteristic function, 
\[
\varphi_{N,x}(\xi)=\int_{P}e^{iz\xi}\,d\mathcal{B}_{x}^{N}(z), 
\]
of the probability measure $d\mathcal{B}_{x}^{N}$. Since $d\mathcal{B}_{x}^{N}$ is 
defined by the $N$-th convolution power of the Bernstein measure $\mathcal{B}$, we have 
\[
\varphi_{N,x}(\xi)=\varphi (x,\xi/N)^{N},\quad \varphi (x,\xi):=\int_{P}e^{iz\xi}\,d\mathcal{B}_{x}(z). 
\]
Then, the function $\Phi (x,y,\xi)$ in the right hand side of \eqref{intrep} is given by 
\[
\Phi(x,y,\xi)=i\ispa{y,\xi}-\log \varphi (x,\xi). 
\]
Thus, one would find asymptotic expansion for $B_{N}(f)$ by using the method 
of stationary phase (for example, using Theorem 7.7.5 in \cite{Ho1}) with the phase function $\Phi (x,y,\xi)$. 
Note here that, in this computation using stationary phase method, 
one might not need to assume the condition (2) in Definition \ref{Bmeasure} for
the section $\mathcal{B}:P \to \mathcal{M}(P)$. 
Theorem 7.7.5 in \cite{Ho1} gives us 
an effective formula for each term of the expansion. However, it is not still quite easy to compute 
each term of the expansion. 
Instead of using the method of stationary phase, we used in the above proof a beautiful 
idea given by H\"{o}rmander (\cite{Ho2}) which gives a computable 
representation \eqref{polyN}, \eqref{polyn2}, \eqref{diffop} of each 
differential operator $L_{\nu}(x,\partial)$ when $\mathcal{B}$ is a Bernstein measure. }
\end{remark}

\subsection{Proof of Theorem \ref{char}}
\label{proof1}

We keep the notation described before the statement of Theorem \ref{char} in Section \ref{Bernstein}. 
First, we note that the functions $m_{S,c,\alpha}$, $\alpha \in S$ defined in \eqref{Scfunct} are 
continuous up to the boundary $\partial P$ of $P$. 
Before giving a proof of this fact, we describe the values of $m_{S,c,\alpha}$ at $x \in \partial P$. 
Let $K$ be a (relatively open) face of $P$. 
Let 
\[
X_{K}={\rm span}_{\mathbb{R}}\{\alpha -\beta\,;\,\alpha,\beta \in S \cap \overline{K}\}, 
\]
and let $X_{K}^{\perp} \subset \mathbb{R}^{m}$ be the annihilator of $X_{K}$. 
Then, define the map $\mu_{K}:X \to K$ by 
\begin{equation}
\label{momentF}
\mu_{K}(\tau):=\sum_{\alpha \in S \cap \overline{K}}
\frac{c(\alpha)e^{\ispa{\alpha,\tau}}}
{\sum_{\beta \in S \cap \overline{K}}c(\beta)e^{\ispa{\beta,\tau}}}\alpha,\quad 
\tau \in X. 
\end{equation}
If $\tau,\tau' \in X$ satisfy $\tau-\tau' \in X_{K}^{\perp}$, then $\mu_{K}(\tau)=\mu_{K}(\tau')$, 
and hence the above defines a map $\mu_{K}:X_{K} \cong \mathbb{R}^{m}/X_{K}^{\perp} \to K$, 
and it is a diffeomorphism from $X_{K}$ onto $K$ (\cite{Fu}). 
For any $u \in \mathbb{R}^{m}$, we define $\lambda (u)=\min_{y \in P}\ispa{y,u}$. 
We fix $u \in X_{K}^{\perp}$ satisfying
\[
\overline{K}=\{y \in P\,;\,\ispa{y,u}=\lambda(u)\}. 
\]
Then, for any $x=\mu_{K}(\tau) \in K$ with $\tau \in \mathbb{R}^{m}$, 
we have $\lim_{t \to +\infty}\mu_{S,c}(\tau-tu)=x$ and 
\begin{equation}
\label{boundaryV}
\begin{split}
m_{S,c,\alpha}(x)&=\lim_{t \to +\infty}m_{S,c,\alpha}(\mu_{S,c}(\tau-tu)) \\
&=
\left\{
\begin{array}{ll}
0 & \alpha \not \in S \cap \overline{K},\\
\frac{c(\alpha)e^{\ispa{\alpha,\tau}}}
{\sum_{\beta \in S \cap \overline{K}}c(\beta)e^{\ispa{\beta,\tau}}} & \alpha \in S \cap \overline{K}.   
\end{array}
\right.
\end{split}
\end{equation}

\begin{lemma}
\label{contimS}
The functions $m_{S,c,\alpha}(x)$ $(\alpha \in S)$ are continuous on $P$. 
\end{lemma}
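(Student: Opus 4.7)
The plan is to establish continuity separately on $P^o$ (where it follows immediately from the smoothness of $\tau_{S,c}$ and of the exponential ratios defining \eqref{Scfunct}) and at each boundary point $x_0 \in K$ for a relatively open proper face $K$ of $P$. At the boundary, the argument analyzes how $\tau_{S,c}(x)$ diverges as $x \to x_0$, combining a reduction to interior sequences with an inductive face-stratification.

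For the reduction, given $x_n \to x_0 \in K$, sequences lying in a single face $L$ can be treated intrinsically via the diffeomorphism $\mu_L : X_L \to L$ from \eqref{momentF}, so by induction on face dimension one may assume $x_n \in P^o$. Then $\tau_n := \tau_{S,c}(x_n)$ satisfies $|\tau_n| \to \infty$ since $\mu_{S,c}$ is a diffeomorphism onto $P^o$. After passing to a subsequence, assume $\tau_n/|\tau_n| \to v$ with $|v|=1$, and let $F$ be the face of $P$ on which $y \mapsto \langle y, v\rangle$ attains its maximum over $P$, with $S_v := S \cap \overline{F}$. For any $\alpha_0 \in S_v$ and $\alpha \in S \setminus S_v$ the inequality $\langle \alpha - \alpha_0, v\rangle < 0$ yields
\[
m_{S,c,\alpha}(x_n) \leq \frac{c(\alpha)}{c(\alpha_0)}\, e^{\langle \alpha - \alpha_0, \tau_n\rangle} \longrightarrow 0,
\]
and passing to the limit in the barycenter identity $x_n = \sum_\alpha m_{S,c,\alpha}(x_n)\alpha$ forces $x_0 \in \mathrm{conv}(S_v) = \overline{F}$, so $\overline{K} \subseteq \overline{F}$.

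Next I would decompose $\tau_n = \eta_n + \zeta_n$ with $\eta_n \in X_F$ and $\zeta_n \in X_F^\perp$. Since $\alpha - \beta \in X_F$ for all $\alpha, \beta \in S_v$, the relative weights among $\{m_{S,c,\alpha}(x_n)\}_{\alpha \in S_v}$ are determined by $\eta_n$ alone, and their renormalization gives $\mu_F(\eta_n) \to x_0$. If $\eta_n$ is bounded one extracts $\eta \in X_F$ with $\mu_F(\eta) = x_0 \in F$, forcing $K = F$; otherwise I would iterate the argument with $F$ in place of $P$ and $\eta_n/|\eta_n|$ in place of $\tau_n/|\tau_n|$, exposing a strictly smaller face of $P$. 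Since face dimensions decrease, the recursion terminates in finitely many steps, stabilizing precisely at the face containing $x_0$ in its relative interior, namely $K$, and yielding $\eta \in X_K$ with $\mu_K(\eta) = x_0$. The resulting limits of $m_{S,c,\alpha}(x_n)$ then agree with the values prescribed by \eqref{boundaryV}.

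The main obstacle is this iterated face-reduction: it requires careful bookkeeping of nested exposed faces, together with the proper diffeomorphism property of $\mu_L$ on each face $L$ to force the recursion to terminate exactly at $K$ rather than at some intermediate face, and to identify the limiting tangential parameter as $\mu_K^{-1}(x_0)$. A cleaner alternative would be to exhibit $\tau_{S,c}(x)$ directly, as $x \to x_0 \in K$, as the sum of a divergent $X_K^\perp$-component and a bounded $X_K$-component tending to $\mu_K^{-1}(x_0)$; but the recursive strategy appears to be the most transparent route.
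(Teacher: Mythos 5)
Your strategy can be made to work, but it is a genuinely different and considerably heavier route than the paper's, and the step you yourself flag as ``the main obstacle'' is exactly where the remaining work sits. The paper never introduces limit directions $\tau_n/|\tau_n|$, subsequences, or a nested-face recursion: it uses the target face $K$ from the outset. Choosing $u\in X_K^{\perp}$ with $\overline{K}=\{y\in P:\langle y,u\rangle=\lambda(u)\}$, $\lambda(u)=\min_{y\in P}\langle y,u\rangle$, and setting $c_K=\min_{\alpha\in S\setminus S_K}(\langle\alpha,u\rangle-\lambda(u))>0$ and $\chi_A(\tau)=\sum_{\alpha\in A}c(\alpha)e^{\langle\alpha,\tau\rangle}$, the single inequality
\[
0< c_K\,\frac{\chi_{S\setminus S_K}(\tau_n)}{\chi_{S_K}(\tau_n)+\chi_{S\setminus S_K}(\tau_n)}\;\le\;\langle x_n,u\rangle-\lambda(u)\;\longrightarrow\;0
\]
kills all the mass off $S_K$ at once, because $\langle x_n,u\rangle\to\langle x,u\rangle=\lambda(u)$ is already known from $x_n\to x\in K$. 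Writing $\tau_n=\xi_n+u_n$ with $\xi_n\in X_K$, $u_n\in X_K^{\perp}$, the same estimate gives $\mu_K(\xi_n)\to x$, and the diffeomorphism property of $\mu_K$ from \eqref{momentF} yields $\xi_n\to\mu_K^{-1}(x)$, hence convergence of $m_{S,c,\alpha}(x_n)$ to the values \eqref{boundaryV} in a few lines; this is precisely the ``cleaner alternative'' you mention in closing. By contrast, your route first identifies an auxiliary exposed face $F$ with $\overline{K}\subseteq\overline{F}$ from the limit direction $v$ and must then recurse through a strictly decreasing chain of faces whenever the tangential parameter is unbounded; to close that recursion you still have to verify, at every stage, that the renormalized barycenter over the current face tends to $x_0$ and that the exponentials discarded at earlier stages remain negligible relative to the new dominant set, so that the chain terminates exactly at the face whose relative interior contains $x_0$ and the limiting weights agree with \eqref{boundaryV}. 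Those verifications are routine and I believe they go through, but in your write-up they are asserted rather than carried out, and they are exactly the bookkeeping that the paper's choice of the exposing functional of $K$ itself makes unnecessary.
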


\begin{proof}
Let $K \subset \partial P$ be a relatively open face of $P$. Let $x_{n} \in P^{o}$ and $x \in K$ satisfy $x_{n} \to x$ as $n \to \infty$. 
We show that $m_{S,c,\alpha}(x_{n}) \to m_{S,c,\alpha}(x)$.
(For the case where $x_{n}$ is contained in a face $L \neq P^{o}$, one can discuss as in the following 
with replacing $P^{o}$ by $L$.) Let $\tau_{n}:=\tau_{S,c}(x_{n}) \in \mathbb{R}^{m}$. 
Take $u \in X_{K}^{\perp}$ such that $\overline{K}=\{y \in P\,;\,\ispa{y,u}=\lambda (u)\}$ with $\lambda (u)=\min_{y \in P}\ispa{y,u}$. 
For any $A \subset S$, we define $\chi_{A}(\tau)=\sum_{\alpha \in A}c(\alpha)e^{\ispa{\alpha,\tau}}$. 
We set $S_{K}:=S \cap \overline{K}$ and $c_{K}=\min_{\alpha \in S \setminus S_{K}} \{\ispa{\alpha,u}-\lambda (u)\}$. 
We note that $\ispa{\alpha,u}-\lambda(u)=0$ if and only if $\alpha \in S_{K}$, 
and hence $c_{K}>0$. Since $0 < \ispa{x_{n},u}-\lambda (u) \to 0$ as $n \to \infty$, we have 
\[
0 < c_{K}\frac{\chi_{S \setminus S_{K}}(\tau_{n})}
{\chi_{S_{K}}(\tau_{n})+\chi_{S \setminus S_{K}}(\tau_{n})} 
\leq \ispa{\mu_{S,c}(\tau_{n}),u}-\lambda (u) \to 0, 
\]
and which shows
\begin{equation}
\label{auxCo1}
\frac{\chi_{S \setminus S_{K}}(\tau_{n})}
{\chi_{S_{K}}(\tau_{n})+\chi_{S \setminus S_{K}}(\tau_{n})}  \to 0, \quad 
\frac{\chi_{S \setminus S_{K}}(\tau_{n})}{\chi_{S_{K}}(\tau_{n})} \to 0\quad (n \to \infty).
\end{equation}
From this, we have $\lim_{n \to \infty}m_{S,c,\alpha}(x_{n})=0=m_{S,c,\alpha}(x)$ when $\alpha \in S \setminus S_{K}$. 
Decompose $\tau_{n} \in \mathbb{R}^{m}$ according to the decomposition $\mathbb{R}^{m}=X_{K} \oplus X_{K}^{\perp}$ as $\tau_{n}=\xi_{n}+u_{n}$, 
$\xi_{n} \in X_{K}$, $u_{n} \in X_{K}^{\perp}$. 
Then, \eqref{auxCo1} also shows 
\[
\mu_{K}(\xi_{n})=\mu_{K}(\tau_{n})=
\sum_{\alpha \in S_{K}}
\frac{c(\alpha)e^{\ispa{\alpha,\tau_{n}}}}
{\chi_{S_{K}}(\tau_{n})}\alpha
\to x \quad (n \to \infty).  
\]
Since $\mu_{K}:X_{K} \to K$ is a diffeomorphism, we take $\xi \in X_{K}$ such that $\mu_{K}(\xi)=x$. 
Then, the above means $\lim_{n \to \infty}\mu_{K}(\xi_{n})=\mu_{K}(\xi)$, and hence we have $\lim_{n \to \infty}\xi_{n}=\xi$. 
From this we have, for $\alpha \in S_{K}$,  
\[
m_{S,c,\alpha}(x_{n})=
\frac{c(\alpha)e^{\ispa{\alpha,\xi_{n}}}}
{\chi_{S_{K}}(\xi_{n})}(1+o(1)) \to 
\frac{c(\alpha)e^{\ispa{\alpha,\xi}}}
{\chi_{S_{K}}(\xi)}=m_{S,c,\alpha}(x)
\]
as $n \to \infty$, which shows the assertion. 
\end{proof}

\begin{remark}
\label{remarkFIX}
{\rm For the original Bernstein polynomials $B_{N}(f)$ defined in \eqref{original}, one has $B_{N}(f)(0)=f(0)$, $B_{N}(f)(1)=f(1)$. 
Our Bernstein approximations $B_{N}(f)$ defined by a finitely supported Bernstein measure $\mathcal{B}_{S,c}$ also 
have similar property. In fact, by \eqref{boundaryV}, if $x$ is a vertex of $P$, 
one has $m_{S,c,x}(x)=1$, $m_{S,c,\alpha}(x)=0$ when $\alpha \neq x$. 
This combined with \eqref{coeff} shows that $B_{N}(f)(x)=f(x)$ when $x$ is a vertex of $P$. }
\end{remark}

Now, we define the function $\delta_{S,c} \in C^{\infty}(P^{o})$ by 
\begin{equation}
\label{LegendreD}
\delta_{S,c}(x)=
\log\left(
\sum_{\alpha \in S}c(\alpha)e^{\ispa{\alpha,\tau_{S,c}(x)}}
\right)
-\ispa{x,\tau_{S,c}(x)}, 
\end{equation}
where, as in Section \ref{Bernstein}, 
$\tau_{S,c}:P^{o} \to \mathbb{R}^{m}$ is the inverse map of the diffeomorphism $\mu_{S,c}:\mathbb{R}^{m} \to P^{o}$. 
Then, we have 
\begin{equation}
\label{repofmSc}
m_{S,c,\alpha}(x)=c(\alpha)e^{-\delta_{S,c}(x)+\ispa{\alpha-x,\tau_{S,c}(x)}}. 
\end{equation}

\vspace{10pt}

{\sc Completion of proof of Theorem \ref{char}.}\hspace{5pt}
As in \cite{TZ}, we have $\nabla \delta_{S,c}(x)=-\tau_{S,c}(x)$ 
and $\nabla^{2}\delta_{S,c}(x)=-A_{S,c}(x)^{-1}$ for $x \in P^{o}$,  
where $A_{S,c}$ is defined by 
\begin{equation}
\label{ASc}
A_{S,c}(x)=\sum_{\alpha \in S}m_{S,c,\alpha}(x)(\alpha -x)\otimes (\alpha -x), 
\end{equation}
which is non-degenerate on $P^{o}$ (see \cite{TZ}). 
Then, it is not hard to show directly that the measure \eqref{fB} defines a Bernstein measure on $P$ 
with the defining matrix $A_{S,c}(x)^{-1}$. 

Next, let us prove the converse. Let $\mathcal{B}:P \to \mathcal{M}(P)$ be a finitely supported Bernstein measure, 
and let $S ={\rm supp}(\mathcal{B}(x))$ for some, and any $x \in P^{o}$. Let $K:P^{o} \to {\rm Sym}(m,\mathbb{R})$ 
be the defining matrix of $\mathcal{B}$. 
We write $\mathcal{B}(x)=\sum_{\alpha \in S}m_{\alpha}(x)\delta_{\alpha}$, $x \in P$. 
Then, by Lemma \ref{InverseMatrix}, the matrix $K(x)$ is non-degenerate. 
Now, fix an arbitrary $b^{*} \in P^{o}$ and $V>0$.  
Define the function $\delta_{b^{*}} \in C^{\infty}(P^{o})$ and 
the map $\tau_{b^{*}}:P^{o} \to \mathbb{R}^{m}$ by 
\[
\begin{gathered}
\tau_{b^{*}}(x)=\int_{0}^{1}K(b^{*}+s(x-b^{*}))(x-b^{*})\,ds,\\
\delta_{b^{*}}(x)=\log V -\int_{0}^{1}
\ispa{\tau_{b^{*}}(b^{*}+s(x-b^{*})),x-b^{*}}\,ds. 
\end{gathered}
\]
Lemma \ref{commute2} shows that $(d\tau_{b^{*}})_{x}=K(x)$, $\nabla \delta_{b^{*}}(x)=-\tau_{b^{*}}(x)$, $x \in P^{o}$. 
Since $m_{\alpha}(x)>0$ on $P^{o}$, we define $g_{\alpha}(x)=\log m_{\alpha}(x) -\ispa{\alpha-x,\tau_{b^{*}(x)}}$. 
Then, since $\nabla g_{\alpha}=\tau_{b^{*}}$, there exists a constant $c(\alpha)>0$ such that 
$g_{\alpha} =-\delta_{b^{*}}+\log c(\alpha)$ for each $\alpha \in S$ and hence 
\begin{equation}
\label{repofm}
m_{\alpha}(x)=c(\alpha)e^{-\delta_{b^{*}}(x)+\ispa{\alpha-x,\tau_{b^{*}}(x)}}. 
\end{equation} 
Taking the sum of \eqref{repofm} over $\alpha \in S$, we have 
\[
\delta_{b^{*}}(x)=
\log \left(
\sum_{\alpha \in S}c(\alpha)e^{\ispa{\alpha,\tau_{b^{*}}(x)}}
\right)
-\ispa{x,\tau_{b^{*}}(x)}.
\]
Since $\tau_{b^{*}}(b^{*})=0$, 
we have $m_{\alpha}(b^{*})=c(\alpha)/V$.  
It follows from this and the identity $\sum_{\alpha \in S}m_{\alpha}(b^{*})\alpha=b^{*}$ that 
\[
V=\sum_{\alpha \in S}c(\alpha),\quad 
b^{*}=\frac{1}{V}\sum_{\alpha \in S}c(\alpha)\alpha. 
\]
From these formulas, we obtain $\delta_{b^{*}}(b^{*})=\log V$. 
It is not hard to show that the map $\tau_{b^{*}}$ is injective and hence is a diffeomorphism 
from $P^{o}$ to the image of $\tau_{b^{*}}$. (Note that the differential of $\tau_{b^{*}}$ is 
$K$, which is positive definite on $P^{o}$.) 
Denote its inverse by $\mu_{b^{*}}:{\rm Im}(\tau_{b^{*}}) \to P^{o}$. 
For any $x \in P^{o}$ and $\tau \in {\rm Im}(\tau_{b^{*}})$, we define $f_{\tau}(x)=\ispa{x,\tau}+\delta_{b^{*}}(x)$. 
Then, we have $\nabla f_{\tau}(x)=\tau -\tau_{b^{*}}(x)$ and $\nabla^{2}f_{\tau}(x)=-K(x)$. 
Thus, the point $x=\mu_{b^{*}}(\tau)$ is a unique critical point of $f_{\tau}$,  
and $f_{\tau}$ attains its maximum there. Then, we obtain 
\[
f_{\tau}(x) \leq f_{\tau}(\mu_{b^{*}}(\tau))=
\log \chi_{S,c}(\tau),
\]
where the function $\chi_{S,c}$, defined by 
\begin{equation}
\label{potential}
\chi_{S,c}(\tau)=
\sum_{\alpha \in S}c(\alpha)e^{\ispa{\alpha,\tau}}, 
\end{equation}
depends only on the constants $c(\alpha)>0$ and the set $S$. 
In the above inequality, the equality holds if and only if $x=\mu_{b^{*}}(\tau)$. 
Therefore, we obtain 
\[
\delta_{b^{*}}(x) \leq \log \chi_{S,c}(\tau) -\ispa{x,\tau}
\]
for every $\tau \in {\rm Im} (\tau_{b^{*}})$ and equality holds if and only if $\tau =\tau_{b^{*}}(x)$. 
Then, as in \cite{TZ}, we obtain 
\[
\delta_{S,c}(x) =\min_{\tau \in \mathbb{R}^{m}}
\left(
\log \chi_{S,c}(\tau)-\ispa{x,\tau}
\right) \leq \delta_{b^{*}}(x). 
\]
Since ${\rm Im}(\tau_{b^{*}}) \subset \mathbb{R}^{m}$ is open, the point $\tau=\tau_{b^{*}}(x)$ is a local minimum 
of the function $\tau \mapsto \log \chi_{S,c}(\tau)-\ispa{x,\tau}$ on $\mathbb{R}^{m}$. 
Since, the local minimum of this function is unique, and it is given by $\tau=\tau_{S,c}(x)$, 
we conclude $\tau_{b^{*}}(x)=\tau_{S,c}(x)$ for every $x \in P^{o}$. This completes the proof. \hfill $\square$

\begin{remark}
\label{remark33}
{\rm In the proof of the converse direction of Theorem \ref{char} above, there are no restriction for 
the weight function $c:S \to \mathbb{R}_{>0}$, which is defined as an integral constant. 
This is because any choice of the weight function $c:S \to \mathbb{R}_{>0}$ can define a Bernstein measure. 
However, two different weight functions might give the same Bernstein measure. 
In fact, when $P$ is a standard simplex and $S=P \cap \mathbb{Z}^{m}$, the set of vertices of the simplex $P$, 
any choice of the weight function produce the Bernstein measure given in Example \ref{example1}.  
In general, let $A \subset P$ be a finite set whose convex hull is $P$. 
Let $\Delta_{A} \subset \mathcal{M}(P)$ denote the convex hull of the Dirac measures $\delta_{\alpha}$ with $\alpha \in A$. 
Then, the image of the finitely supported Bernstein measures $\mathcal{B}:P \to \mathcal{M}(P)$ with the support $S$ 
are contained in $\Delta_{S}$, and the image of the dilated convolution powers $d\mathcal{B}_{x}^{N}$ 
are contained in $\Delta_{\frac{1}{N}S_{N}}$. 
When $P$ is a standard $m$-dimensional simplex and $S=P \cap \mathbb{Z}^{m}$, $\Delta_{S}$ is also a simplex with 
the same dimension, and the restriction of the barycenter map to $\Delta_{S}$ is a diffeomorphism between $\Delta_{S}$ and $P$. 
Hence, in this case, there are only one section of the barycenter map whose support (in the sense of Definition \ref{defSupport}) is $S$. }
\end{remark}

\subsection{Large deviations principle}
\label{LargeDP}

In this subsection, we discuss the large deviations principle 
for finitely supported Bernstein measures. 
Throughout this subsection, let $\mathcal{B}=\mathcal{B}_{S,c}:P \to \mathcal{M}(P)$ denote 
the finitely supported Bernstein measure with the support $S \subset P$ and the weight $c:S \to \mathbb{R}_{>0}$. 
For each $\alpha \in S$, the coefficient of $\mathcal{B}$ is denoted by $m_{S,c,\alpha}(x)$ defined in \eqref{Scfunct}.

For any $x \in P$, we define a function $\chi_{x}=\chi_{S,c,x}$ by 
\begin{equation}
\label{potentialX}
\chi_{x}(\tau):=\sum_{\alpha \in S}m_{S,c,\alpha}(x)e^{\ispa{\tau,\alpha}}
=\int_{P}e^{\ispa{\tau,z}}\,d\mathcal{B}_{x}(z),\quad \tau \in \mathbb{R}^{m}. 
\end{equation}
By using the function $\chi_{S,c}$ defined in \eqref{potential}, we find that 
\begin{equation}
\label{potentialXF}
\chi_{x}(\tau)=\frac{\chi_{S,c}(\tau_{S,c}(x)+\tau)}{\chi_{S,c}(\tau_{S,c}(x))}. 
\end{equation}

\begin{lemma}
We fix $x \in P$. Then the dilated convolution powers $d\mathcal{B}_{x}^{N}$ defined in \eqref{convolution} 
from the finitely supported Bernstein measure $\mathcal{B}=\mathcal{B}_{S,c}$ 
satisfy the large deviations principle with the speed $N$ and the good rate function given by 
\begin{equation}
\label{rateF}
I^{x}(y)=\sup_{\tau \in \mathbb{R}^{m}}
\{\ispa{y,\tau}-\Lambda^{x} (\tau)\},\quad 
\Lambda^{x}(\tau)=\log \chi_{x}(\tau). 
\end{equation}
\end{lemma}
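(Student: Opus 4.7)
The plan is to reduce the assertion to Cramér's large deviations theorem in $\mathbb{R}^{m}$ in its sharpest (finite cumulant generating function) form.

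First I would unwind the construction to see that $d\mathcal{B}_{x}^{N}$ is an empirical-mean distribution. By \eqref{convolution} and the explicit formula \eqref{remark4}, if $Z_{1},\ldots,Z_{N}$ are i.i.d.\ $\mathbb{R}^{m}$-valued random vectors with common law
\[
\mathcal{B}(x)=\sum_{\alpha\in S}m_{S,c,\alpha}(x)\delta_{\alpha},
\]
then $d\mathcal{B}_{x}^{N}$ is exactly the distribution of the empirical mean $\bar{Z}_{N}=(Z_{1}+\cdots+Z_{N})/N$. Its log moment generating function is
\[
\log\mathbb{E}\bigl[e^{\langle \tau,\bar{Z}_{N}\rangle}\bigr]=N\,\log\mathbb{E}\bigl[e^{\langle \tau/N,Z_{1}\rangle}\bigr]=N\,\Lambda^{x}(\tau/N),
\]
with $\Lambda^{x}(\tau)=\log\chi_{x}(\tau)$ as defined by \eqref{potentialX}; this identifies $\Lambda^{x}$ as precisely the cumulant generating function of the single draw.

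Next I would verify that the hypotheses of Cramér's theorem are satisfied in the strongest possible way. Since $S$ is finite and contained in the compact polytope $P$, the map $\tau\mapsto\chi_{x}(\tau)$ is a finite sum of exponentials and hence real-analytic on all of $\mathbb{R}^{m}$; in particular $\Lambda^{x}$ is everywhere finite, convex, and essentially smooth (steep). Under this hypothesis, the finite-dimensional Cramér theorem (see, e.g., Dembo--Zeitouni, Theorem 2.2.30) yields that the laws of $\bar{Z}_{N}$ satisfy the large deviations principle with speed $N$ and good rate function equal to the Legendre--Fenchel transform of $\Lambda^{x}$, which is exactly
\[
I^{x}(y)=\sup_{\tau\in\mathbb{R}^{m}}\{\langle y,\tau\rangle-\Lambda^{x}(\tau)\}.
\]

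Finally I would record the goodness of $I^{x}$. Because $d\mathcal{B}_{x}^{N}$ is supported in the compact convex hull of $\mathrm{supp}(\mathcal{B}(x))\subset P$, the rate function $I^{x}$ is $+\infty$ off this compact set and is lower semicontinuous (being a supremum of affine functions), so its sublevel sets are compact. There is no genuine obstacle in this argument; the only point to check carefully is that compact support of $\mathcal{B}(x)$ makes $\Lambda^{x}$ finite everywhere so that the favorable case of Cramér's theorem applies verbatim, and the computation of the Legendre transform matches \eqref{rateF} by construction.
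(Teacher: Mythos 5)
Your argument is correct and is essentially the paper's own proof: both identify $d\mathcal{B}_{x}^{N}$ as the law of the empirical mean of i.i.d.\ draws from $\mathcal{B}(x)$ and invoke Cram\'{e}r's theorem from Dembo--Zeitouni, with $\Lambda^{x}=\log\chi_{x}$ as the cumulant generating function. Your additional checks (finiteness of $\Lambda^{x}$ from the finite support, goodness of the rate function via compactness of $P$) are just an explicit verification of the hypotheses the paper leaves implicit.
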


\begin{proof}
Consider the infinite product of $d\mathcal{B}_{x}$ 
on the infinite product space $\Omega =P \times \cdots \times  P \times \cdots$. 
Let $X_{j,x}:\Omega \to P$ denote the projection in the $j$-th coordinate. 
Then $X_{j,x}$'s form a sequence of independent identically distributed 
random vectors with the distribution $d\mathcal{B}_{x}$. 
Then, the distribution of the empirical means $S_{N,x}:=\frac{1}{N}\sum_{j=1}^{N}X_{j,x}$ is given 
by the dilated convolution powers $d\mathcal{B}_{x}^{N}$. Thus the assertion follows from Cram\'{e}r's theorem (\cite{DZ}). 
\end{proof}

Next, we describe the rate function \eqref{rateF} by using \eqref{momentF}, \eqref{boundaryV}. 
To describe the rate function \eqref{rateF} more concretely, we need the fact that 
the function $\delta_{S,c}$ in \eqref{LegendreD} is continuous on $P$. 
Let $K$ be a (relatively open) face of the polytope $P$. 
Let $\tau_{K}:K \to X_{K}$ be the inverse of the map $\mu_{K}:X_{K} \to K$ defined in \eqref{momentF}. 
When $K=P^{o}$, we have $\mu_{P^{o}}=\mu_{S,c}$ and $\tau_{P^{o}}=\tau_{S,c}$. 
As in the proof of Lemma \ref{contimS}, for each $A \subset S$, we set $\chi_{A}(\tau)=\sum_{\alpha \in A}c(\alpha)e^{\ispa{\alpha,\tau}}$ 
and $S_{K}=S \cap \overline{K}$. 
Define a function $\delta_{K}$ on $K$ by 
\begin{equation}
\label{LegendreDK}
\delta_{K}(x)=\log \chi_{S_{K}}(\tau_{K}(x))-\ispa{x,\tau_{K}(x)},\quad x \in K. 
\end{equation}
Note that, we have $\delta_{P^{o}}=\delta_{S,c}$. 

\begin{lemma}
\label{contiD}
The function $\delta_{S,c}$ is continuous on $P$, and its restriction to each face $K$ is given by $\delta_{K}$. 
\end{lemma}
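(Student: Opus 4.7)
The plan is to write $\delta_{S,c}$ as a manifestly continuous function of the coefficients $m_{S,c,\alpha}$, after which Lemma \ref{contimS} immediately yields continuity on $P$. The key identity is the entropy representation
\[
\delta_{S,c}(x) = -\sum_{\alpha \in S} m_{S,c,\alpha}(x)\log\bigl(m_{S,c,\alpha}(x)/c(\alpha)\bigr), \qquad x \in P^o,
\]
which follows from taking logarithms of \eqref{repofmSc}, multiplying by $-m_{S,c,\alpha}(x)$, summing over $\alpha \in S$, and invoking $\sum_\alpha m_{S,c,\alpha}(x) = 1$ and $\sum_\alpha m_{S,c,\alpha}(x)\alpha = x$ to cancel the $\delta_{S,c}$- and $\tau_{S,c}$-dependent terms.

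Since each $m_{S,c,\alpha}$ is continuous on $P$ by Lemma \ref{contimS} and $t \mapsto t\log(t/c(\alpha))$ extends continuously to $t = 0$ via $0\log 0 = 0$, the right-hand side above defines a continuous function on $P$, which by the identity above extends $\delta_{S,c}$ continuously from $P^o$ to $P$. This establishes the first assertion.

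For the restriction to a face $K$, the idea is to substitute \eqref{boundaryV} into the entropy formula. For $x \in K$ the coefficients $m_{S,c,\alpha}(x)$ vanish for $\alpha \in S \setminus S_K$ and equal $c(\alpha)e^{\langle\alpha,\tau\rangle}/\chi_{S_K}(\tau)$ for $\alpha \in S_K$, where $\tau$ is any representative of $\tau_K(x) \in X_K$. The sum then collapses by the same averaging trick --- using $\sum_{\alpha \in S_K} m_{S,c,\alpha}(x) = 1$ and $\sum_{\alpha \in S_K} m_{S,c,\alpha}(x)\alpha = x$ (both inherited from \eqref{bsect} since the omitted $m_{S,c,\alpha}(x)$ vanish) --- to $\log\chi_{S_K}(\tau) - \langle x, \tau\rangle = \delta_K(x)$. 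The argument is purely computational and I do not anticipate any real obstacle; the main observation is that the entropy identity eliminates all explicit dependence on $\tau_{S,c}$, reducing continuity of $\delta_{S,c}$ to that of the already-continuous coefficients $m_{S,c,\alpha}$.
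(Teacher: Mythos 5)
Your proof is correct, and it takes a genuinely different route from the paper. You exploit the relative-entropy (Legendre) identity $\delta_{S,c}(x)=-\sum_{\alpha\in S}m_{S,c,\alpha}(x)\log\bigl(m_{S,c,\alpha}(x)/c(\alpha)\bigr)$, which indeed follows from \eqref{repofmSc} together with $\sum_\alpha m_{S,c,\alpha}=1$ and \eqref{bsect}; since each $m_{S,c,\alpha}$ is continuous on $P$ with values in $[0,1]$ (Lemma \ref{contimS}) and $t\mapsto t\log(t/c(\alpha))$ extends continuously by $0\log 0=0$, continuity of the extension is immediate, and the boundary identification reduces to the same averaging computation applied to \eqref{boundaryV} over $S_K$ (your well-definedness remark about the representative of $\tau_K(x)$ modulo $X_K^{\perp}$ is the only point needing the observation that $\ispa{\alpha,u}$ is constant on $S_K$ and equals $\ispa{x,u}$ for $u\in X_K^{\perp}$, $x\in\overline{K}$). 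The paper instead argues by hand: it first computes the limit of $\delta_{S,c}$ along the rays $\mu_{S,c}(\tau_K(x)-tu)$ using the exponential estimates \eqref{momentEXP}--\eqref{auxCONTI}, and then treats an arbitrary sequence $x_n\to x\in K$ by decomposing $\tau_{S,c}(x_n)=\xi_n+u_n$ and reusing \eqref{auxCo1} and the convergence $\xi_n\to\xi$ from the proof of Lemma \ref{contimS}. Your argument is shorter and conceptually cleaner, deriving both assertions at once from the already-proved continuity of the coefficients and avoiding any new asymptotic estimates; it also gives Remark \ref{deltaBV} for free by applying the same identity to the subfamily $S_K$ on $\overline{K}$. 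What the paper's direct computation buys in exchange is quantitative information: explicit exponential rates of convergence of $\delta_{S,c}$ toward its boundary values, which the entropy identity by itself does not provide.
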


\begin{proof}
As in the proof of Lemma \ref{contimS}, take $u \in X_{K}^{\perp}$ such that $\overline{K}=\{y \in P\,;\,\ispa{y,u}=\lambda(u)\}$ 
with $\lambda (u)=\min_{z \in P}\ispa{z,u}$. 
Then, first of all, we claim that, for $x \in K$, the following holds: 
\begin{equation}
\label{limit1}
\lim_{t \to +\infty}\delta_{S,c}(\mu_{S,c}(\tau_{K}(x)-tu))=\delta_{K}(x). 
\end{equation}
Let us prove \eqref{limit1}. It is easy to show that, for any $x \in K$, 
\begin{equation}
\label{momentEXP}
|\ispa{\mu_{S,c}(\tau_{K}(x)-tu)-x},u| \leq Ce^{-tc_{K}}\frac{\chi_{S \setminus S_{K}}(\tau_{K}(x))}{\chi_{S_{K}}(\tau_{K}(x))}, 
\end{equation}
where we set $c_{K}:=\min_{\alpha \in S \setminus S_{K}}\ispa{\alpha,u}-\lambda(u)$. 
For any $\xi \in X_{K}$, we set 
\begin{equation}
\label{errorEXP}
0<R_{K}(t,\xi):=\sum_{\alpha \in S \setminus S_{K}}c(\alpha)e^{\ispa{\alpha,\xi}-t(\ispa{\alpha,u}-\lambda(u))} 
\leq e^{-tc_{K}}\chi_{S \setminus S_{K}}(\xi). 
\end{equation}
Since $x \in K$, we have $\ispa{x,u}=\lambda(u)$, and hence 
\begin{equation}
\label{auxCONTI}
\begin{split}
\delta_{S,c}&(\mu_{S,c}(\tau_{K}(x)-tu)) \\
=&\log \chi_{S_{K}}(\tau_{K}(x))-\ispa{\mu_{S,c}(\tau_{K}(x)-tu),\tau_{K}(x)}+\\
&+t\ispa{\mu_{S,c}(\tau_{K}(x)-tu)-x,u}+
\log(1+R_{K}(t,\tau_{K}(x))/\chi_{S_{K}}(\tau_{K}(x))).  
\end{split}
\end{equation}
Now, by \eqref{momentEXP}, \eqref{errorEXP} and the fact that $\mu_{S,c}(\tau_{K}(x)-tu)$ tends to $x \in K$ as $t \to +\infty$, 
the right hand side of \eqref{auxCONTI} converges to $\delta_{K}(x)$, which shows \eqref{limit1}. 

Next, we take $x \in K$, and $\{x_{n}\} \subset P^{o}$ such that $x_{n} \to x$ as $n \to \infty$. 
Let $\xi=\tau_{K}(x)$ and $\tau_{n}=\xi_{n}+u_{n}=\tau_{S,c}(x_{n})$ with $\xi_{n} \in X_{K}$, $u_{n} \in X_{K}^{\perp}$. 
In the proof of Lemma \ref{contimS}, we have proved that $\xi_{n} \to \xi$. Let $p_{K}(n)=\ispa{z,u_{n}}$ with $z \in \overline{K}$, which does 
not depend on the choice of $z \in \overline{K}$. 
We note that $\chi_{S_{K}}(\xi_{n}+u_{n})=e^{p_{K}(n)}\chi_{S_{K}}(\xi_{n})$. 
From this and \eqref{auxCo1}, for each $\alpha \in S \setminus S_{K}$, we have 
\[
c(\alpha)e^{\ispa{\alpha,\xi_{n}}+\ispa{\alpha,u_{n}}-p_{K}(n)} \leq e^{-p_{K}(n)}\chi_{S\setminus S_{K}}(\xi_{n}+u_{n})\to 0,  
\]
and hence $\ispa{\alpha,u_{n}}-p_{K}(n) \to -\infty$ as $n \to \infty$ for each $\alpha \in S \setminus S_{K}$. 
Now, by using this fact, we have 
\[
\begin{split}
&-\ispa{x_{n},u_{n}}+\log \chi_{S,c}(\tau_{n}) \\
=&\log \chi_{S_{K}}(\xi_{n})-\sum_{\alpha \in S \setminus S_{K}}
\frac{c(\alpha)e^{\ispa{\alpha,\tau_{n}}}}{\chi_{S,c}(\tau_{n})}(\ispa{\alpha,u_{n}}-p_{K}(n))+o(1)
\to \log \chi_{S_{K}}(\xi). 
\end{split}
\]
In the above, one can compute the term $\ispa{x_{n},u_{n}}$ by using the relation $x_{n}=\mu_{S,c}(\xi_{n}+u_{n})$ and 
the definition \eqref{momentM} of the map $\mu_{S,c}$. 
Since $\delta_{S,c}(x_{n})=-\ispa{x_{n},\xi_{n}+u_{n}}+\log \chi_{S,c}(\tau_{n})$, we conclude that $\delta_{S,c}(x_{n}) \to \delta_{K}(x)$. 
In case where $\{x_{n}\}$ is contained in a face $L \neq P^{o}$, one can 
discuss in the same way as above with replacing $P^{o}$ by $L$ to conclude the assertion. 
\end{proof}

\begin{remark}
\label{deltaBV}
{\rm The function $\delta_{K}$ on the face $K$ defined in \eqref{LegendreDK} is continuous on $\overline{K}$. 
This can be shown in the same way as in the above proof. The restriction of $\delta_{K}$ on a (relatively open) face $L$ of $K$ 
is given by $\delta_{L}$. }
\end{remark}

\begin{proposition}
\label{LDP}
Let $K$ be a relatively open face of $P$. Let $x \in K$. 
Then, the rate function $I^{x}(y)$ is given by the following: 
\begin{equation}
\label{ConcreteRF}
I^{x}(y)=
\left\{
\begin{array}{ll}
+\infty & y \not \in \overline{K},\\
\delta_{K}(x)-\delta_{K}(y)+\ispa{x-y,\tau_{K}(x)} & y \in \overline{K}.  
\end{array}
\right.
\end{equation}
\end{proposition}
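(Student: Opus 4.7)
The plan is to compute the Legendre transform in \eqref{rateF} explicitly by first rewriting the generating function $\chi_x(\tau)$ in terms of the face data attached to $K$, and then analyzing the resulting Legendre transform of $\log\chi_{S_K}$ via the orthogonal splitting $\mathbb{R}^m = X_K \oplus X_K^\perp$.

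First, for $x\in K$ the boundary-value formula \eqref{boundaryV} forces $m_{S,c,\alpha}(x)=0$ when $\alpha\in S\setminus S_K$, so that the sum in \eqref{potentialX} runs only over $S_K$. Inserting the formula $m_{S,c,\alpha}(x)=c(\alpha)e^{-\delta_K(x)+\langle\alpha-x,\tau_K(x)\rangle}$, which follows by combining \eqref{boundaryV} with \eqref{LegendreDK}, yields
\[
\chi_x(\tau) \;=\; e^{-\delta_K(x)-\langle x,\tau_K(x)\rangle}\,\chi_{S_K}\!\bigl(\tau_K(x)+\tau\bigr).
\]
Substituting this into \eqref{rateF} and changing variables $\sigma = \tau_K(x)+\tau$ converts the rate function into
\[
I^x(y) \;=\; \delta_K(x) + \langle x-y,\tau_K(x)\rangle + J(y),\qquad J(y) := \sup_{\sigma\in\mathbb{R}^m}\bigl\{\langle y,\sigma\rangle - \log\chi_{S_K}(\sigma)\bigr\}.
\]

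Next I would compute $J(y)$ by writing $\sigma=\xi+u$ with $\xi\in X_K$ and $u\in X_K^\perp$. Since every $\alpha\in S_K$ lies in the same translate of $X_K$, the quantity $p_K(u):=\langle\alpha,u\rangle$ is independent of the choice of $\alpha\in S_K$, giving the factorization $\chi_{S_K}(\xi+u) = e^{p_K(u)}\chi_{S_K}(\xi)$. Hence
\[
J(y) = \sup_{\xi\in X_K}\bigl\{\langle y,\xi\rangle-\log\chi_{S_K}(\xi)\bigr\} + \sup_{u\in X_K^\perp}\bigl\{\langle y,u\rangle-p_K(u)\bigr\}.
\]
The second supremum equals $0$ exactly when $y$ lies in the affine span of $\overline{K}$ (because $p_K(u)=\langle z,u\rangle$ for any $z\in\overline{K}$) and $+\infty$ otherwise. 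For the first supremum, $\log\chi_{S_K}$ is strictly convex on $X_K$ modulo $X_K^\perp$ with gradient equal to the moment map $\mu_K:X_K\to K$; for $y\in K$ the unique critical point $\xi=\tau_K(y)$ is a global maximum, and by \eqref{LegendreDK} the value there is $-\delta_K(y)$. For $y$ in the affine span of $\overline{K}$ but outside $\overline{K}$, the image of $\mu_K$ being $K$ and the convex hull of $S_K$ being $\overline{K}$ force the first supremum to be $+\infty$ by standard convex duality. Finally, for $y\in\overline{K}\setminus K$, which lies on some proper face $L$ of $\overline{K}$, the lower semicontinuity of the Legendre transform together with Remark \ref{deltaBV} (continuity of $\delta_K$ on $\overline{K}$ together with $\delta_K|_L=\delta_L$) shows that the first supremum still equals $-\delta_K(y)=-\delta_L(y)$. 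Combining all cases with the displayed formula for $I^x(y)$ gives \eqref{ConcreteRF}.

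The main obstacle is the boundary analysis of the last step: extending the identity ``Legendre transform of $\log\chi_{S_K}|_{X_K}$ equals $-\delta_K(y)$'' from the open face $K$, where it is immediate from the inverse function $\tau_K=\mu_K^{-1}$, all the way up to $\overline{K}$ and simultaneously ruling out points outside $\overline{K}$. This rests on precisely the content of Remark \ref{deltaBV}, namely the continuity of $\delta_K$ on $\overline{K}$ and the compatibility $\delta_K|_L=\delta_L$, which in turn rely on the analysis already carried out in Lemmas \ref{contimS} and \ref{contiD}.
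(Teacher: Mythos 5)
Your reduction is essentially the paper's: via \eqref{boundaryV} and \eqref{potentialX} you rewrite $\chi_{x}$ in terms of $\chi_{S_{K}}$, obtain $I^{x}(y)=\delta_{K}(x)+\ispa{x-y,\tau_{K}(x)}+J(y)$ with $J$ the Legendre transform of $\log\chi_{S_{K}}$, split $\tau=\xi+u$ along $X_{K}\oplus X_{K}^{\perp}$, treat $y\in K$ by the critical-point computation and $y\notin\overline{K}$ by separation; all of that is correct and parallels the paper's proof.

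The problem is in the step you yourself single out as the main obstacle, namely $y\in\overline{K}\setminus K$. Lower semicontinuity of $I_{K}(y):=\sup_{\xi\in X_{K}}\{\ispa{y,\xi}-\log\chi_{S_{K}}(\xi)\}$ together with the continuity of $\delta_{K}$ on $\overline{K}$ gives only $I_{K}(y)\le\liminf_{K\ni y_{n}\to y}I_{K}(y_{n})=-\delta_{K}(y)$; it says nothing about the reverse inequality $I_{K}(y)\ge-\delta_{K}(y)$, which is the nontrivial half of the claimed identity. (This is exactly how the paper uses lower semicontinuity as well: it yields $F_{K}^{x}(y)\ge I^{x}(y)$, and the bulk of the paper's argument is then devoted to the opposite bound.) So, as written, your justification proves only half of the equality on a proper face $L$ of $\overline{K}$. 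The paper closes the gap by an explicit estimate: choosing $u\in X_{L}^{\perp}$ with $\overline{L}=\{z\in P\,;\,\ispa{z,u}=\lambda(u)\}$ and taking $\tau=\xi-tu$ with $\xi\in X_{L}$, $t\to+\infty$, one gets $\log\chi_{S_{K}}(\xi-tu)=-t\ispa{y,u}+\log\chi_{S_{L}}(\xi)+o(1)$, hence $I_{K}(y)\ge\ispa{y,\xi}-\log\chi_{S_{L}}(\xi)$ for every $\xi\in X_{L}$, and therefore $I_{K}(y)\ge-\delta_{L}(y)=-\delta_{K}(y)$. Alternatively, your route can be repaired without that computation by invoking convexity rather than semicontinuity: $I_{K}$ is a supremum of affine functions of $y$, hence convex, so for $x_{0}\in K$ and $x_{t}=(1-t)x_{0}+ty\in K$ $(0\le t<1)$ the chord inequality gives $tI_{K}(y)\ge I_{K}(x_{t})-(1-t)I_{K}(x_{0})=-\delta_{K}(x_{t})+(1-t)\delta_{K}(x_{0})$, and letting $t\to1^{-}$ and using Remark \ref{deltaBV} yields $I_{K}(y)\ge-\delta_{K}(y)$. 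Either way, an argument beyond lower semicontinuity is needed at this point and must be supplied.
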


\begin{proof}
Let $K$ be a face, and we fix $x \in K$. 
By \eqref{boundaryV}, \eqref{potentialX}, 
we have $\chi_{x}(\tau)=\frac{\chi_{S_{K}}(\tau_{K}(x)+\tau)}{\chi_{S_{K}}(\tau_{K}(x))}$. 
Thus, we have 
\begin{equation}
\label{generalF}
\begin{gathered}
I^{x}(y)=J_{K}(y)+\delta_{K}(x)+\ispa{x-y,\tau_{K}(x)},\\
J_{K}(y)=\sup_{\tau \in \mathbb{R}^{m}}\{\ispa{y,\tau}-\log \chi_{S_{K}}(\tau)\}. 
\end{gathered}
\end{equation}
This holds for any $y \in\mathbb{R}^{m}$. 
For any $y \in \mathbb{R}^{m}$, we set 
\[
I_{K}(y)=\sup_{\xi \in X_{K}}\{\ispa{y,\xi}-\log \chi_{S_{K}}(\xi)\}. 
\]
In \eqref{generalF}, we decompose $\tau=\xi +u$ with $\xi \in X_{K}$, $u \in X_{K}^{\perp}$. 
Then, for $y \in \overline{K}$, we have $\ispa{y,\xi+u}-\log \chi_{x}(\xi+u)=\ispa{y,\xi}-\log \chi_{x}(\xi)$ and hence $I_{K}(y)=J_{K}(y)$ 
for each $y \in \overline{K}$. 
Now, first of all, we assume $y \in K$. Then, for $\xi \in X_{K}$, we have $\nabla_{\xi} \log \chi_{S_{K}}(\xi)=\mu_{K}(\xi)$, 
which shows that $\xi=\tau_{K}(y)$ is a unique critical point of the function $\xi \mapsto \ispa{y,\xi}-\log \chi_{S_{K}}(\xi)$. 
The Hessian of the function $\log \chi_{S_{K}}(\xi)$ is non-negative, and hence we have 
\begin{equation}
\label{indepX}
I_{K}(y)=\ispa{y,\tau_{K}(y)}-\log \chi_{S_{K}}(\tau_{K}(y))=-\delta_{K}(y), 
\end{equation}
which shows \eqref{ConcreteRF} for $y \in K$. 
We set $F_{K}^{x}(y)=\delta_{K}(x)-\delta_{K}(y)+\ispa{x-y,\tau_{K}(x)}$ for $y \in \overline{K}$, 
which is continuous in $y \in \overline{K}$. Since $I^{x}(y)$ is lower semi-continuous in $y$, 
we have $F_{K}^{x}(y) \geq I^{x}(y)$ for any $y \in \overline{K}$. 
Now, assume that $y$ is contained in a face $L$ of $\overline{K}$, 
and we show the inequality $F_{K}^{x}(y) \leq I^{x}(y)$ for such a point $y$, which implies \eqref{ConcreteRF}.  
Since $I_{K}(y)=J_{K}(y)$ for any $y \in \overline{K}$, it suffices to show that $I_{K}(y) \geq -\delta_{K}(y)$. 
Take $u \in X_{L}^{\perp}$ such that $\overline{L}=\{z \in P\,;\,\ispa{z,u}=\lambda (u)\}$ with 
$\lambda (u)=\min_{z \in P}\ispa{z,u}$. Then, for any $\xi \in X_{L}$ and $t>0$, we have 
\[
\begin{gathered}
\chi_{S_{K}}(\xi-tu)=e^{-t\lambda(u)}(\chi_{S_{L}}(\xi)+R_{L}(t,\xi,u)), \\
R_{L}(t,\xi,u)=\sum_{\alpha \in S_{K} \setminus S_{L}}
c(\alpha)e^{\ispa{\alpha,\xi}-t(\ispa{\alpha,u}-\lambda (u))}. 
\end{gathered}
\]
Note that $R_{K}(t,\xi,u) \to 0$ as $t \to +\infty$. 
Since $y \in L$, we have $\lambda (u)=\ispa{y,u}$, and hence 
\[
\log \chi_{S_{K}}(\xi-tu) =-t\ispa{y,u}+\log \chi_{S_{L}}(\xi)+\log (1+o_{t}(1)), 
\]
where $o_{t}(1)$ denotes a term tending to zero as $t \to +\infty$. Since $I_{K}(y)=J_{K}(y) \geq \ispa{y,\xi-tu}-\log \chi_{S_{K}}(\xi-tu)$ 
for any $\xi \in X_{L}$ and $t>0$, 
we obtain 
\[
I_{K}(y) \geq \ispa{y,\xi}-\log \chi_{S_{L}}(\xi),\quad \xi \in X_{L}. 
\]
Since the supremum over $\xi \in X_{L}$ of the right hand side above is $-\delta_{L}(y)=-\delta_{K}(y)$, 
we conclude $I_{K}(y) \geq -\delta_{K}(y)$ for $y \in L$. 
Finally, we show that $I^{x}(y)=+\infty$ when $y \not \in \overline{K}$. 
We have 
\[
\ispa{y,\tau}-\log \chi_{S_{K}}(\tau)=-\log\chi_{S_{K}}(0)+\int_{0}^{1}\ispa{y-\mu_{K}(t\tau),\tau}\,dt. 
\]
Note, in the above, that $\mu_{K}(t\tau) \in K$ for any $t \in [0,1]$ and $\tau \in \mathbb{R}^{m}$. 
Since $y \not \in \overline{K}$, we can take $u \in \mathbb{R}^{m}$ such that $\ispa{y-z,u}>0$ for any $z \in \overline{K}$. 
Then, by setting $\tau=ru$ with $r >0$ in the above, we conclude that $J_{K}(y)=+\infty$ and hence $I^{x}(y)=+\infty$. 
\end{proof}

\begin{remark}
\label{remarkLDP}
{\rm When the polytope $P$ is a lattice polytope satisfying Delzant condition, 
the formula for the rate function \eqref{ConcreteRF} 
coincides with the formula given in \cite{SoZ2}, Proposition 5.2 
for the rate function in a large deviations principle of the Bergman-Bernstein measure (see Section \ref{ToricGeometry}) 
defined by the Fubini-Study metric on the toric manifold obtained through a monomial embedding. 
Indeed, for example, when $x \in P^{o}$, the rate function $I^{x}(y)$ is given by 
\[
\begin{split}
I^{x}(y)&=\delta_{S,c}(x)-\delta_{S,c}(y)+\ispa{x-y,\tau_{S,c}(x)}\\
&=-\delta_{S,c}(y)+\log \chi_{S,c}(\tau_{S,c}(x))-\ispa{y,\tau_{S,c}(x)}. 
\end{split}
\]
Then, the function $\log \chi_{S,c}$ is a K\"{a}hler potential for the Fubini-Study metric and $-\delta_{S,c}$ 
is its Legendre dual. See \cite{Z}, \cite{SoZ2} and the following section for details. }
\end{remark}

\section{Bergman-Bernstein approximations}
\label{ToricGeometry}

In \cite{Z}, Zelditch introduced the notion of the Bergman-Bernstein approximations for 
functions on Delzant polytopes $P$ (a lattice polytope with the property that 
each vertex of $P$ has exactly $m=\dim P$ edges and $m$ lattice vectors incident from the 
vertex along the edges form a $\mathbb{Z}$-basis of the lattice). 
We explain here this notion for projective toric manifolds obtained by monomial embeddings 
and the difference between Bergman-Bernstein approximations and Bernstein measures defined in this paper. 

Throughout this section, we assume that the polytope $P$ is Delzant with vertices in $\mathbb{Z}^{m}$. 
We set $S=P \cap \mathbb{Z}^{m}$ and $S_{N}=NP \cap \mathbb{Z}^{m}$. 
We fix a function $c:S \to \mathbb{R}_{>0}$. 
Let $\Phi_{S,c}:(\mathbb{C}^{*})^{m} \to \mathbb{C}P^{|S|-1}$ be the monomial embedding 
defined by 
\[
\Phi_{S,c}(z)=[c(\alpha)^{1/2}z^{\alpha}]_{\alpha \in S},\quad z \in (\mathbb{C}^{*})^{m}. 
\]
Then, the toric variety $M_{S,c}$ is defined by the Zariski closure of the image of the 
monomial embedding $\Phi_{S,c}:(\mathbb{C}^{*})^{m} \to \mathbb{C}P^{|S|-1}$ (\cite{GKZ}). 
Note that, in general, $M_{S,c}$ may have singularities. 
However, it is well-known (\cite{GKZ}) that, if $P$ satisfies the Delzant condition, then $M_{S,c}$ is 
a non-singular compact K\"{a}hler manifold. 
Consider the (restriction of the) Fubini-Study K\"{a}hler form $\omega_{{\rm FS}}$ on $M_{S,c}$. 
Then, the action of the real torus $T^{m} \subset (\mathbb{C}^{*})^{m}$ is Hamiltonian with respect 
to the symplectic form $\omega_{{\rm FS}}$. 
On the open orbit $(\mathbb{C}^{*})^{m} \cong \Phi_{S,c}((\mathbb{C}^{*})^{m})$, 
one can take a $T^{m}$-invariant K\"{a}hler potential $\varphi$. 
Since $\varphi$ is a function on $(\mathbb{C}^{*})^{m}$ invariant under $T^{m}$-action, 
it defines a function on $\mathbb{R}^{m}$, which we denote by $\varphi_{S,c}$. 
Let $\mu:M_{S,c} \to P$ be the moment map of the Hamiltonian action of $T^{m}$ on $(M_{S,c},\omega_{{\rm FS}})$. 
Since $\mu$ is also $T^{m}$-invariant, it defines a map $\mu_{S,c}:\mathbb{R}^{m} \to P$, 
which is a diffeomorphism onto the interior, $P^{o}$, of $P$. 

The symplectic potential associated to $\varphi$ is the Legendre dual $u_{\varphi}$ 
of the function $\varphi_{S,c}$ associated to the K\"{a}hler potential $\varphi$, which is defined by 
\[
u_{\varphi}(x)=\ispa{x,\tau_{S,c}(x)}-\varphi_{S,c} (\tau_{S,c}(x)),\quad x \in P^{o}, 
\]
where $\tau_{S,c}:P^{o} \to \mathbb{R}^{m}$ is the inverse of $\mu_{S,c}$. 
The Bergman-Bernstein approximation $\nu_{N}(f)$ (the notation $B_{h^{N}}(f)$ is used 
in \cite{Z}, where $h$ denotes a Hermitian metric on the hypersection bundle $L_{S,c}:=\mathcal{O}(1)|_{M_{S,c}}$ 
over $M_{S,c}$ whose curvature is $\omega_{{\rm FS}}$) of a function $f$ on $P$ is defined by 
\begin{equation}
\label{BBapprox}
\nu_{N}(f)(x)=\frac{1}{\Pi_{N}(z,z)}
\sum_{\gamma \in S_{N}}f(\gamma/N)
\frac{e^{N(u_{\varphi}(x)+\ispa{\gamma/N -x,\tau_{S,c}(x)})}}{Q_{h^{N}}(\gamma)}, 
\end{equation}
where $Q_{h^{N}}(\gamma)$ is the squared $L^{2}$-norm of the monomial with weight $\gamma$, which
is regarded as an element of $H^{0}(M_{S,c},L_{S,c}^{\otimes N})$, 
the function $\Pi_{N}(z,z)$ is the Bergman-Szeg\"{o} kernel for $H^{0}(M_{S,c},L_{S,c}^{\otimes N})$, 
and $z \in (\mathbb{C}^{*})^{m}$ satisfies $\mu(z)=x$. 

To compare the Bergman-Bernstein approximation \eqref{BBapprox} with our Bernstein measures, 
we take the K\"{a}hler potential 
\[
\varphi (z)=\log \sum_{\alpha \in S}c(\alpha)|z^{\alpha}|^{2},\quad z \in (\mathbb{C}^{*})^{m},  
\]
of $\omega_{{\rm FS}}$ on $(\mathbb{C}^{*})^{m}$. 
Then, the corresponding function $\varphi_{S,c}$ coincides with the function $\chi_{S,c}$ on $\mathbb{R}^{m}$ 
defined in \eqref{potential}. 
Therefore, the symplectic potential $u_{\varphi}$ coincides with the function $-\delta_{S,c}$ 
defined in \eqref{LegendreD}.  The quantity $Q_{h^{N}}(\gamma)$ is, as in \cite{Z}, 
given by 
\[
Q_{h^{N}}(\gamma)=\int e^{-N\delta_{S,c}(x)+\ispa{\gamma -Nx,\tau_{S,c}(x)}}\,dx.
\]
Note that the restriction of the moment map $\mu$ to the open orbit is given by 
\[
\mu (z)=\sum_{\alpha \in S}\frac{c(\alpha)|z^{\alpha}|^{2}}
{\sum_{\beta \in S}c(\beta)|z^{\beta}|^{2}}\alpha, \quad 
z \in (\mathbb{C}^{*})^{m}. 
\]
Thus, the map $\mu_{S,c}:\mathbb{R}^{m} \to P^{o}$ induced by the moment map $\mu$ 
is nothing but the map defined in \eqref{momentM}. 

\begin{lemma}
Let $m_{S,N}^{\gamma}(x)$ be the function on $P$ defined by \eqref{coeff} $($with $m_{\alpha}$ 
replaced by $m_{S,c,\alpha}$ defined in \eqref{Scfunct}$)$.  
Then, the Bergman-Bernstein approximation \eqref{BBapprox} is written as 
\begin{equation}
\label{BB}
\nu_{N}(f)(x)=\frac{1}{\Pi_{N}(x)}\sum_{\gamma \in S_{N}}
f(\gamma/N)\frac{m_{S,N}^{\gamma}(x)}{R_{N}(\gamma)}, 
\end{equation}
where the quantity $R_{N}(\gamma)$ and the function $\Pi_{N}(x)$ are given by 
\begin{equation}
\label{kernel}
\begin{gathered}
R_{N}(\gamma)=\int_{P} m_{S,N}^{\gamma}(x)\,dx,\\
\Pi_{N}(x)=\sum_{\gamma \in S_{N}}\frac{m_{S,N}^{\gamma}(x)}{R_{N}(\gamma)}, \quad x \in P. 
\end{gathered}
\end{equation}
\end{lemma}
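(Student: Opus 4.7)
The plan is to reduce \eqref{BBapprox} to \eqref{BB} by direct algebraic manipulation, exploiting the explicit formula \eqref{repofmSc} for $m_{S,c,\alpha}$ together with the identification $u_{\varphi}=-\delta_{S,c}$ made in the text just before the lemma. The key observation is that the exponential factor appearing in each Bergman-Bernstein summand and the building block $m_{S,c,\alpha}(x)$ share essentially the same exponent, so raising \eqref{repofmSc} to the $N$-th power should expose $m_{S,N}^{\gamma}(x)$ inside \eqref{BBapprox} up to a combinatorial constant that will cancel against $Q_{h^{N}}(\gamma)$.

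First, for any $\beta_{1},\dots,\beta_{N}\in S$ with $\beta_{1}+\cdots+\beta_{N}=\gamma$, multiplying \eqref{repofmSc} across the $N$ factors gives
\[
m_{S,c,\beta_{1}}(x)\cdots m_{S,c,\beta_{N}}(x)=\Bigl(\prod_{j=1}^{N}c(\beta_{j})\Bigr)E_{N}^{\gamma}(x),
\]
where $E_{N}^{\gamma}(x):=\exp\bigl(-N\delta_{S,c}(x)+\ispa{\gamma-Nx,\tau_{S,c}(x)}\bigr)$ depends on $\gamma$ but not on the composition. Summing over compositions and setting $C_{N}(\gamma):=\sum_{\beta_{1}+\cdots+\beta_{N}=\gamma}\prod_{j}c(\beta_{j})$, the definition \eqref{coeff} yields $m_{S,N}^{\gamma}(x)=C_{N}(\gamma)E_{N}^{\gamma}(x)$. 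Since $u_{\varphi}=-\delta_{S,c}$, the exponential in \eqref{BBapprox} is precisely $E_{N}^{\gamma}(x)$, and the displayed formula for $Q_{h^{N}}(\gamma)$ from the text immediately above the lemma reads $Q_{h^{N}}(\gamma)=\int_{P}E_{N}^{\gamma}(x)\,dx=C_{N}(\gamma)^{-1}R_{N}(\gamma)$. Thus the combinatorial factor $C_{N}(\gamma)$ cancels, and
\[
\frac{E_{N}^{\gamma}(x)}{Q_{h^{N}}(\gamma)}=\frac{m_{S,N}^{\gamma}(x)}{R_{N}(\gamma)},
\]
which is exactly the summand of \eqref{BB}.

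It remains to identify the normalising factor $\Pi_{N}(z,z)|_{\mu(z)=x}$ with $\Pi_{N}(x)$. Expanding the diagonal Bergman-Szeg\"o kernel in the $L^{2}$-orthogonal basis of monomial sections of $L_{S,c}^{\otimes N}$, and reading the pointwise and $L^{2}$-norms back through the monomial embedding $\Phi_{S,c}$ using the Legendre duality $\chi_{S,c}\leftrightarrow\delta_{S,c}$, one obtains $\Pi_{N}(z,z)=\sum_{\gamma\in S_{N}}E_{N}^{\gamma}(x)/Q_{h^{N}}(\gamma)$ at any $z$ with $\mu(z)=x$; the same cancellation then yields $\Pi_{N}(z,z)=\Pi_{N}(x)$, completing \eqref{BB}. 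The only step requiring non-trivial input, and the main place where the toric K\"ahler dictionary is invoked, is precisely this last identification: one must check that with the potential $\varphi(z)=\log\sum_{\alpha}c(\alpha)|z^{\alpha}|^{2}$ fixed in the text, the pointwise $h^{N}$-norm squared of the monomial section $z^{\gamma}$ equals $E_{N}^{\gamma}(\mu(z))$ on the open orbit, and that the corresponding $L^{2}$-norm squared is exactly the integral defining $Q_{h^{N}}(\gamma)$ with no further Jacobian. This is standard in the theory of toric Bergman kernels; once it is in place, everything else is the bookkeeping of $C_{N}(\gamma)$ already carried out above.
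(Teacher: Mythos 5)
Your proposal is correct and follows essentially the same route as the paper: factor $m_{S,N}^{\gamma}(x)$ via \eqref{repofmSc} into a weighted lattice-path count (your $C_{N}(\gamma)$, the paper's $\mathcal{P}_{N}(\gamma)$) times the exponential $e^{-N\delta_{S,c}(x)+\ispa{\gamma-Nx,\tau_{S,c}(x)}}$, deduce $R_{N}(\gamma)=\mathcal{P}_{N}(\gamma)Q_{h^{N}}(\gamma)$ so the combinatorial factor cancels, and identify the diagonal Bergman--Szeg\"o kernel with $\Pi_{N}(x)$. The only cosmetic difference is that you spell out the monomial-basis expansion behind the identity \eqref{BSk}, which the paper simply asserts.
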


\begin{proof}
First, we note that the function $m_{S,c,\alpha}(x)$ is written as \eqref{repofmSc}. 
Thus, the functions $m_{S,N}^{\gamma}(x)$ defined in \eqref{coeff} is written as 
\[
m_{S,N}^{\gamma}(x)=\mathcal{P}_{N}(\gamma)e^{-N\delta_{S,c}(x)+\ispa{\gamma-Nx,\tau_{S,c}(x)}}, 
\]
where $\mathcal{P}_{N}(\gamma)=\mathcal{P}_{S,c,N}(\gamma)$ is the weighted number of lattice path, 
\[
\mathcal{P}_{N}(\gamma):=\sum_{\beta_{1},\ldots,\beta_{N} \in S\,;\,\beta_{1}+\cdots +\beta_{N}=\gamma}c(\beta_{1})\cdots c(\beta_{N}). 
\]
From this, we have 
\begin{equation}
\label{RQP}
R_{N}(\gamma)=Q_{h^{N}}(\gamma)\mathcal{P}_{N}(\gamma), 
\end{equation} 
which shows that 
\[
\frac{e^{N(-\delta_{S,c}(x)+\ispa{\gamma/N-x,\tau_{S,c}(x)})}}
{Q_{h^{N}}(\gamma)}
=\frac{m_{N}^{\gamma}(x)}{R_{N}(\gamma)}. 
\]
Note that the Bergman-Szeg\"{o} kernel $\Pi_{N}(z,z)$ with $z \in (\mathbb{C}^{*})^{m}$ is written as 
\begin{equation}
\label{BSk}
\Pi_{N}(z,z)=\sum_{\gamma \in S_{N}}
\frac{e^{-N\delta_{S,c}(x)+\ispa{\gamma -Nx,\tau_{S,c}(x)}}}
{Q_{h^{N}}(\gamma)}=\Pi_{N}(x), 
\end{equation}
where $x=\mu (z)$, and hence we have the assertion.  
\end{proof}

Therefore, it would be natural to call the probability measure 
\begin{equation}
\label{BBmeasure}
d\nu_{N}^{x}:=\frac{1}{\Pi_{N}(x)}\sum_{\gamma \in S_{N}}
\frac{m_{S,N}^{\gamma}(x)}{R_{N}(\gamma)}\delta_{\gamma/N}
\end{equation}
the Bergman-Bernstein measure. (The measure $d\nu_{N}^{x}$ defined above 
equals the measure $\mu_{N}^{z}$ with $\mu(z)=x$ in \cite{Z}.) 
Note that the Bergman-Bernstein measure $d\nu_{N}^{x}$ is not a section of 
the barycenter map $b:\mathcal{M}(P) \to P$. In fact, by Lemma \ref{diffBNf} and Theorem \ref{char}, 
it is easy to show the following formula: 
\begin{equation}
\label{BBbary}
\frac{1}{N}D \log \Pi_{N}(x)=b(d\nu_{N}^{x})-x, 
\end{equation}
where, for $f \in C^{\infty}(P)$ and $x \in P$, 
we set $Df(x):=A_{S,c}(x)\nabla f (x)$ with the matrix $A_{S,c}(x)$ defined in \eqref{ASc}, 
and $b(d\nu_{N}^{x})$ denotes the barycenter of the Bergman-Bernstein measure $d\nu_{N}^{x}$. 
Then, it is natural to ask when the Bergman-Bernstein measure $d\nu_{N}^{x}$ coincides 
with the dilated convolution power $d\mathcal{B}_{x}^{N}$ of the 
Bernstein measure $\mathcal{B}_{S,c}(x)$ defined by \eqref{fB}. 
For this question, we have the following proposition. 

\begin{proposition}
\label{BBversusB}
Let $d\nu_{N}^{x}$ be the Bergman-Bernstein measure defined by \eqref{BBmeasure}, 
and let $d\mathcal{B}_{x}^{N}$ denote the dilated convolution power \eqref{convolution} 
of the finitely supported Bernstein measure $\mathcal{B}_{S,c}(x)$ defined in \eqref{fB}. 
Then, the following four conditions are equivalent. 
\begin{enumerate}
\item $R_{N}(\gamma)$ is constant as a function on the finite set $S_{N}$. 
\item The function $\Pi_{N}(x)$ defined in \eqref{kernel} is constant. 
\item The barycenter of $d\nu_{N}^{x}$ is $x$ for each $x \in P$.
\item $d\nu_{N}^{x}=d\mathcal{B}_{x}^{N}$ for each $x \in P$. 
\end{enumerate}
\end{proposition}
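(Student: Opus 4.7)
The plan is to prove the four conditions equivalent by running the cycle $(1) \Rightarrow (4) \Rightarrow (3) \Rightarrow (2) \Rightarrow (1)$, since three of these implications are essentially formal and only $(2) \Rightarrow (1)$ requires a substantive argument.

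First I would dispatch $(1) \Rightarrow (4)$ by direct substitution: if $R_N(\gamma) \equiv r$ on $S_N$, then pulling $1/r$ out of the sum defining $\Pi_N(x)$ and using $\sum_{\gamma \in S_N} m_{S,N}^\gamma(x) = 1$ (which follows from the fact that $d\mathcal{B}_x^N$ is a probability measure) gives $\Pi_N(x) \equiv 1/r$; plugging both formulas into \eqref{BBmeasure} collapses $d\nu_N^x$ to $\sum_\gamma m_{S,N}^\gamma(x)\delta_{\gamma/N} = d\mathcal{B}_x^N$. The implication $(4) \Rightarrow (3)$ is immediate since $d\mathcal{B}_x^N$ has barycenter $x$ (a straightforward consequence of the convolution description \eqref{remark4} together with $b(\mathcal{B}_{S,c}(x)) = x$). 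For $(3) \Rightarrow (2)$, I would feed $b(d\nu_N^x) = x$ into the identity \eqref{BBbary} to obtain $D\log \Pi_N(x) = 0$ on $P^o$; since $A_{S,c}(x)$ is positive definite on $P^o$, this forces $\nabla \log \Pi_N \equiv 0$ on $P^o$, so $\Pi_N$ is constant there and then constant on $P$ by continuity.

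The key step is $(2) \Rightarrow (1)$. Suppose $\Pi_N(x) = c$ on $P^o$. Combining the definition $\sum_\gamma m_{S,N}^\gamma(x)/R_N(\gamma) = c$ with $c\sum_\gamma m_{S,N}^\gamma(x) = c$ gives the linear relation
\[
\sum_{\gamma \in S_N}\Bigl(\frac{1}{R_N(\gamma)} - c\Bigr) m_{S,N}^\gamma(x) = 0, \qquad x \in P^o.
\]
The plan now is to deduce that each coefficient vanishes, giving $R_N(\gamma) \equiv 1/c$. This requires the lemma that the family $\{m_{S,N}^\gamma\}_{\gamma \in S_N}$ is linearly independent on $P^o$. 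I would prove this by writing
\[
m_{S,N}^\gamma(x) = \mathcal{P}_N(\gamma)\, e^{-N\delta_{S,c}(x) + \langle \gamma - Nx,\,\tau_{S,c}(x)\rangle}
\]
from \eqref{repofmSc} and \eqref{coeff}, peeling off the $\gamma$-independent factor $e^{-N\delta_{S,c}(x) - N\langle x,\tau_{S,c}(x)\rangle}$, and then using that $\tau_{S,c}: P^o \to \mathbb{R}^m$ is a diffeomorphism to reduce the identity $\sum_\gamma a_\gamma \mathcal{P}_N(\gamma) e^{\langle \gamma, \tau_{S,c}(x)\rangle} = 0$ to the statement that the exponential characters $\{e^{\langle \gamma, \tau\rangle}\}_{\gamma \in S_N}$ are linearly independent on $\mathbb{R}^m$. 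The positivity $\mathcal{P}_N(\gamma) > 0$ on $S_N$ then yields $a_\gamma = 0$.

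The main obstacle is this linear independence step, because everything else is either a direct substitution or a rewriting of \eqref{BBbary}. Once linear independence is in hand, $(2) \Rightarrow (1)$ is immediate, closing the cycle.
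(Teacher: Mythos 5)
Your proposal is correct, and it closes the cycle in a genuinely different way from the paper at the one substantive step. The paper proves $(2)\Leftrightarrow(3)$ from \eqref{BBbary}, $(1)\Rightarrow(2)$ and $(4)\Rightarrow(1)$ by inspection, and then does the real work in $(2)\Rightarrow(4)$: under constancy of $\Pi_N$ the coefficients $\tilde m_{S,N}^{\gamma}$ satisfy the same log-derivative identity \eqref{same} as $m_{S,N}^{\gamma}$ (Lemma \ref{diffBNf}), so the operator $T_N$ attached to $d\nu_N^x$ obeys $D_jT_N=N[T_N,X_j]$, and the commuting operators $L_{N,j}=D_j+NX_j$ give $T_N(\chi_\alpha)=N^{-\|\alpha\|}L_N^{\alpha}\cdot 1$, an expression depending only on $A_{S,c}$; comparing with $B_N$ on monomials and using density yields $T_N=B_N$, exactly as in Proposition \ref{uniqueness}. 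You instead put the real work into $(2)\Rightarrow(1)$, using the explicit exponential representation $m_{S,N}^{\gamma}(x)=\mathcal{P}_N(\gamma)e^{-N\delta_{S,c}(x)+\ispa{\gamma-Nx,\tau_{S,c}(x)}}$ coming from \eqref{repofmSc} and \eqref{coeff}, the surjectivity of $\tau_{S,c}:P^o\to\mathbb{R}^m$, the linear independence of the characters $e^{\ispa{\gamma,\tau}}$ for distinct $\gamma\in S_N$, and the positivity $\mathcal{P}_N(\gamma)>0$ on $S_N$; your remaining implications $(1)\Rightarrow(4)\Rightarrow(3)\Rightarrow(2)$ are the same routine substitutions (with $(3)\Rightarrow(2)$ using nondegeneracy of $A_{S,c}$ on $P^o$ plus continuity, which is what the paper's ``obvious from \eqref{BBbary}'' amounts to). Your route is more elementary and self-contained for the finitely supported case, avoids the operator calculus and the density-of-polynomials step, and even identifies the constant ($R_N(\gamma)\equiv 1/\Pi_N$); the paper's route is more structural, reusing the machinery already set up for the uniqueness result and needing only the differential identity rather than the explicit exponential formula. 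Both are complete proofs.
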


\begin{remark}
\label{balanced}
{\rm By \eqref{BSk}, we know that the function $\Pi_{N}(x)$ on $P$ is the function 
induced by the restriction of the Bergman-Szeg\"{o} kernel to the diagonal. 
Hence, the condition (2) in Proposition \ref{BBversusB} is equivalent to 
that the Fubini-Study Hermitian metric $h^{N}$ on $L_{S,c}^{\otimes N}$ is a balanced metric (\cite{D}). }
\end{remark}

\begin{proof}
According to the formula \eqref{BBbary}, it is obvious that the conditions (2) and (3) are equivalent. 
Assume that the condition (1) holds. Since $d\mathcal{B}_{x}^{N}=\sum_{\gamma \in S_{N}}m_{S,N}^{\gamma}(x)\delta_{\gamma/N}$ is 
a probability measure, $\Pi_{N}$ is constant because of its definition \eqref{kernel}, which shows (2).  
Next, assume that the condition (4) holds. Then, we have $\Pi_{N}(x)R_{N}(\gamma)=1$ for each $\gamma \in S_{N}$ and $x \in P$. 
Since $\Pi_{N}$ does not depend on $\gamma$, the condition (1) holds. 
Finally, assume that the condition (2) holds. 
We set $\tilde{m}_{S,N}^{\gamma}(x)=\frac{m_{S,N}^{\gamma}(x)}{\Pi_{N}R_{N}(\gamma)}$ 
so that $d\nu_{N}^{x}=\sum_{\gamma \in S_{N}}\tilde{m}_{S,N}^{\gamma}(x)\delta_{\gamma/N}$. 
Then, by Lemma \ref{diffBNf}, we have 
\begin{equation}
\label{same}
\nabla \tilde{m}_{S,N}^{\gamma}(x) =\tilde{m}_{S,N}^{\gamma}(x) K(x)(\gamma -Nx),  
\end{equation}
where $K(x)=A_{S,c}(x)^{-1}$ is the defining matrix of the Bernstein measure $\mathcal{B}_{S,c}:P \to \mathcal{M}(P)$. 
Let $T_{N}:C(P) \to C(P)$ be the linear map defined by $d\nu_{N}^{x}$, that is, 
\[
T_{N}(f)(x):=\sum_{\gamma \in S_{N}}\tilde{m}_{S,N}^{\gamma}(x)f(\gamma/N),\quad f \in C(P). 
\]
Then, by \eqref{same}, we have 
\[
D_{j}T_{N}(f)=NT_{N}(X_{j}f)-NX_{j}T_{N}(f),\quad f \in C(P), 
\]
where $X_{j}$ is the multiplication operator, $(X_{j}f)(x)=x_{j}f(x)$, and $D_{j}f(x)$ is 
the $j$-th component of $A_{S,c}(x)\nabla f (x)$. 
As in the proof of Proposition \ref{uniqueness}, 
define the first-order differential operators $L_{N,j}$ ($j=1,\ldots,m$) 
by $L_{N,j}=D_{j}+NX_{j}$. Since $[L_{N,j},L_{N,k}]=0$ for each $j,k$, 
we write $L_{N}^{\alpha}=L_{N,1}^{\alpha_{1}}\cdots L_{N,m}^{\alpha_{m}}$ 
for $\alpha=(\alpha_{1},\ldots,\alpha_{m}) \in \mathbb{Z}^{m}_{\geq 0}$. 
Then, as in the proof of Proposition \ref{uniqueness}, we have 
\[
L_{N}^{\alpha}T_{N}(\chi_{\beta})=N^{\|\alpha\|}T_{N}(\chi_{\alpha+\beta}),\ 
\mbox{and hence}\ T_{N}(\chi_{\alpha})=\frac{1}{N^{\|\alpha\|}}L_{N}^{\alpha}\cdot 1,  
\]
where $\chi_{\alpha}$ denotes the monomial with weight $\alpha$. 
Note that the last expression in the above depends only on the matrix $A_{S,c}(x)$, 
and the same formula holds for the Bernstein approximation $B_{N}(\chi_{\alpha})$ instead of $T_{N}(\chi_{\alpha})$.  
Therefore, we have $T_{N}(f)=B_{N}(f)$ for any $f \in C(P)$, which shows the condition (4). 
\end{proof}

One of advantages of using the Bergman-Bernstein approximation $\nu_{N}(f)$ is that it satisfies the identity 
\begin{equation}
\label{ToRiemann}
\int_{P}\Pi_{N}(x)\nu_{N}(f)(x)\,dx=\sum_{\gamma \in NP \cap \mathbb{Z}^{m}}
f(\gamma/N). 
\end{equation}
Then, once one find an asymptotic expansion of $\nu_{N}(f)$ as $N \to \infty$, one would have 
an asymptotic expansion of the Riemann sum \eqref{Riemann} by using a well-known asymptotic 
behavior of the Bergman-Szeg\"{o} kernel function $\Pi_{N}(x)$. This is the idea used in \cite{Z}. 
(More precisely, Zelditch obtains an asymptotic expansion of the numerator in \eqref{BB}.) 
For the Bernstein approximation $B_{N}(f)$ introduced in this paper, we can not, in general, 
expect that the identity like \eqref{ToRiemann} holds for $B_{N}(f)$. 
Instead, our Bernstein measures can be used for general polytopes. 

It might be possible to find asymptotic behavior of the Bergman-Bernstein measure defined by \eqref{BB} 
even for general polytope $P$. However, for this, one might need to 
analyze in detail the behavior of the functions $m_{S,N}^{\gamma}(x)$ 
when $\gamma/N$ and $x$ are close to the boundary of the polytope.

\bibliographystyle{amsalpha}

\end{document}